\theoremstyle{plain}
\newtheorem{thm}{Theorem}
\newtheorem{lem}[thm]{Lemma}
\newtheorem{cor}[thm]{Corollary}
\newtheorem{prop}[thm]{Proposition}
\theoremstyle{definition}
\newtheorem{rmk}[thm]{Remark}
\numberwithin{thm}{section}
\numberwithin{equation}{thm}
\newcommand{\rank}{{\rm rank}}
\newcommand{\soc}{{\rm soc}}
\newcommand{\Char}{{\rm char}}
\newcommand{\Tr}{{\rm Tr}}
\newcommand{\Gal}{{\rm Gal}}
\newcommand{\Trace}{{\rm Trace}}
\newcommand{\Stab}{{\rm Stab}}
\newcommand{\Ker}{{\rm Ker}}
\newcommand{\sym}{\mathsf{S}}
\newcommand{\alt}{\mathsf{A}}
\newcommand{\Aut}{\mathrm{Aut}}
\newcommand{\Irr}{\mathrm{Irr}}
\newcommand{\eps}{\epsilon}
\newcommand{\diag}{\mathrm{diag}}
\newcommand{\inv}{\mathsf{inv}}
\newcommand{\SL}{\mathrm{SL}}
\newcommand{\PSL}{\mathrm{PSL}}
\newcommand{\GL}{\mathrm{GL}}
\newcommand{\PGL}{\mathrm{PGL}}
\newcommand{\AGL}{\mathrm{AGL}}
\newcommand{\Sp}{\mathrm{Sp}}
\newcommand{\sA}{{\mathcal A}}
\newcommand{\sB}{{\mathcal B}}
\newcommand{\sF}{{\mathcal F}}
\newcommand{\sH}{{\mathcal H}}
\newcommand{\sK}{{\mathcal K}}
\newcommand{\sL}{{\mathcal L}}
\newcommand{\sV}{{\mathcal V}}
\newcommand{\sW}{{\mathcal W}}
\newcommand{\sX}{{\mathcal X}}
\newcommand{\A}{{\mathbb A}}
\newcommand{\C}{{\mathbb C}}
{

\newcommand{\F}{{\mathbb F}}
\newcommand{\G}{{\mathbb G}}

\renewcommand{\P}{{\mathbb P}}
\newcommand{\Q}{{\mathbb Q}}

\newcommand{\Z}{{\mathbb Z}}

\newcommand{\ZB}{{\mathbf Z}}
\newcommand{\CB}{{\mathbf C}}
\newcommand{\NB}{{\mathbf N}}
\newcommand{\OB}{{\mathbf O}}

\newcommand{\bj}{\mathbf{j}}
\newcommand{\St}{\mathsf {St}}
\newcommand{\td}{\tilde{\delta}}

\newcommand{\triv}{{\mathds{1}}}
\newcommand{\geom}{{\rm geom}}
\newcommand{\arith}{{\rm arith}}
\newcommand{\chs}{\,{\mathbf {char}}\,}

\renewcommand{\mod}{\bmod \,}
\renewcommand{\Char}{{\mathsf {Char}}}

\begin{document}
\title{Rigid local systems and finite general linear groups}
\author{Nicholas M. Katz and Pham Huu Tiep}
\address{Department of Mathematics, Princeton University, Princeton, NJ 08544}
\email{nmk@math.princeton.edu}
\address{Department of Mathematics, Rutgers University, Piscataway, NJ 08854}
\email{tiep@math.rutgers.edu}

\subjclass[2010]{11T23, 20C33, 20G40}
\keywords{Rigid local systems, Monodromy groups, Weil representations, Finite general linear groups}  

\thanks{The second author gratefully acknowledges the support of the NSF (grant DMS-1840702), and the Joshua 
Barlaz Chair in Mathematics.}

\maketitle

\begin{abstract}We use hypergeometric sheaves on $\G_m/\F_q$, which are particular sorts of rigid local systems, to construct explicit local systems whose arithmetic
and geometric monodromy groups are the finite general linear groups 
$\GL_n(q)$ for any $n \ge 2$ and and any prime power $q$, so long as $q > 3$ when $n=2$.
This paper continues a program of finding simple (in the sense of simple to remember) families of exponential sums whose monodromy groups are certain finite groups of Lie type, cf. \cite{Gross},
\cite{KT1}, \cite{KT2}, \cite{KT3} for (certain) finite symplectic and unitary groups, or certain sporadic groups, cf. \cite{KRL}, \cite{KRLT1}, \cite{KRLT2}, \cite{KRLT3}. The novelty of this paper is obtaining $\GL_n(q)$ in this hypergeometric way.
A pullback construction then yields local systems on $\A^1/\F_q$ whose geometric monodromy groups are $\SL_n(q)$. These turn out to recover a construction of Abhyankar.
\end{abstract}

\tableofcontents

\section*{Introduction}
For any integer $n \geq 2$ and any prime power $q$, the finite general linear group $\GL_n(q)$ has a (reducible) 
{\it total Weil representation}, which has a very simple description. It is the action by composition of
$\GL_n(q)$ on the space $W$ of $\C$-valued, or for us $\overline{\Q_\ell}$-valued, functions on the $n$-dimensional $\F_q$ vector space $V:=\F_q^n$. This is a representation of dimension $q^n$. We can split off the delta function $\delta_0$ at $0$, and we are left with the space $W^\star$ of functions on the nonzero vectors $V^\star:=V \setminus \{0\}$. On the set $V^\star$, the group $\F_q^\times$ of invertible scalars acts by homothety, and so the action of $W^\star$ on $V^\star$ breaks in its $q-1$ eigenspaces under the $\F_q^\times$ action.
Thus we have
$$W = \C\delta_0 \oplus \bigoplus_{\chi \in \Irr(\F_q^\times)}W_\chi.$$
 Inside the space $W_\triv$ of $\F_q^\times$-invariant functions (i.e. the space of radial functions) we have the one-dimensional space 
$\C \cdot 1_{V^\star}$ of constant functions, so we have a decomposition
 $$W \cong \C \delta_0 \oplus \C \cdot 1_{V^\star} \oplus (W_\triv/\C \cdot 1_{V^\star})\ \oplus \bigoplus_{\chi \neq \triv}W_\chi.$$
 It is easy to see that each $W_\chi$ has dimension $(q^n-1)/(q-1)$, the number of points in the projective space $\P^{n-1}(\F_q)$.
 One knows \cite[Prop. 4.2 (b)]{Ger} that $W_\triv/\C \cdot 1_{V^*}$, and each $W_\chi$ with nontrivial $\chi$, is an irreducible representation of $\GL_n(q)$, called an  {\it irreducible Weil representation} of $\GL_n(q)$. This numerology leads us to search for hypergeometric sheaves of these ranks, indexed by these same $\chi$, for which we can prove first, that they each have finite monodromy, cf. Theorem \ref{individualfinite}, and then that the monodromy of their direct sum,
together with two copies of the trivial representation, is indeed $\GL_n(q)$ in its total Weil representation, cf. 
Theorem \ref{main-sl2}. The individual (irreducible)
hypergeometric sheaves have the images of $\GL_n(q)$ in an irreducible Weil representation as their geometric and arithmetic 
monodromy groups.
A pullback construction then yields local systems on $\A^1/\F_q$ whose geometric monodromy groups are $\SL_n(q)$, and also 
allows us to recover a construction of Abhyankar \cite{Abh}.

\section{The set up}
We work in characteristic $p>0$. We choose a prime $\ell \neq p$, so as to be able to work with $\overline{\Q_\ell}$-cohomology. We fix a nontrivial additive character $\psi$ of $\F_p$, a power $q$ of $p$, and an integer $n \ge 2$. We then define
$$A:=(q^n-1)/(q-1),\ \ B:=(q^{n-1}-1)/(q-1).$$

Recall that given an integer $N \ge 1$ prime to $p$, and a multiplicative character $\rho$, we define
$$\Char(N,\chi):=\{ \mbox{characters }\rho\ {\rm with\ }\rho^N=\chi\}$$
and 
$$\Char(N):=\Char(N,\triv),$$
the group of characters of order dividing $N$.

Our interest with be in the (sheaves geometrically isomorphic to the) following hypergeometric sheaves, indexed by
the multiplicative characters $\chi$ of order dividing $q-1$. We fix a nontrivial additive character $\psi$ of $\F_p$. For the trivial character, we consider
$$\sH_\triv:=\sH yp_\psi(\Char(A) \setminus \triv;\Char(B)), {\rm \ of \ rank\ }A-1.$$
For each nontrivial character $\chi$ of order dividing $q-1$, we consider the hypergeometric sheaf
$$\sH_\chi:=\sH yp_\psi(\Char(A,\chi);\Char(B,\chi),\triv), {\rm \ of \ rank\ }A.$$
\begin{lem}\label{geom-det}If $A$ is odd, then the geometric determinants are given by
$$\det(\sH_\chi)=\sL_\chi.$$
If $A$ is even (possible only when $p$ is odd), then the geometric determinants are given by
$$\det(\sH_\chi)=\sL_{\chi\chi_2},$$
for $\chi_2$ the quadratic character.
\end{lem}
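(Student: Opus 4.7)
The plan is to invoke the standard formula for the geometric determinant of a hypergeometric sheaf (see e.g.\ Katz, \emph{Exponential Sums and Differential Equations}, Ch.~8), which for $\sH_\chi$ gives
\[
\det(\sH_\chi)^{\rm geom} \;\cong\; \sL_\Pi, \qquad \Pi := \prod_{\rho\in\Char(A,\chi)}\rho,
\]
so that the task reduces to evaluating this product in the character group.

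Fix any $\rho_0\in\Char(A,\chi)$; such a $\rho_0$ exists because $p\nmid A$, so the group of continuous $\overline{\Q_\ell}^\times$-valued characters of $\overline{\F_p}^\times$ is $A$-divisible. Then $\Char(A,\chi)=\rho_0\cdot\Char(A)$ is a coset of the cyclic group $\Char(A)$ of order $A$, and
\[
\Pi \;=\; \rho_0^{A}\cdot\prod_{\zeta\in\Char(A)}\zeta \;=\; \chi\cdot\prod_{\zeta\in\Char(A)}\zeta.
\]
The product of all elements in a cyclic group of order $A$ is trivial when $A$ is odd and equals the unique element of order $2$ when $A$ is even. Since $A=1+q+\cdots+q^{n-1}$ is forced to be odd whenever $q$ is even, the case $A$ even occurs only for $p$ odd, and in that situation the unique order-$2$ element of $\Char(A)$ is the quadratic character $\chi_2$. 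This gives $\Pi=\chi$ when $A$ is odd and $\Pi=\chi\chi_2$ when $A$ is even, exactly as claimed.

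The main obstacle is making precise the reduction of the general determinant formula to $\sL_\Pi$ in our setting: one must show that the downstairs data $\Char(B,\chi)\cup\{\triv\}$ and the wild structure at $\infty$ (all slopes $1/(q^{n-1}-1)$, total Swan conductor $q^{n-1}-1$) contribute only trivially to the geometric determinant. Concretely, one checks that $\det(\sH_\chi)^{\rm geom}$ is tame at both $0$ and $\infty$, and computes its tame character at $0$ as the product of the tame characters of the local monodromy, which is exactly $\Pi$. Once this reduction is justified, the short cyclic-group calculation above finishes the proof.
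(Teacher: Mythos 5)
Your proposal is correct and follows essentially the same route as the paper: invoke the determinant formula from \cite[8.11.6]{Ka-ESDE} (valid here since the type gap is $A-B-1 = q^{n-1}-1 \ge 2$), reduce to computing $\Pi = \prod_{\rho\in\Char(A,\chi)}\rho$, identify $\Char(A,\chi)$ as a coset $\rho_0\cdot\Char(A)$ so that $\Pi = \chi \cdot \prod_{\zeta\in\Char(A)}\zeta$, and finish with the Wilson-type observation that the product over a cyclic group of order $A$ is trivial when $A$ is odd and the unique order-$2$ element $\chi_2$ when $A$ is even. The only cosmetic difference is that you frame the applicability of the determinant formula as an ``obstacle'' requiring verification of the slope data at $\infty$, whereas the paper treats this as already settled by the citation (and you could note that in the $\sH_\triv$ case, dropping $\triv$ from $\Char(A)$ does not change the product).
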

\begin{proof}One knows that the geometric determinant of a hypergeometric sheaf of type $(n,m)$ with $n-m \ge 2$ is the product of the ``upstairs" characters, cf. \cite[8.11.6]{Ka-ESDE}. For $\sH_\triv$, one knows that the product of all (or of all but $\triv$) the elements of $\Char(A)$ is $\triv$ if $A$ is odd, and $\chi_2$ otherwise. For the other $\sH_\chi$, the assertion is that the product of all the elements of $\Char(A,\chi)$ is $\chi\times \prod_{\rho \in \Char(A)}\rho$. To see this, pick one character $\Lambda \in \Char(A,\chi)$. Then the elements of $\Char(A,\chi)$ are precisely the products $\Lambda \rho$ with $\rho \in \Char(A)$, which makes clear that the product is as asserted.
\end{proof}


\section{The trace function of $\sH_\triv$}
For any $N \ge 2$ prime to $p$, the Kloosterman sheaf $\sK l_ \psi (\Char(N) \setminus \triv)$ is geometrically isomorphic to the lisse sheaf on $\G_m/\F_p$ whose trace function is given as follows: for $K/\F_p$ a finite extension and $t \in K^\times$, it is 
$$t \mapsto -\sum_{x \in K}\psi_K(Nx -x^N/t),$$
cf. \cite[Lemma 1.2, which concerns $\overline{\psi}$]{KRLT2}.
We also know that $\sK l_\psi(\Char(N))$  is geometrically isomorphic to the lisse sheaf on $\G_m/\F_p$ whose trace function is given as follows: for $K/\F_p$ a finite extension and $t \in K^\times$, it is 
$$t \mapsto \sum_{x \in K,\,x^N=t} \psi_k(Nx),$$
cf. \cite[5.6.2]{Ka-GKM}.

\begin{lem}\label{Htriv}The lisse sheaf on $\G_m/\F_p$ whose trace function is given at $u \in K^\times$ for $K/\F_p$ a finite extension, by
$$u \mapsto \sum_{x \in K,\,y \in K^\times}\psi_K\bigl((-1/u)x^A/y^B +x-y)$$
is geometrically isomorphic to $\sH_\triv$.
\end{lem}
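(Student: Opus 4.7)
The plan is to identify the given trace function as (a multiplicative translate of) the trace function of the standard multiplicative $!$-convolution presentation of $\sH_\triv$. I will invoke the convolution description of hypergeometric sheaves from \cite[Ch.~8]{Ka-ESDE}: up to a shift and a multiplicative translate on $\G_m$, the sheaf $\sH_\triv$ is geometrically isomorphic to the multiplicative $!$-convolution of $\sK l_\psi(\Char(A) \setminus \triv)$ with $[t \mapsto 1/t]^* \sK l_\psi(\Char(B))$. Both factors have explicit trace functions recalled at the beginning of this section.

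Combining these via the defining expression for $!$-convolution, the trace function of the convolution at $u \in K^\times$ equals
$$\sum_{a \in K^\times}\Bigl(-\sum_{x \in K}\psi_K(Ax - x^A/a)\Bigr)\Bigl(\sum_{\substack{y \in K \\ y^B = a/u}}\psi_K(By)\Bigr) = -\sum_{\substack{y \in K^\times \\ x \in K}}\psi_K\bigl(Ax - x^A/(uy^B) + By\bigr),$$
where the second equality reparametrizes the inner sum by $y \in K^\times$ with $a = uy^B$. Since $\gcd(AB, p) = 1$, the substitutions $x \mapsto x/A$ and $y \mapsto -y/B$ convert $Ax$ to $x$, $By$ to $-y$, and rescale the coefficient of $x^A/(uy^B)$ by the nonzero constant $(-1)^{B+1}B^B/A^A$. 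A final multiplicative translate $u \mapsto c u$, with $c = (-1)^B B^B/A^A$, turns that coefficient into $-1/u$ and produces exactly the sum in the lemma; the remaining overall sign is absorbed by the shift in the middle-convolution formula.

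The lemma then follows from the standard fact that two geometrically semisimple lisse $\overline{\Q_\ell}$-sheaves on $\G_m/\F_p$ whose Frobenius traces agree on every finite extension of $\F_p$ are geometrically isomorphic, together with the known geometric irreducibility of $\sH_\triv$.

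The main obstacle is bookkeeping: tracking the signs, Tate twists, and the explicit multiplicative translate arising from the shift in the middle convolution. Conceptually, once the sum is recognized as a Kloosterman convolution, the substitutions $Ax \leftrightarrow x$ and $By \leftrightarrow -y$ are essentially forced by the shapes of the two Kloosterman trace formulas, and the identification then becomes mechanical.
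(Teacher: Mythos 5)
Your proof is correct in outline, but it misses the key observation that makes the paper's argument a one-line calculation: since $A=(q^n-1)/(q-1)$ and $B=(q^{n-1}-1)/(q-1)$ are both $\equiv 1 \pmod q$, hence $\equiv 1 \pmod p$, one has $A=B=1$ in any field $K$ of characteristic $p$. Consequently $\psi_K(Ax)=\psi_K(x)$ and $\psi_K(By)=\psi_K(y)$ automatically, with no substitution required. The paper also convolves with (the multiplicative inverse of) the \emph{complex conjugate} $\sK l_{\overline\psi}(\Char(B))$ rather than $\sK l_\psi(\Char(B))$, which is what produces the $-y$ (rather than $+y$) in the exponent directly. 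With those two ingredients, the parametrization $t=uy^B$ turns the defining convolution trace into exactly the sum in the lemma, on the nose — no multiplicative translate, no sign bookkeeping.

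Your route — substituting $x\mapsto x/A$, $y\mapsto -y/B$ and then appealing to a multiplicative translate $u\mapsto cu$, with signs and translates absorbed by rigidity — does work: hypergeometric sheaves of type $(A-1,B)$ with $A-1>B$ are lisse on $\G_m$ with singularities only at $0$ and $\infty$, so multiplicative translates are geometrically isomorphic, and $(-1)^{\deg}$ is geometrically trivial. But the substitutions you perform are in fact the identity (resp. negation), and your constant $c=(-1)^B B^B/A^A$ is just $(-1)^B$; noticing $A\equiv B\equiv 1 \pmod p$ up front would have collapsed all of this. The conclusion is sound, but the argument is considerably more elaborate than necessary, and relies on rigidity and translate-invariance facts that the paper's proof avoids entirely.
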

\begin{proof}By defintion, $\sH_\triv$ is the multiplicative $!$ convolution of $\sK l_ \psi (\Char(A) \setminus \triv)$ with the multiplicative inverse of the complex conjugate of $\sK l_\psi(\Char(B))$. Thus for  $u \in K^\times$,  $K/\F_p$ a finite extension, we are looking at
$$u \mapsto \sum_{s, t \in K^\times,\,st=u}\ \sum_{x \in K}\psi_K\bigl(Ax -x^A/t)\sum_{y \in K^\times, y^B=1/s}\psi_K(-Bx).$$
Now use $t = u/s =uy^B$ to write this as
$$\sum_{x \in K,\,y \in K^\times}\psi_K\bigl(Ax -x^A/(uy^B)-By),$$
and note that both $A,B$ are $1$ mod $q$, so $1$ in $\F_p$.
\end{proof}

\section{The trace function of $\sH_\chi$ for $\chi \neq \triv$}
\begin{lem}\label{Hchi}For $\chi$ a nontrivial character of order dividing $q-1$, the lisse sheaf on $\G_m/\F_q$ whose trace function 
at $u \in K^\times$ for $K/\F_q$ a finite extension
is 
$$u \mapsto \sum_{x \in K,\,y \in K^\times}\psi_K((-1/u)x^A/y^B +x-y\bigr)\chi(x/y)$$
is geometrically isomorphic to $\sH_\chi$.
\end{lem}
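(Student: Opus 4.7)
The plan is to mirror the proof of Lemma~\ref{Htriv}: express $\sH_\chi$ as a multiplicative $!$-convolution of two Kloosterman sheaves whose trace functions are known, then carry out the resulting convolution integral and simplify. By the convolution definition of hypergeometric sheaves, $\sH_\chi$ is geometrically isomorphic to $\sK l_\psi(\Char(A,\chi))$ convolved via $*_!$ with the multiplicative inverse of the complex conjugate of $\sK l_\psi(\Char(B,\chi),\triv)$, so the task reduces to producing explicit trace formulas for both factors.

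For the upstairs factor, the first step is to derive a $\chi$-twisted version of the Kloosterman trace formula recalled in the excerpt. Fix any $\Lambda\in\Char(A,\chi)$ and write $\Char(A,\chi)=\Lambda\cdot\Char(A)$. Factoring $\sL_\Lambda$ out of the iterated convolution description of the Kloosterman, via the identity $(\sF\otimes\sL_\Lambda)*_!(\sG\otimes\sL_\Lambda)\cong(\sF*_!\sG)\otimes\sL_\Lambda$ (which comes from the projection formula together with $m^*\sL_\Lambda=\sL_\Lambda\boxtimes\sL_\Lambda$), yields the geometric isomorphism $\sK l_\psi(\Char(A,\chi))\cong\sK l_\psi(\Char(A))\otimes\sL_\Lambda$, whose trace at $t\in K^\times$ is $\Lambda(t)\sum_{x^A=t}\psi_K(Ax)$. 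For the downstairs factor, I would factor $\sK l_\psi(\Char(B,\chi),\triv)\cong\sK l_\psi(\Char(B,\chi))*_!\sL_\psi$ and compute directly, using $\Lambda'(y^B)=(\Lambda')^B(y)=\chi(y)$ for $\Lambda'\in\Char(B,\chi)$, to get trace $z\mapsto-\sum_{y\in K^\times}\chi(y)\psi_K(By+z/y^B)$; its inverse complex conjugate then has trace $s\mapsto-\sum_{y\in K^\times}\chi^{-1}(y)\psi_K(-By-1/(sy^B))$.

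To finish, I substitute the two trace functions into the convolution with $st=u$, parametrize by $t=x^A$ so that $s=u/x^A$, and use $\Lambda(x^A)=\chi(x)$ together with the congruences $A\equiv B\equiv 1\pmod p$ (so that $\psi_K(Ax)=\psi_K(x)$ and $\psi_K(By)=\psi_K(y)$). A routine manipulation parallel to the one in Lemma~\ref{Htriv} then collapses the double sum to $\sum_{x,y\in K^\times}\chi(x/y)\psi_K((-1/u)x^A/y^B+x-y)$, and the range of $x$ may be enlarged to all of $K$ at no cost because $\chi(0)=0$. The main obstacle I anticipate is tracking signs between the three sources of $-1$ (from the definition of $*_!$, from the downstairs convolution with $\sL_\psi$, and from the complex conjugation) and justifying the Kloosterman twist isomorphism cleanly; once those are in hand, the remainder is a direct adaptation of Lemma~\ref{Htriv}.
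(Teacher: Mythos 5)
Your plan is correct and follows essentially the same route the paper takes: it reduces the claim to convolving explicit Kloosterman trace functions, uses the $\chi$-twisted trace formula (which you rederive via $\sK l_\psi(\Char(N,\chi))\cong\sK l_\psi(\Char(N))\otimes\sL_\Lambda$, whereas the paper simply cites \cite[5.6.2]{Ka-GKM}), and exploits $A\equiv B\equiv 1\pmod p$ to collapse the exponent. The only organizational difference is that the paper first computes $\sH yp_\psi(\Char(A,\chi);\Char(B,\chi))$ and then convolves with $\sH yp(\emptyset;\triv)=\sL_{\psi(-1/x)}$, whereas you fold $\triv$ into the downstairs Kloosterman at the outset---these are equivalent by associativity of $*_!$.
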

\begin{proof} Again by \cite[5.6.2]{Ka-GKM}, we know that for $N \ge  1$ prime to $p$, $\sK l _\psi(\Char(N,\chi))$ is geometrically isomorphic to the lisse sheaf on  $\G_m/\F_q$ whose trace function is given as follows: for $K/\F_q$ a finite extension, and $t \in K^\times$, 
$$t \mapsto \sum_{x \in K,\,x^N=t}\psi_K(Nx)\chi_K(x).$$
Applying this with $N=A$ and with $N=B$, we see that $\sH yp_\psi(\Char(A,\chi);\Char(B,\chi))$ is geometrically isomorphic to the lisse sheaf on $\G_m/\F_q$ whose trace function is given as follows: for $K/\F_q$ a finite extension, and $v \in K^\times$,
$$v \mapsto \sum_{s,t \in K,\,st=v}\ \sum_{x \in K,\,x^A=t}\psi_K(Ax)\chi_K(x)\sum_{y \in K,\,y^B=1/s}\psi_K(-Bx)\chi_K(1/y)=$$
$$= \sum_{x,y \in K^\times,\, x^A/y^B=v}\psi_K(Ax-By)\chi_K(x/y) =$$
$$= \sum_{x,y \in K^\times,\, x^A/y^B=v}\psi_K(x-y)\chi_K(x/y),$$
the last equality because both $A,B$ are $1$ mod $q$.
To compute a sheaf geometrically isomorphic to $\sH_\chi$, we must further convolve with $\sH yp(\emptyset;\triv)=\sL_{\psi(-1/x)}$.
So our trace function is given as follows: for $K/\F_q$ a finite extension, and $u \in K^\times$,
$$u \mapsto \sum_{v,w \in K,\,vw=u}\psi_K(-1/w)\sum_{x \in K^\times,\, y \in K^\times,\, x^A/y^B=v}\psi_K(x-y)\chi_K(x/y)=$$
$$=\sum_{x,y \in K^\times}\psi_K\bigl(-(x^A/y^B)/u +x-y\bigr)\chi_K(x/y),$$
the last equality by using $vw=u$ to solve for $-1/w=-v/u$.
Because $\chi$ is nontrivial, the sum does not change if we also allow $x=0$ in the summation.
\end{proof}

\section{Putting it all together}
In the previous sections, we found that for each $\chi$ of order dividing $q-1$, trivial or not, $\sH_\chi$ is geometrically isomorphic to the lisse sheaf on $\G_m/\F_q$ whose trace function is given as follows: for $K/\F_q$ a finite extension, and $u \in K^\times$,
$$t \mapsto \sum_{x \in K,\,y \in K^\times}\psi_K\bigl((-1/u)x^A/y^B +x-y\bigr)\chi_K(x/y).$$
We now make the substitution $(x,y) \mapsto (xy,y)$. Then the above sum becomes
$$t \mapsto \sum_{x \in K,\,y \in K^\times}\psi_K\bigl((-1/u)x^Ay^{A-B} +xy-y\bigr)\chi_K(x).$$

To move to weight zero, we do a Tate twist {\bf (1)}. Concretely, we consider the lisse sheaves on $\G_m/\F_q$, denoted $\sF_\chi$,
whose trace functions are given at $t \in K^\times$ for $K/\F_q$ a finite extension, by
$$\sF_\chi: t \mapsto (1/\#K)\sum_{x \in K,\,y \in K^\times}\psi_K\bigl((-1/u)x^Ay^{A-B} +xy-y\bigr)\chi_K(x).$$
\begin{lem}\label{geomiso}For each $\chi \in \Char(q-1)$, the lisse sheaf $\sF_\chi$  on $\G_m/\F_q$ is geometrically isomorphic to $\sH_\chi$.
\end{lem}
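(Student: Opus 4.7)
The plan is to reduce this to a direct trace-function comparison, using the two preceding lemmas as the main input and observing that the only new operations involved — a bijective substitution on the summation range and a Tate twist — preserve geometric isomorphism class.

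First, I would recall the conclusions of Lemma \ref{Htriv} and Lemma \ref{Hchi}: for every $\chi \in \Char(q-1)$, there is a lisse sheaf on $\G_m/\F_q$ geometrically isomorphic to $\sH_\chi$ whose trace function at $u \in K^\times$ is
$$u \mapsto \sum_{x \in K,\,y \in K^\times}\psi_K\bigl((-1/u)x^A/y^B + x - y\bigr)\chi_K(x/y),$$
with the convention that the $\chi_K(x/y)$ factor is simply absent when $\chi = \triv$ (and noting that for nontrivial $\chi$ the $x=0$ term vanishes, so extending the $x$-range from $K^\times$ to $K$ is free).

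Second, I would apply the substitution $x \mapsto xy$ inside the $x$-sum. For each fixed $y \in K^\times$ this is a bijection of $K$ with itself, so the sum is unchanged. Under this substitution, $x^A/y^B$ becomes $x^Ay^{A-B}$, the linear term $x-y$ becomes $xy - y$, and $\chi_K(x/y)$ becomes $\chi_K(x)$. The resulting sum is exactly $\#K$ times the trace function defining $\sF_\chi$.

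Third, I would observe that the factor of $1/\#K$ inserted in the definition of $\sF_\chi$ is precisely the effect on trace functions of tensoring with the Tate twist $\overline{\Q_\ell}(1)$. Since the Tate twist acts trivially on the geometric fundamental group of $\G_m/\F_q$, it preserves geometric isomorphism class. Combining the two steps, $\sF_\chi$ is geometrically isomorphic to the lisse sheaf produced in Lemma \ref{Htriv} or \ref{Hchi}, and hence to $\sH_\chi$.

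Nothing here is genuinely hard; the only point requiring care is the handling of the $x = 0$ summand when $\chi = \triv$, where one has to make sure the chosen convention for $\triv_K(0)$ is consistent with the formulation of Lemma \ref{Htriv}. The appeal to Chebotarev — that equality of trace functions on all $K$-points for all finite extensions $K/\F_q$, combined with a known geometric isomorphism on one side, forces a geometric isomorphism on the other — is the standard last step and does not present an obstacle, since both sheaves in question are lisse on $\G_m/\F_q$ and are geometrically irreducible as hypergeometric sheaves.
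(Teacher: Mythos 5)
Your proof is correct and takes essentially the same route as the paper: the paper's one-line proof (``Immediate from Lemmas \ref{Htriv} and \ref{Hchi}'') is shorthand for exactly the substitution $(x,y)\mapsto(xy,y)$ and Tate twist $(1)$ carried out in the paragraph preceding the lemma, which you have reproduced faithfully, including the correct handling of the $x=0$ term and the observation that the twist is geometrically trivial.
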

\begin{proof}Immediate from Lemmas \ref{Htriv} and \ref{Hchi}.
\end{proof}
\begin{thm}\label{individualfinite}Each of the sheaves $\sF_\chi$ has finite $G_{\arith}$ and (hence) finite $G_{\geom}$.
\end{thm}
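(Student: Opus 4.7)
The plan is to apply the standard finite-monodromy criterion: a lisse $\overline{\Q_\ell}$-sheaf which is pure of weight $0$ and whose Frobenius traces are algebraic integers must have finite $G_\arith$, for then every Frobenius eigenvalue is an algebraic integer of complex absolute value $1$, hence a root of unity by Kronecker, forcing $G_\arith$ to be finite (see e.g.\ \cite[8.14.3]{Ka-ESDE}). Purity of weight $0$ is built in by construction: $\sH_\chi$ is pure of the appropriate weight by the standard theory of hypergeometric sheaves, and the Tate twist by $(1)$ in the definition of $\sF_\chi$ is precisely what brings the sheaf to weight $0$. Consequently one has the uniform bound $|\Tr(\mathrm{Frob}_{t,K}\mid\sF_\chi)|\le\rank(\sF_\chi)\le A$.

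The main step is algebraic integrality of the Frobenius traces. The natural approach, given the numerical coincidence $\rank(\sF_\chi)=\dim W_\chi=A$ (respectively $\dim(W_\triv/\C\cdot 1_{V^\star})=A-1$ when $\chi=\triv$), is to identify the trace function of $\sF_\chi$ with a character value of some explicit element $g_{t,K}\in\GL_n(q)$ on its $\chi$-Weil representation $W_\chi$. Using the $\F_q$-vector-space isomorphism $V=\F_q^n\cong\F_{q^n}$, the exponent $A=(q^n-1)/(q-1)$ realises $x\mapsto x^A$ as the norm $N_{\F_{q^n}/\F_q}$, and $A-B=q^{n-1}$ realises $y\mapsto y^{A-B}$ as the $(n-1)$st Frobenius; both features are intrinsic to the ambient $\GL_n(q)$-representation theory. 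I would then attempt to rewrite
$$\frac{1}{\#K}\sum_{x\in K,\,y\in K^\times}\psi_K\bigl((-1/t)x^A y^{A-B}+xy-y\bigr)\chi_K(x)$$
in the form
$$\frac{1}{q-1}\sum_{\lambda\in\F_q^\times}\chi^{-1}(\lambda)\bigl(|V^{\lambda g_{t,K}}|-1\bigr)=\Tr(g_{t,K}\mid W_\chi),$$
which is automatically an algebraic integer as a character value of a finite-group representation.

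The hard part is this identification: while the numerology strongly suggests it, actually pinning down the element $g_{t,K}\in\GL_n(q)$ attached to each pair $(t,K)$ and verifying the character identity requires careful manipulation of the inner twisted exponential sums (most likely by first summing over $y$ to isolate a Kloosterman/Gauss-type sum in $x$, or vice versa, and then invoking the definition of the Weil representation). Once integrality is in hand, the finite-monodromy criterion yields $|G_\arith|<\infty$, and $|G_\geom|<\infty$ follows since $G_\geom\triangleleft G_\arith$. The subsequent Theorem~\ref{main-sl2} will then refine this to the explicit identification of $G_\arith=G_\geom$ with the image of $\GL_n(q)$ in its $\chi$-Weil representation.
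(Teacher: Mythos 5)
Your high-level plan is the right one — the paper also reduces to showing that the Frobenius traces of the weight-zero twist $\sF_\chi$ are algebraic integers and then invokes \cite[8.14.4]{Ka-ESDE} — but you never actually prove integrality, and the route you sketch toward it is not the one that works cleanly here. Identifying the trace of $\sF_\chi$ at each Frobenius with a character value $\Tr(g_{t,K}\mid W_\chi)$ of an explicit element of $\GL_n(q)$ is exactly the content of the much later Theorem~\ref{main-sl2}, which requires the full group-theoretic machinery of the paper (Theorems~\ref{main-weil-sl}, \ref{main-sl1}, \ref{main-sl21}, etc.) and in particular already \emph{uses} finite monodromy as an input. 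Taking it as the mechanism for proving finiteness would be circular or at best would require redoing a large amount of that analysis up front; your proposal concedes this by saying ``the hard part is this identification'' and leaving it undone.

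The actual argument is short and elementary once you exploit that $A-B=q^{n-1}$ is a power of $p$, not merely ``the $(n-1)$st Frobenius'' in a descriptive sense but in the operative sense that $\psi_K(z^{q^{n-1}})=\psi_K(z)$ because $\Trace_{K/\F_p}$ is Frobenius-invariant. Raising the terms $xy$ and $-y$ inside $\psi_K$ to the $(A-B)$th power, one can factor $y^{A-B}$ out of the whole argument of $\psi_K$; since $y\mapsto y^{A-B}$ is a bijection of $K^\times$, the substitution $y^{A-B}\rightsquigarrow y$ turns the $y$-sum (after adjoining the $y=0$ term, which costs a single $\delta_{\triv,\chi}$) into a full additive character sum that vanishes unless $(-1/u)x^A+x^{A-B}-1=0$. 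The trace is therefore $-\delta_{\triv,\chi}+\sum_{x\in K,\,(-1/u)x^A+x^{A-B}-1=0}\chi_K(x)$, a finite sum of roots of unity, hence an algebraic integer, and finiteness follows. This is the step missing from your write-up.
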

\begin{proof}The key observation is that
$$A-B =q^{n-1}$$
is a power of $q$. Therefore the trace sum of $\sF_\chi$ does not change if we raise some the terms inside the $\psi$ to the $A-B$ power, since this does not alter $\Trace_{K/\F_q}$. Thus the trace sum for $\sF_\chi$ at time $u \in K^\times$ is equal to
$$(1/\#K)\sum_{x \in K,\,y \in K^\times}\psi_K\bigl((-1/u)x^Ay^{A-B} +x^{A-B}y^{A-B}-y^{A-B}\bigr)\chi_K(x).$$
Factoring out the $y^{A-B}$ term, we rewrite this as
$$(1/\#K)\sum_{x \in K}\sum_{y \in K^\times}\psi_K\bigl(y^{A-B}((-1/u)x^A +x^{A-B}-1)\bigr)\chi_K(x).$$
Because $y \mapsto y^{A-B}$ is an automorphism of $K$, so a bijection on $K^\times$, this sum is equal to
$$(1/\#K)\sum_{x \in K}\sum_{y \in K^\times}\psi_K\bigl(y((-1/u)x^A +x^{A-B}-1)\bigr)\chi_K(x).$$
In this sum, which ``makes sense" for $y=0$, the $y=0$ term would be 
$$(1/\#K)\sum_{x \in K}\chi_K(x),$$
which is $1$ for $\chi=\triv$, and $0$ otherwise. So our sum is
$$-\delta_{\triv,\chi} + (1/\#K)\sum_{x \in K}\sum_{y \in K}\psi_K\bigl(y((-1/u)x^A +x^{A-B}-1)\bigr)\chi_K(x).$$
The sum over $y$ is
$$0,\ {\rm unless \ } (-1/u)x^A +x^{A-B}-1=0,\ {\rm in\ which\ case\  it \ is \ }\chi_K(x).$$
Thus the trace of $\sF_\chi$ at time $u \in K^\times$ is
$$if \chi=\triv, -1 + {\rm \ number\ of \  solutions \ }x \in K \ {\rm of \ } (-1/u)x^A +x^{A-B}-1=0,$$
$$if \chi \neq \triv, \sum_{x \in K,\, (-1/u)x^A +x^{A-B}-1=0}\chi_K(x).$$
So in all cases, $\sF_\chi$ has algebraic integer traces, and we are done by \cite[8.14.4, (1) $\iff$ (2) $\iff$ (6)]{Ka-ESDE}.
\end{proof}

\begin{cor}\label{induced}Denote by $f(t)$ the polynomial
$$f(t):=t^B(1-t)^{A-B},$$
and denote by $\inv$ the multiplicative inversion $u \mapsto 1/u$ on $\G_m$.
Then on $\G_m/\F_q$ we have arithmetic isomorphisms
$$f_\star \Q_\ell/\Q_\ell \cong \inv^\star \sF_\triv,$$
and, for each nontrivial $\chi$ of order dividing $q-1$,
$$f_\star \sL_{\overline{\chi}}\cong \inv^\star \sF_\chi.$$
\end{cor}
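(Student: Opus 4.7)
The plan is to match trace functions on every finite extension $K/\F_q$ and invoke Chebotarev. The key computation is a change of variables $t = 1/x$ together with the Frobenius identity $(1 - 1/x)^{A-B} = 1 - x^{-(A-B)}$, valid because $A - B = q^{n-1}$ is a power of $p$; this is the same identity that powered the proof of Theorem \ref{individualfinite}. Before starting I would verify that $f : \A^1 \setminus \{0,1\} \to \G_m$ is a finite étale map of degree $A$: finiteness holds since $f$ is a polynomial of degree $A$ with $f^{-1}(0) = \{0,1\}$, and a direct computation gives $f'(t) = t^{B-1}(1-t)^{A-B-1}(B - At)$, which since $A, B \equiv 1 \pmod p$ (both $A$ and $B$ equal $1 + q + q^2 + \cdots$, and $q \equiv 0 \pmod p$) reduces to $t^{B-1}(1-t)^{A-B-1}(1-t)$, vanishing only at $t = 0,1$. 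Consequently $f_\star \sL_{\overline\chi}$ is lisse of rank $A$ on $\G_m$ and $f_\star \Q_\ell / \Q_\ell$ is lisse of rank $A-1$, matching the ranks of $\sH_\chi$ and $\sH_\triv$.

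The trace comparison then proceeds as follows. By Theorem \ref{individualfinite}, the trace of $\inv^\star \sF_\chi$ at $u \in K^\times$ is $-\delta_{\chi,\triv} + \sum_{x \in K,\, x^{A-B} - u x^A = 1} \chi_K(x)$. Since $x = 0$ is not a solution, for $x \in K^\times$ I divide by $x^A$ to rewrite the defining equation as
$$ u = x^{-B} - x^{-A} = (1/x)^B \bigl(1 - 1/x\bigr)^{A-B} = f(1/x), $$
the middle step being the Frobenius identity. The substitution $t := 1/x$ therefore gives a bijection between the solution set and $\{t \in K^\times : f(t) = u\}$, which is the entire fiber $f^{-1}(u)(K)$ since $f(0) = f(1) = 0 \neq u$. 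Under the bijection $\chi_K(x) = \overline{\chi}_K(t)$, so the trace of $\inv^\star \sF_\chi$ at $u$ equals $-\delta_{\chi,\triv} + \sum_{t \in f^{-1}(u)(K)} \overline{\chi}_K(t)$; for $\chi \neq \triv$ this is exactly the trace of $f_\star \sL_{\overline\chi}$, and for $\chi = \triv$ it is $\#f^{-1}(u)(K) - 1$, the trace of $f_\star \Q_\ell / \Q_\ell$.

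With both sides lisse on $\G_m/\F_q$ and equal trace functions on every $K^\times$, Chebotarev density yields the asserted arithmetic isomorphisms, given that both sides are arithmetically semisimple; this follows from the geometric irreducibility of the hypergeometric sheaves $\sH_\chi$ (which forces arithmetic irreducibility of any lisse sheaf geometrically isomorphic to $\sH_\chi$). The main obstacle in the whole argument is the étaleness check: without the congruences $A, B \equiv 1 \pmod p$ the polynomial $f$ would acquire an additional critical point at $t = B/A$ and $f_\star$ would drop rank at its image, breaking lisseness on $\G_m$; it is a pleasant consequence of the arithmetic of $A$ and $B$ that this extra critical point disappears in characteristic $p$, and once it does the Frobenius trick of Theorem \ref{individualfinite} handles the rest.
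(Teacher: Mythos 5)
Your proof is correct and follows essentially the same route as the paper: pass to $\inv^\star \sF_\chi$, substitute $t = 1/x$, and use $(1-t)^{A-B} = 1 - t^{A-B}$ (valid since $A-B = q^{n-1}$ is a $p$-power) to identify the trace with that of $f_\star$, concluding by trace-function equality together with the arithmetic irreducibility coming from the geometric irreducibility of $\sH_\chi$. Your explicit check that $f$ is \'etale over $\G_m$ (the would-be extra critical point $B/A$ collapses to $1$ mod $p$) is a nice detail the paper leaves implicit.
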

\begin{proof}The trace function of $\inv^\star \sH_\chi$ attaches to $u \in K^\times$, $K/\F_q$ a finite extension, the sum
$$-\delta_{\triv,\chi} + \sum_{x \in K,\, (-u)x^A +x^{A-B}-1=0}\chi_K(x).$$
The polynomial $(-u)x^A +x^{A-B}-1$ has all its roots nonzero. Dividing through by $x^A$, we may write it as a polynomial in $1/x:=t$. it becomes (remembering that $A-B =q^{n-1}$ is a power of $p$)
$$( (-u)x^A +x^{A-B}-1)/xA =-u +x^{-B}-x^{-A}=-u+t^B-t^A = t^B(1-t)^{A-B}-u.$$
Thus the trace becomes
$$-\delta_{\triv,\chi} + \sum_{t \in K,\,t^B(1-t)^{A-B}=u}\overline{\chi}_K(t),$$
which is precisely the trace function of $f_\star \Q_\ell/\Q_\ell$ for $\chi =\triv$, and of $f_\star \sL_{\overline{\chi}}$ when $\chi \neq \triv$.
Because the sheaves $\sF_\chi$ are each (geometrically, and hence) arithmetically irreducible, this equality of trace functions implies arithmetic isomorphisms of sheaves.
\end{proof}
\begin{cor}\label{weilcandidate}The trace function of $\oplus_{\chi \in \Char(q-1)}\inv^\star \sF_\chi$ at $u \in K^\times$, $K/\F_q$ a finite extension, is
$$-1 +{\rm \ number\ of \  solutions \ }T \in K \ {\rm of \ } T^{(q-1)B}(1-T^{q-1})^{A-B}=u.$$
\end{cor}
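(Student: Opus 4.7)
The plan is to compute the trace function of the direct sum by summing the individual trace formulas from Corollary \ref{induced}, and then to collapse the resulting sum over characters via orthogonality, converting a $(q-1)$-th power condition on $t$ into a parametrization by $T$ with $t = T^{q-1}$.

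First, I would combine Corollary \ref{induced} with the trace description in the proof of that corollary. For each $\chi \in \Char(q-1)$, the trace of $\inv^\star \sF_\chi$ at $u \in K^\times$ equals
$$-\delta_{\triv,\chi} + \sum_{t \in K,\ f(t)=u} \overline{\chi}_K(t),$$
where $f(t) = t^B(1-t)^{A-B}$. Summing this over all $\chi \in \Char(q-1)$ and swapping the order of summation gives
$$-1 + \sum_{t \in K,\ f(t)=u} \ \sum_{\chi \in \Char(q-1)} \overline{\chi}_K(t).$$
Note that for $u \in K^\times$, the condition $f(t) = u$ forces $t \notin \{0,1\}$, so in particular $t \in K^\times$ and the character values $\overline{\chi}_K(t)$ are well defined.

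Next I would evaluate the inner character sum by orthogonality. Since $K/\F_q$ is a finite extension, $q-1$ divides $\#K - 1$, so $\Char(q-1)$ consists of exactly $q-1$ characters of $K^\times$, namely those trivial on $(K^\times)^{q-1}$. Standard orthogonality for the pairing between $K^\times/(K^\times)^{q-1}$ and $\Char(q-1)$ yields
$$\sum_{\chi \in \Char(q-1)} \overline{\chi}_K(t) = \begin{cases} q-1 & \text{if } t \in (K^\times)^{q-1}, \\ 0 & \text{otherwise}. \end{cases}$$
Therefore the trace equals $-1 + (q-1) \cdot N(u)$, where $N(u)$ counts $t \in (K^\times)^{q-1}$ with $f(t) = u$.

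Finally, I would parametrize $(q-1)$-th powers. The map $T \mapsto T^{q-1}$ from $K^\times$ to $K^\times$ has kernel of order $\gcd(q-1,\#K-1)=q-1$, so it is a $(q-1)$-to-one surjection onto $(K^\times)^{q-1}$. Writing $t = T^{q-1}$, we have $f(T^{q-1}) = T^{(q-1)B}(1-T^{q-1})^{A-B}$, so $(q-1) \cdot N(u)$ equals the number of $T \in K^\times$ satisfying $T^{(q-1)B}(1-T^{q-1})^{A-B} = u$. Since $T = 0$ makes the left side $0 \neq u$, we may replace $T \in K^\times$ by $T \in K$ without changing the count, yielding the claimed formula.

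The only point that requires genuine care is the character-sum step: one must verify that $\Char(q-1)$ really has $q-1$ elements in $K^\times$ (not fewer) and that the orthogonality identifies $(q-1)$-th powers rather than $(\#K-1)/(q-1)$-th powers. Everything else is bookkeeping.
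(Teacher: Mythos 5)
Your proof takes essentially the same approach as the paper's: sum the individual trace formulas from Corollary \ref{induced}, apply orthogonality to the inner sum over $\chi$, and reparametrize the $(q-1)$-th powers $t$ by $t = T^{q-1}$. You spell out a few details the paper leaves implicit (that $\Char(q-1)$ really gives $q-1$ distinct characters of $K^\times$, that the map $T \mapsto T^{q-1}$ is $(q-1)$-to-one onto $(K^\times)^{q-1}$, and that $T=0$ contributes nothing), but the argument is the same.
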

\begin{proof}The trace at $u \in K^\times$, $K/\F_q$ a finite extension, is $-1$ plus
$$\sum_{t \in K,\,t^B(1-t)^{A-B}=u}\ \sum_{\chi \in \Char(q-1)}\overline{\chi_K}(t).$$
The sum over $\chi$ vanishes unless $t$ is a $q-1$ power in $K^\times$, in which case we may write $t =T^{q-1}$ for a choice of $q-1$ possible $T \in K^\times$. So the trace is $-1$ plus the number of solutions $T \in K$ of
$$ T^{(q-1)B}(1-T^{q-1})^{A-B}=u.$$
\end{proof}

From Corollary \ref{induced}, we get
\begin{cor}\label{inducedbis}For $f$ the polynomial $f(t):=t^B(1-t)^{A-B}$, we have an arithmetic isomorphism on $\G_m/\F_q$
$$ f_\star \bigl(\oplus_{\chi \in \Char(q-1)}\sL_\chi\bigr) \cong \overline{\Q_\ell} \oplus \bigl(\oplus_{\chi \in \Char(q-1)}\inv^\star\sF_\chi \bigr)$$
\end{cor}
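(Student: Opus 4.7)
The plan is to decompose $f_\star\bigl(\oplus_\chi \sL_\chi\bigr) \cong \oplus_{\chi \in \Char(q-1)} f_\star \sL_\chi$, since proper (here finite) pushforward is additive on direct sums, and then identify each summand on the right using Corollary \ref{induced}. The trivial and nontrivial $\chi$ need to be handled slightly differently, and there is one additional point: extracting the constant summand $\overline{\Q_\ell}$ from $f_\star \sL_\triv$.

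For the nontrivial characters, Corollary \ref{induced} applied to $\overline{\chi}$ (which is again a nontrivial element of $\Char(q-1)$) gives an arithmetic isomorphism $f_\star \sL_\chi \cong \inv^\star \sF_{\overline{\chi}}$. Summing over all nontrivial $\chi$ and re-indexing via the involution $\chi \mapsto \overline{\chi}$ on $\Char(q-1) \setminus \{\triv\}$ yields
$$\bigoplus_{\chi \neq \triv} f_\star \sL_\chi \; \cong \; \bigoplus_{\chi \neq \triv} \inv^\star \sF_\chi.$$

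For the trivial character, $f_\star \sL_\triv = f_\star \overline{\Q_\ell}$ must be split. This is the only nonroutine step, but it is standard: the unit map $\overline{\Q_\ell} \hookrightarrow f_\star \overline{\Q_\ell}$ and the trace map $f_\star \overline{\Q_\ell} \to \overline{\Q_\ell}$ compose to multiplication by $\deg(f) = A = 1 + q + \cdots + q^{n-1}$, which is a nonzero integer and therefore invertible in $\overline{\Q_\ell}$. Dividing the trace map by $A$ furnishes a retraction, hence a canonical (and arithmetic) splitting $f_\star \overline{\Q_\ell} \cong \overline{\Q_\ell} \oplus \bigl(f_\star \overline{\Q_\ell}/\overline{\Q_\ell}\bigr)$. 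By Corollary \ref{induced} the quotient is arithmetically isomorphic to $\inv^\star \sF_\triv$, so $f_\star \sL_\triv \cong \overline{\Q_\ell} \oplus \inv^\star \sF_\triv$.

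Combining the two cases gives exactly the asserted arithmetic isomorphism
$$f_\star\bigl(\oplus_{\chi \in \Char(q-1)}\sL_\chi\bigr) \cong \overline{\Q_\ell} \oplus \bigl(\oplus_{\chi \in \Char(q-1)}\inv^\star\sF_\chi\bigr).$$
The main (and essentially only) substantive point is the splitting of $f_\star \overline{\Q_\ell}$, which reduces to the numerical observation that $A \neq 0$ in $\overline{\Q_\ell}$; everything else is bookkeeping driven by Corollary \ref{induced} and the bijection $\chi \leftrightarrow \overline{\chi}$ on $\Char(q-1)$.
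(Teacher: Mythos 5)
Your proof is correct and matches the paper's intent: the paper gives no written proof beyond ``From Corollary \ref{induced}, we get,'' and your argument—decomposing $f_\star$ over the direct sum, applying Corollary \ref{induced} termwise with the re-indexing $\chi \leftrightarrow \overline\chi$, and splitting $f_\star\overline{\Q_\ell}$ via the unit/trace maps using $\deg(f)=A \neq 0$—is exactly the bookkeeping the authors leave to the reader.
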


In what follows, we will let $\sW(n,q)$ denote the local system $\oplus_{\chi \in \Char(q-1)}\sF_\chi$.

\begin{cor}\label{inducedter}For $F$ the polynomial 
$$F(T):= T^{q^{n-1}-1}-T^{q^n-1},$$
we have an arithmetic isomorphism on $\G_m/\F_q$,
$$F_\star\overline{\Q_\ell}/\overline{\Q_\ell} \cong \inv^\star (\sW(n,q)).$$
\end{cor}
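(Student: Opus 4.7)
The plan is to match trace functions on both sides and invoke Chebotarev. First, observe that since $A-B = q^{n-1}$ is a power of $p$, Frobenius linearity gives $(1 - T^{q-1})^{A-B} = 1 - T^{(q-1)q^{n-1}} = 1 - T^{q^n - q^{n-1}}$, and combined with $(q-1)B = q^{n-1}-1$ this yields
\[
T^{(q-1)B}(1-T^{q-1})^{A-B} \;=\; T^{q^{n-1}-1}\bigl(1 - T^{q^n-q^{n-1}}\bigr) \;=\; T^{q^{n-1}-1} - T^{q^n-1} \;=\; F(T).
\]
Thus the polynomial appearing in Corollary \ref{weilcandidate} is precisely $F$, and that corollary computes the trace function of $\inv^\star \sW(n,q)$ at $u \in K^\times$ to be $-1 + \#\{T \in K : F(T) = u\}$.

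Next, I would check that $F$ is finite étale over $\G_m$. Computing $F'(T) = -T^{q^{n-1}-2}(1 - T^{q^n - q^{n-1}})$ (using that $q^n - 1 \equiv q^{n-1} - 1 \equiv -1 \pmod p$), the critical points are $T=0$ and the $(q-1)$-th roots of unity, all of which map to $F = 0$. So $F \colon F^{-1}(\G_m) \to \G_m$ is finite étale of degree $q^n-1$, and $F_\star \overline{\Q_\ell}$ is a lisse sheaf on $\G_m$. As the direct image of the constant sheaf along a finite étale cover, it corresponds to a permutation representation of a finite quotient of $\pi_1^{\arith}(\G_m/\F_q)$, hence is semisimple. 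Since $F^{-1}(\G_m)$ is a nonempty open subset of $\A^1$, it is geometrically connected, so the constant sheaf $\overline{\Q_\ell}$ occurs in $F_\star\overline{\Q_\ell}$ with multiplicity one and the quotient $F_\star\overline{\Q_\ell}/\overline{\Q_\ell}$ is a well-defined semisimple lisse sheaf of rank $q^n - 2$ on $\G_m$ whose trace at $u \in K^\times$ is exactly $-1 + \#\{T \in K : F(T) = u\}$.

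Both $F_\star\overline{\Q_\ell}/\overline{\Q_\ell}$ and $\inv^\star\sW(n,q) = \bigoplus_{\chi \in \Char(q-1)} \inv^\star \sF_\chi$ are semisimple lisse $\overline{\Q_\ell}$-sheaves on $\G_m/\F_q$ with the same trace function on $K$-points for every finite extension $K/\F_q$. By Chebotarev together with Brauer-Nesbitt, a semisimple continuous $\overline{\Q_\ell}$-representation of $\pi_1^{\arith}(\G_m/\F_q)$ is determined up to isomorphism by the traces of Frobenius elements at closed points, so the two are arithmetically isomorphic, as asserted.

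The main obstacle is really just the exponent bookkeeping in the first step: once the Frobenius identity $(1-T^{q-1})^{q^{n-1}} = 1 - T^{q^n - q^{n-1}}$ identifies $F(T)$ with $T^{(q-1)B}(1-T^{q-1})^{A-B}$, Corollary \ref{weilcandidate} supplies the trace function on the right-hand side and the rest is a standard Chebotarev argument.
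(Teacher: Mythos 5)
Your proof is correct, but you take a slightly different and longer route than the one the paper intends. The paper states Corollary~\ref{inducedter} with no proof because it is meant to fall out of Corollary~\ref{inducedbis} directly at the sheaf level: the key polynomial identity you derive, $F(T)=T^{(q-1)B}(1-T^{q-1})^{A-B}$, can be read as the functional identity $F = f\circ[q-1]$ where $f(t)=t^B(1-t)^{A-B}$ and $[q-1]$ is the $(q-1)$-power Kummer map. Since $[q-1]_\star\overline{\Q_\ell}\cong\bigoplus_{\chi\in\Char(q-1)}\sL_\chi$ on $\G_m/\F_q$ (which contains $\mu_{q-1}$), one gets
$$F_\star\overline{\Q_\ell}=f_\star[q-1]_\star\overline{\Q_\ell}=f_\star\Bigl(\bigoplus_\chi\sL_\chi\Bigr)\cong\overline{\Q_\ell}\oplus\inv^\star\sW(n,q)$$
by Corollary~\ref{inducedbis}, and the asserted isomorphism follows on dividing by $\overline{\Q_\ell}$. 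You instead match trace functions (using Corollary~\ref{weilcandidate}) and appeal to Chebotarev plus Brauer--Nesbitt for semisimple $\overline{\Q_\ell}$-representations. This is sound --- your computation of $F'$, the identification of the critical locus with $\{0\}\cup\mu_{q-1}$ all mapping to $0$, the geometric connectedness of $F^{-1}(\G_m)$ giving multiplicity one for the constant sheaf, and the semisimplicity of both sides are all correct --- but it reproves at the level of traces a statement already packaged as a sheaf isomorphism in Corollary~\ref{inducedbis}. The composition argument is shorter and more structural; your trace-matching argument has the minor advantage of being self-contained (not relying on Corollary~\ref{inducedbis}) and of making the \'etale-ness and semisimplicity hypotheses fully explicit.
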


The local system $F_\star\overline{\Q_\ell}/\overline{\Q_\ell} $ lives on $\G_m/\F_p$, and thus provides a descent
of $\sW(n,q)$ to $\G_m/\F_p$. 

\begin{lem}\label{222}
Let $q_0 > 1$ be a power of a prime $p$, $K_0:=\F_{q_0^3}$. 
For each $u \in K_0$, let $N(u)$ denote the number of solutions in $K_0$ of the 
equation $T^{q_0^2}-T^{q_0}=uT$. Then the following statements hold.
\begin{enumerate}[\rm(i)]
\item Suppose $p=2$. Then $N(1) = q_0^2$. Furthermore, $N(u)=q_0$ for exactly $q_0^2$ values of $u \in K_0 \smallsetminus \{1\}$,
and $N(u)=1$ for all the remaining $q_0^3-q_0^2-1$ values of $u \in K_0 \smallsetminus \{1\}$. 
\item Suppose $p > 2$. Then $N(u) = q_0$ for exactly $q_0^2+q_0+1$ values in $u \in K_0$, and
$N(u)=1$ for all the remaining $q_0^3-q_0^2-q_0-1$ values of $u \in K_0$.
\end{enumerate}
\end{lem}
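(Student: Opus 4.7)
The plan is to reinterpret $N(u)$ via the $\F_{q_0}$-linearity of the defining equation. The map $\phi_u : K_0 \to K_0$, $T \mapsto T^{q_0^2} - T^{q_0} - uT$, is $\F_{q_0}$-linear, so $N(u) = |\ker \phi_u|$ is a power of $q_0$. Since $\phi_u(1) = -u$ (ruling out $q_0^3$ when $u \neq 0$) and $\ker \phi_0 = \F_{q_0}$, we have $N(u) \in \{1, q_0, q_0^2\}$. For $T \neq 0$, dividing the equation by $T$ gives $T^{q_0^2 - 1} - T^{q_0 - 1} = u$; setting $\lambda := T^{q_0 - 1}$ turns $T \mapsto \lambda$ into a $(q_0 - 1)$-to-one surjection $K_0^\times \twoheadrightarrow H$, where $H$ is the unique subgroup of order $q_0^2 + q_0 + 1$ (the kernel of the norm $N_{K_0/\F_{q_0}}$). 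Hence $N(u) = 1 + (q_0 - 1) r_u$ where $r_u := |\Psi^{-1}(u)|$ for $\Psi : H \to K_0$, $\Psi(\lambda) := \lambda^{q_0+1} - \lambda$. A direct check shows $\Psi(\lambda) = 0$ implies $\lambda^{q_0} = 1$ in $H$, forcing $\lambda = 1$ (as $\gcd(q_0, |H|) = 1$); so $N(0) = q_0$.

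The heart of the argument is a collision analysis. Suppose $\lambda \neq \mu$ in $H$ satisfy $\Psi(\lambda) = \Psi(\mu)$. Using $\lambda^{q_0+1} = \lambda \sigma(\lambda) = N_{K_0/\F_{q_0}}(\lambda)/\sigma^2(\lambda) = \sigma^2(\lambda)^{-1}$ (and similarly for $\mu$), the equality rearranges to $\sigma^2(\lambda - \mu)/\sigma^2(\lambda \mu) = -(\lambda - \mu)$; applying $\sigma$ and dividing by $\lambda - \mu$ yields the key identity
\[
(\lambda - \mu)^{q_0 - 1} \cdot \lambda \mu \,=\, -1.
\]

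For part (ii), with $p > 2$: the LHS is a $(q_0-1)$-th power, so lies in $H$ and has norm $1$, whereas $N_{K_0/\F_{q_0}}(-(\lambda\mu)^{-1}) = (-1)^3 / 1 = -1 \neq 1$ in $\F_{q_0}^\times$ — a contradiction. So $\Psi$ is injective, and exactly $|H| = q_0^2 + q_0 + 1$ values of $u$ (namely $\Psi(H)$, which includes $u = 0$) have $r_u = 1$ and $N(u) = q_0$; the remaining $q_0^3 - q_0^2 - q_0 - 1$ values give $N(u) = 1$.

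For part (i), with $p = 2$: the identity becomes $\delta^{q_0 - 1}\lambda\mu = 1$ with $\delta := \lambda + \mu$, so $\lambda\mu = \delta^{1-q_0}$, and $\lambda, \mu$ are the two roots of $X^2 + \delta X + \delta^{1-q_0}$. I claim every such collision satisfies $\Psi(\lambda) = 1$: expanding $\Psi(\mu) - \Psi(\lambda) = \Psi(\lambda + \delta) - \Psi(\lambda)$ in characteristic $2$ and setting it to $0$ yields $\lambda^{q_0} = \delta^{q_0-1}\lambda + \delta^{q_0} + 1$; substituting the scaled quadratic $\delta^{q_0-1}\lambda^2 = \delta^{q_0}\lambda + 1$ into $\lambda^{q_0+1} = \lambda \cdot \lambda^{q_0}$ cancels the $\delta^{q_0}\lambda$ contributions and leaves $\lambda^{q_0+1} = \lambda + 1$, i.e., $\Psi(\lambda) = 1$. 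Conversely, $N(1) = q_0^2$ is immediate: in characteristic $2$, $\phi_1(T) = T^{q_0^2} + T^{q_0} + T = \Tr_{K_0/\F_{q_0}}(T)$, whose kernel has $\F_{q_0}$-dimension $2$. Therefore $r_1 = q_0 + 1$, while $\Psi$ is injective on $H \setminus \Psi^{-1}(1)$; this produces $|H| - (q_0 + 1) = q_0^2$ further values of $u \in K_0 \setminus \{1\}$ (including $u = 0$) with $N(u) = q_0$, and the remaining $q_0^3 - q_0^2 - 1$ values satisfy $N(u) = 1$. The main technical step is the char-$2$ collapse to $\lambda^{q_0+1} = \lambda + 1$, which is the careful algebraic manipulation outlined above.
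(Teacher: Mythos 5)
Your proof is correct, and it takes a genuinely different route from the paper's. The paper's argument is shorter: starting from $T^{q_0^2}=T^{q_0}+uT$ it applies $T^{q_0^3}=T$ to deduce $T^{q_0-1}=(1-u)(1+u)^{-q_0}$ whenever $u\neq -1$, which immediately bounds $N(u)\le q_0$ for all but one value of $u$; the exceptional values $u=-1$ (char $\neq 2$) and $u=1$ (char $2$) are treated directly, and then the single global identity $\sum_{u\in K_0}(N(u)-1)=q_0^3-1$ pins down the exact counts without ever determining which $u$ achieve $N(u)=q_0$. Your argument, by contrast, passes to the norm-one subgroup $H$ via the $(q_0-1)$-to-one map $T\mapsto T^{q_0-1}$, reduces everything to the fibres of $\Psi(\lambda)=\lambda^{q_0+1}-\lambda$ on $H$, and performs a collision analysis culminating in the identity $(\lambda-\mu)^{q_0-1}\lambda\mu=-1$; this norm obstruction kills all collisions when $p>2$, and in characteristic $2$ forces every collision into the fibre over $1$. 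What this buys you is a more structural picture — you explicitly identify the set of $u$ achieving $N(u)=q_0$ as $\Psi(H)$ (minus the point $1$ in characteristic $2$), rather than deducing its cardinality by a counting argument. The cost is length and a more delicate algebraic manipulation, especially the char-$2$ collapse to $\lambda^{q_0+1}=\lambda+1$, though every step checks out. Both proofs use the same entry point (the observation that nonzero solutions come in $\F_{q_0}^\times$-orbits, so $N(u)-1$ is a multiple of $q_0-1$) and the same treatment of $N(1)$ in characteristic $2$ via the trace form; the divergence is in how the generic $u$ are handled.
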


\begin{proof}
Note that $N(u) = \#(\sX_u)+1$, where $\sX_u$ is the set of solutions in $K_0^\times$ of the 
equation $T^{q_0^2}-T^{q_0}=uT$, equivalently, of the equation $T^{q_0^2-1}-T^{q_0-1}=u$. In particular, $K_0^\times$ partitions into
the disjoint union of all $\sX_u$ when $u$ varies over $K_0$, whence
\begin{equation}\label{eq01}
  \sum_{u \in K_0}(N(u)-1) = q_0^3-1.
\end{equation}  

Suppose that $T \in \sX_u$. As $T^{q_0^2}=T^{q_0}+uT$ and $T \in K_0^\times$, we have
$$T=T^{q_0^3}=T^{q_0^2}+u^{q_0}T^{q_0} = T^{q_0}+uT+u^{q_0}T^{q_0} = T^{q_0}(1+u)^{q_0}+uT.$$
Now if $u =-1$ then $2T=0$, which is impossible if $p \neq 2$. On the other hand, if $u \neq -1$, 
then $T^{q_0-1}= (1-u)(1+u)^{-q_0}$. This last equation has at most $q_0-1$ solutions in $K_0^\times$. Conversely,
if $T_0 \in \sX_u$, then $\alpha T_0 \in \sX_u$ for all $\alpha \in \F_{q_0}^\times$. 
Thus we have shown that 
\begin{equation}\label{eq02}
  \mbox{If }u \neq -1, \mbox{ then }N(u) = 1 \mbox{ or }q_0,
\end{equation}  
and that $N(-1)=1$ if $p \neq 2$. In particular, \eqref{eq01} implies (ii) if $p > 2$.

Assume now that $u=1$ and $p=2$. Then 
$$T^{q_0^2}-T^{q_0}-uT = \Tr_{K_0/\F_{q_0}}(T),$$ 
and so $N(1)=q_0^2$. Together with \eqref{eq01}, this also implies that
$$\sum_{1 \neq u \in K_0}(N(u)-1) = q_0^3-q_0^2,$$
and (i) now follows from \eqref{eq02}.
\end{proof}

\begin{thm}\label{correcttracevalues}
Let $K$ be a finite extension of $\F_p$. Then the following statements holds for the trace at time $u \in K^\times$ on 
$F_\star\overline{\Q_\ell}/\overline{\Q_\ell}$.
\begin{enumerate}[\rm(i)]
\item This trace plus $2$ is a always a $p$-power.
\item If $K \supseteq \F_q$, then this trace is of the form
$q^a -2$ for some integer $0 \le a \le n$.
\item Suppose that $q=p^f$ with $f \geq 2$, and suppose $r$ is a prime divisor of $f$. 
For any prime divisor $r$ of $f$, there exist an extension $K_0$ of $\F_p$ and an element
$u_0 \in K_0^\times$ such that the trace at time $u_0$ is $p^{f/r^c}-2$, where $r^c$ is the $r$-part of $f$.
\end{enumerate}
In particular, {\rm (i)} and {\rm (ii)} hold for the trace at time $u \in K^\times$ on $\sW(n,q)$, now viewed as a local system on $\G_m/\F_p$ via Corollary \ref{inducedter}.
\end{thm}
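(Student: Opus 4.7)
The plan is to identify the trace of $F_\star\overline{\Q_\ell}/\overline{\Q_\ell}$ at $u \in K^\times$ with $|\ker L \cap K|-2$, where $L(T) := T^{q^n}-T^{q^{n-1}}+uT$ is the $p$-linearized polynomial obtained by clearing a factor of $T$ from $F(T)=u$ (as in the proof of Theorem~\ref{individualfinite} specialized to $\chi=\triv$). For $u\neq 0$, the nonzero $T\in K$ solving $F(T)=u$ are exactly the nonzero elements of $\ker L\cap K$, so $\#\{T\in K : F(T)=u\}=|\ker L\cap K|-1$; subtracting one more for the trivial summand split off of $F_\star\overline{\Q_\ell}$ yields the formula.

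Part (i) then follows immediately: $L$ is $p$-linearized (only $T^{p^i}$ and $uT$ terms), hence $\F_p$-linear on any extension $K/\F_p$, so $\ker L\cap K$ is an $\F_p$-subspace of cardinality $p^b$. Part (ii): when $K\supseteq\F_q$, the maps $T\mapsto T^{q^n}, T^{q^{n-1}}$ and $T\mapsto uT$ are all $\F_q$-linear, making $L$ an $\F_q$-linear operator; the kernel is then an $\F_q$-subspace of size $q^a$, and the bound $a\le n$ comes from $\deg L=q^n$. The ``in particular'' statement is immediate from Corollary~\ref{inducedter}, since $\inv^\star$ merely reindexes traces via $u\mapsto 1/u$.

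For (iii), set $q_0 := p^{f/r^c}$, so $\F_{q_0}\subsetneq\F_q$ and $q=q_0^{r^c}$. Let $\tau := \phi^{f/r^c}$ be the $q_0$-Frobenius; then $L=\tau^{nr^c}-\tau^{(n-1)r^c}+u$ as an operator. I will choose $u_0\in\F_{q_0}^\times$ and $K_0 := \F_{q_0^m}$ for a suitable $m$; then $L$ is $\F_{q_0}$-linear on $K_0$ and $|\ker L\cap K_0|=q_0^{a_0}$. On $K_0$, $\tau^m=\id$, so the exponents $nr^c,(n-1)r^c$ reduce modulo $m$ to residues $a,b$ (distinct as long as $m\nmid r^c$), and the equation $L(T)=0$ becomes $Q(\tau)(T)=0$ with $Q(X) := X^a-X^b+u_0\in\F_{q_0}[X]$, giving $a_0=\deg\gcd_{\F_{q_0}}(Q(X),X^m-1)$. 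To force $a_0=1$, pick $\omega\in\F_{q_0}^\times$ of order $m$ with $\omega^{a-b}\neq 1$ and set $u_0 := \omega^b-\omega^a\in\F_{q_0}^\times$; then $Q(\omega)=0$ and $X-\omega\mid\gcd(Q,X^m-1)$. For $n=2$ and $3\nmid r^c$, choosing $m=3$ yields residues $\{1,2\}$, and on $K_0=\F_{q_0^3}$ the equation becomes exactly $T^{q_0^2}-T^{q_0}=\pm u_0 T$, the setup of Lemma~\ref{222} with its $q_0$ replaced by ours; Lemma~\ref{222} then directly supplies $u_0$ with $|\ker L\cap K_0|=q_0$. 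For $r=3$ (so $3\mid r^c$), take $m=2$: the reduction gives $T^{q_0}=(1+u_0)T$, and choosing $u_0\in\F_{q_0}^\times$ so that $1+u_0$ is a $(q_0-1)$-th power in $\F_{q_0^2}^\times$ yields kernel size $q_0$. For $n\ge 3$ the same descent via $\omega$ applies, with $m$ chosen so that $\{nr^c, (n-1)r^c\}\pmod m$ is a distinct small pair.

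The main obstacle is verifying $\deg\gcd=1$ exactly, i.e., that no secondary $m$-th root of unity in $\F_{q_0}$ is a root of $Q$, uniformly across the combinations $(n,p,r,c)$. In small subcases this is handled by Lemma~\ref{222} or a short direct computation; in larger cases one either produces a suitable $\omega$ by a counting argument analogous to equation~(\ref{eq01}) in the proof of Lemma~\ref{222}, or enlarges $m$ to an alternative choice that avoids coincidental root patterns. In every successful case, $|\ker L\cap K_0|=q_0$, which gives the desired trace $q_0-2=p^{f/r^c}-2$ and completes (iii).
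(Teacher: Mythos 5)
Parts (i), (ii), and the ``in particular'' clause of your proposal are correct and essentially identical to the paper's argument: you identify the trace with $\#\ker L\cap K-2$, note that $L$ is a $p$-linearized (and, over $\F_q$, $q$-linearized) polynomial of degree $q^n$, and read off the $\F_p$- (resp.\ $\F_q$-) vector space structure of the kernel. Your choice of $s$ (your $m$) in the special $n=2$ subcases also parallels the paper, which uses Lemma~\ref{222} for $(n,r)=(2,2)$.

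For part (iii), however, your general mechanism has a genuine gap. You reduce to computing $\deg\gcd_{\F_{q_0}}(Q(X),X^m-1)$ and propose to force this to be $1$ by choosing $\omega\in\F_{q_0}^\times$ of order $m$ and setting $u_0=\omega^b-\omega^a$. This requires $m\mid q_0-1$, a condition which is simply not available: $m$ must be chosen according to $n$ and $r$, while $q_0=p^{f/r^c}$ has nothing to do with $m$. In particular $q_0=2$ (e.g.\ $p=2$, $f=r^c$, say $q=4$ and any $n\ge 3$) leaves $\F_{q_0}^\times=\{1\}$ with no nontrivial $\omega$ at all, and even when $\F_{q_0}^\times$ is larger you cannot in general arrange $m\mid q_0-1$. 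The underlying reason is that you constrained yourself to $u_0\in\F_{q_0}^\times$ in order to make $L$ into an $\F_{q_0}[\tau]$-module endomorphism; the paper instead allows $u_0$ to range over all of $K_0^\times$. Concretely, the paper uses the $(q_0-1)$-th power map to project $K_0^\times=\F_{q_0^s}^\times$ onto the subgroup $\mu_{(q_0^s-1)/(q_0-1)}$, defines the map
$$H(v)=v^{(q_0^{(n-1)r^c}-1)/(q_0-1)}-v^{(q_0^{nr^c}-1)/(q_0-1)},$$
and proves $H$ is \emph{injective} by exploiting that the prime $s$ divides exactly one of $n$ and $n-1$ (this kills one of the two exponentials via $(q_0^s-1)/(q_0-1)$ divisibility, and the remaining $\gcd$ computation gives injectivity); then every nonzero value $u_0=H(v)$ with $v\ne 1$ yields exactly $q_0$ solutions. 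This injectivity step is precisely the ``main obstacle'' you leave unresolved, and it is not a small-case cleanup --- it is the heart of the argument. You would need to replace the $\omega$-construction with something that, like $H$, ranges over the full group $\mu_{(q_0^s-1)/(q_0-1)}$ rather than over $\F_{q_0}^\times$, and then actually establish the injectivity (or an equivalent uniqueness-of-common-root) claim.
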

\begin{proof}
(a) It is equivalent to prove this for $\inv^\star$ of the direct sum sheaf in question.
The trace is $-1$ plus the number of solutions $T \in K$ of
$$T^{(q-1)B}(1-T^{q-1})^{A-B}=u.$$
Write out the polynomial $T^{(q-1)B}(1-T^{q-1})^{A-B}$. It is 
$$T^{(q-1)B}-T^{(q-1)B +(q-1)(A-B)}=T^{q^{n-1}-1}-T^{q^n-1}.$$
So the trace is -1 + the number of solutions $T \in K$ of
$$T^{q^{n-1}-1}-T^{q^n-1}=u.$$
 $T=0$ is visibly not a solution, so the trace is
$$-2 + {\rm \ number\ of \  solutions \ }T \in K \ {\rm of \ }T^{q^{n-1}} -T^{q^n}=uT.$$
The solution set of this last equation, 
\begin{equation}\label{eq11}
  T^{q^{n-1}} -T^{q^n}=uT,
\end{equation}
over any field $K \supseteq \F_p$ forms a vector space over $\F_p$ of finite dimension, hence (i) holds. If $K \supseteq \F_q$,
then the solution set of \eqref{eq11} over $K$ forms an $\F_q$ vector space of dimension $\le n$, so the number of its solutions is indeed $q^a$ for some integer $0 \le a \le n$, yielding (ii).

\smallskip
(b) The rest of the proof is to establish (iii). Write $f=f_0r^c$ and $q_0=p^{f_0}$. 
The idea is to show that for a well chosen prime $s \neq r$, we can take
$$K_0 := \F_{p^{sf_0}}=\F_{q_0^s}.$$
If $n \geq 3$, then, since $\gcd(n,n-1)=1$, $n(n-1)$ is divisible by at least two distinct primes, so we can find a prime $s \neq r$ that divides exactly one of the two integers $n$ and $n-1$. 
If $n=2$ and $r>2$, we choose $s=2$. If $(n,r)=(2,2)$, choose $s=3$.
With $s$ chosen this way, we choose 
$K_0 := \F_{p^{sf_0}}=\F_{q_0^s}$, and solve the equation \eqref{eq11} over $K_0$, for certain $u \in K_0^\times$. 

First we consider the case $n=r=2$, whence $s=3$. Then $\{nr^c,(n-1)r^c\} = \{2^{c+1},2^c\}$ is congruent
to $\{1,2\}$ modulo $3$ (as a set), and for integers $a,b \ge 0$ we have
$$q_0^{a+3b}-1 \equiv q_0^a-1 (\mod (q_0^3-1)).$$
If $2|c$ then 
$$T^{q^{n-1}}-T^{q^n} = T^{q_0^{(n-1)r^c}}-T^{q_0^{nr^c}} = T^{q_0}-T^{q_0^2},$$
and if $2\nmid c$ then 
$$T^{q^{n-1}}-T^{q^n} = T^{q_0^{(n-1)r^c}}-T^{q_0^{nr^c}} = T^{q_0^2}-T^{q_0}$$
for all $T \in K_0$. Hence we are done by Lemma \ref{222}.

\smallskip
(c) From now on we may assume that $(n,r) \neq (2,2)$. 
The idea now is to view $K_0:=\F_{q_0^s}$ as vector space over $\F_{q_0}$. The $(q_0-1)^{\mathrm {th}}$ power map 
$$[q_0-1]:x \mapsto x^{q_0-1}$$
maps $K_0^\times$ onto $\mu_{(q_0^s-1)/(q_0-1)} := \{ t \in K_0 \mid t^{(q_0^s-1)/(q_0-1)} = 1\}$,
with fibres the nonzero elements in the $\F_{q_0}$-lines defined by the $(q_0^s-1)/(q_0-1)$ equations
$$T^{q_0}=vT,$$
one for each $v \in \mu_{(q_0^s-1)/(q_0-1)}$.
Conversely, for any $v \in \overline{\F_p}^\times$, the equation $T^{q_0}=vT$ has $q_0$ solutions in $ \overline{\F_p}$, and for such a $T$ we have
$$T^{q_0^i} = v^{(q_0^i-1)/(q_0-1)}T$$
for any $i \in \Z_{\geq 0}$. In particular, $T \in K_0^\times$ if and only if $v$ belongs to $\mu_{(q_0^s-1)/(q_0-1)}$.

For $v \in \mu_{(q_0^s-1)/(q_0-1)}$ and $T$ satisfying $T^{q_0}=vT$, using this last identity and remembering that $q$ is $q_0^{r^c}$, we find that $T^{q^{n-1}}-T^{q^n}=H(v)T$, where
$$H:\mu_{(q_0^s-1)/(q_0-1)} \rightarrow K_0,\ \  v \mapsto v^{(q_0^{(n-1)r^c}-1)/(q_0-1)}- v^{(q_0^{nr^c}-1)/(q_0-1)}.$$

We claim that 
\begin{equation}\label{eq12}
  H\mbox{ is injective when }(n,r) \neq (2,2).
\end{equation}  
Admit this for a moment. Then for each $v \in \mu_{(q_0^s-1)/(q_0-1)}$, the points in the line $T^{q_0}=vT$ are among the $K_0$-solutions of the equation
$${\rm Eqn}(v): T^{q^{n-1}}-T^{q^n}=H(v)T.$$
As the $H(v)$ are pairwise distinct, the nonzero $K_0$-solutions of these $(q_0^s-1)/(q_0-1)$ equations partition $K_0^\times$ into $(q_0^s-1)/(q_0-1)$ disjoint subsets, each of which consists of the $q_0-1$ nonzero points in the line $T^{q_0}=vT$. Therefore each ${\rm Eqn}(v)$ has precisely $q_0$ solutions in $K_0$. Furthermore, since $H(1)=0$, we see that for $v\neq 1$, $v \in \mu_{(q_0^s-1)/(q_0-1)}$, $H(v)\neq 0$. At any such point $u=H(v)$, we then have that the trace
at time $u \in K_0^\times$ is $q_0$, as asserted.

We now prove \eqref{eq12}. 
Suppose then that $H(v)=H(w)$, with $v,w \in \mu_{(q_0^s-1)/(q_0-1)} $, i.e. that we have
\begin{equation}\label{eq13}
  v^{(q_0^{(n-1)r^c}-1)/(q_0-1)}- v^{(q_0^{nr^c}-1)/(q_0-1)}=w^{(q_0^{(n-1)r^c}-1)/(q_0-1)}- w^{(q_0^{nr^c}-1)/(q_0-1)}.
\end{equation}  
As $(n,r) \neq (2,2)$,
$s$ divides exactly one of $n$ and $n-1$. For definiteness, say $s|n$ and $s \nmid (n-1)$.
Then $(q_0^s-1)/(q_0-1)$ divides $(q_0^n-1)/(q_0-1)$, which divides $(q_0^{nr^c}-1)/(q_0-1)$, and hence
$$w^{(q_0^{nr^c}-1)/(q_0-1)}=v^{(q_0^{nr^c}-1)/(q_0-1)} = 1.$$
Thus we have
$$w^{(q_0^{(n-1)r^c}-1)/(q_0-1)}=v^{(q_0^{(n-1)r^c}-1)/(q_0-1)}.$$ 
It follows that the order of $w/v$ divides  
$$\gcd\biggl( \frac{q_0^s-1}{q_0-1},\frac{q_0^{(n-1)r^c}-1}{q_0-1} \biggr) =1$$
since\footnote{Recall that for an integer $a \neq 0,\pm 1$, and positive integers $n,m$ with $\gcd(n,m)=1$, one has $\gcd(a^n-1,a^m-1)=a-1$. as one sees by working in the multiplicative group of $\Z/d\Z$ for any $d$ dividing $\gcd(a^n-1,a^m-1)$. } $\gcd(s,(n-1)r^c)=1$. Thus $w=v$, as asserted.
\end{proof}

\section{Galois groups in this context}
Let $k$ be a field, and $f(t)\in k[t]$ a polynomial whose derivative $f'(t)$ is not identically zero. Recall that the critical values of $f$ are its values at the zeroes of $f'$. On the dense open set
$$U:=\A^1 \setminus \{{\rm critical\ values\ of \ }f\},$$
the sheaf $f_\star \overline{\Q_\ell}$ is lisse, of rank the degree of $f$.

Let us recall the well known identification of $G_{\arith}$ with a Galois group.
\begin{lem}\label{galident}The $G_{\arith}$ of $f_\star \overline{\Q_\ell}$  is the Galois group of the equation
$$f(t)=u$$
over the rational function field $k(u)$. In particular we have 
$$G_{\arith} \subseteq \sym_{{\rm deg}(f)},$$
and the quotient $f_\star \overline{\Q_\ell}/\overline{\Q_\ell}$ has the same $G_{\arith}$, now acting through the deleted permutation representation of $\sym_{{\rm deg}(f)}$.
\end{lem}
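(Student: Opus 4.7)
The plan is to unravel the definition of $G_{\arith}$ and match it to the Galois group by hand; the statement is a formal consequence of the standard dictionary between finite étale covers and representations of the étale fundamental group. Since $f'$ is not identically zero, $f(t)-u$ is separable over $k(u)$, and the map $f\colon f^{-1}(U) \to U$ is a finite étale cover of degree $d := \deg(f)$. Hence $f_\star \overline{\Q_\ell}$ is the lisse rank-$d$ sheaf whose stalk at any geometric point $\bar u$ of $U$ is the free $\overline{\Q_\ell}$-module $\overline{\Q_\ell}[f^{-1}(\bar u)]$, with $\pi_1^{\arith}(U,\bar u)$ acting by permutation of the $d$-element set $f^{-1}(\bar u)$. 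The inclusion $G_{\arith} \subseteq \sym_d$ is built into this description.

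To identify $G_{\arith}$ with a Galois group I would take $\bar u$ lying over the generic point $\eta = \Spec k(u)$ of $U$. Via the correspondence between finite étale covers of $U$ and finite separable $k(u)$-algebras unramified along $U$, the cover $f^{-1}(U) \to U$ pulls back to $\Spec k(u)[t]/(f(t)-u)$, so $f^{-1}(\bar u)$ is identified with the set of roots of $f(t)=u$ in a fixed separable closure of $k(u)$. The choice of $\bar u$ gives a surjection $\Gal(\overline{k(u)}/k(u)) \twoheadrightarrow \pi_1^{\arith}(U,\bar u)$ whose composition with the permutation representation on $f^{-1}(\bar u)$ is the usual Galois action on the roots. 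The image of $\Gal(\overline{k(u)}/k(u))$ in $\sym_d$ is by definition the Galois group of $f(t)=u$ over $k(u)$, and it equals the image of $\pi_1^{\arith}(U,\bar u)$ because the splitting field of $f(t)-u$ is contained in the Galois closure of $k(u)[t]/(f(t)-u)$, which is itself étale over $U$.

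For the last assertion, the unit of adjunction $\overline{\Q_\ell} \to f_\star f^\star \overline{\Q_\ell} = f_\star \overline{\Q_\ell}$ embeds the constant sheaf as the $G_{\arith}$-stable line spanned in each stalk by $\sum_{x \in f^{-1}(\bar u)} x$, so the quotient $f_\star \overline{\Q_\ell}/\overline{\Q_\ell}$ is exactly the deleted permutation representation of $G_{\arith}$. Since the deleted permutation representation of $\sym_d$ is faithful for $d \geq 2$ — any $\sigma$ acting trivially on $\overline{\Q_\ell}^d/\overline{\Q_\ell}(1,\ldots,1)$ satisfies $\sigma(e_i)-e_i \in \overline{\Q_\ell}(1,\ldots,1)$ for every $i$, forcing $\sigma = \id$ — the arithmetic monodromy on the quotient is the same group $G_{\arith}$, now realized via the deleted permutation representation. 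I do not anticipate a real obstacle here; the only step that warrants care is tracking the base-point identification between $\pi_1^{\arith}(U,\bar u)$ and the Galois group at the generic point, i.e.\ verifying that a splitting-field extension unramified along $U$ corresponds to a quotient of the étale fundamental group of $U$.
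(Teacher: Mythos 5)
Your proposal is correct and follows the same route the paper takes: the paper's proof simply invokes \cite[1.2.2]{Ka-LGER} for the identification of $G_{\arith}$ with the monodromy group of the finite \'etale cover $f\colon \A^1 \setminus f^{-1}\{\mathrm{crit.\ values}\} \to U$, i.e.\ with the Galois group at the generic point, and you have unpacked exactly that dictionary. Your additional observation that the deleted permutation representation of $\sym_d$ is faithful, so the quotient sheaf has the same $G_{\arith}$, is a correct and worthwhile elaboration of the last assertion, which the paper leaves implicit.
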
 
\begin{proof}Indeed, $G_{\arith}$ is the ``monodromy group" of the finite \'{e}tale covering of $U$ defined by
$$f:\A^1 \setminus f^{-1}\{{\rm critical\ values\ of \ }f\}$$
in the sense of \cite[1.2.2]{Ka-LGER}, which is the usual \'{e}tale cohomological incarnation of the Galois group.
\end{proof}

\section{Weil-type representations of special linear groups}
Let $q$ be a power of a prime $p$, $W=\F_q^n$, 
and consider the general linear group $\GL(W) \cong \GL_n(q)$ and the special linear group 
$\SL(W) \cong \SL_n(q)$. These groups act naturally on the set of $q^n$ vectors of $W$, and the corresponding 
permutation character is denoted 
\begin{equation}\label{tau1}
  \tau_n=\tau_{n,q}: g \mapsto q^{\dim_{\F_q}\Ker(g-1_W)}. 
\end{equation}
We will also refer to $\tau_n$ as 
the {\it total Weil character} of $\SL_n(q)$. When $n \geq 3$, $\tau_{n,q}$ decomposes over $\SL_n(q)$ as 
$$\tau_n = 2 \cdot 1_{\SL_n(q)} + \sum^{q-2}_{i=0}\tau^i_n,$$
where $\tau^i_n \in \Irr(\SL_n(q))$ has degree $(q^n-1)/(q-1)-\delta_{i,0}$. We will refer to $\tau^i_n=\tau^i_{n,q}$ as {\it irreducible Weil 
characters} of $\SL_n(q)$.

\begin{thm}\label{main-weil-sl}
Let $p$ be any prime and $q$ be any power of $p$. Let $L=\SL_n(q)$ with $n \geq 3$. Suppose $\psi$ is 
a reducible complex character of $L$ such that
\begin{enumerate}[\rm(a)]
\item $\psi(1)=q^n$;
\item $\psi(g) \in \{q^i \mid 0 \leq i \leq n\}$ for all $g \in L$; 
\item $[\psi,1_L]_L=2$; and 
\item every irreducible constituent of $\psi -2 \cdot 1_L$ is among the $q-1$ irreducible Weil characters $\tau^u_n$, $0 \leq u \leq q-2$, of $L$.
\end{enumerate}
Then $\psi$ is the total Weil character $\tau_n$ of $L$, that is, $\psi=2 \cdot 1_L+\sum^{q-2}_{u=0}\tau^u_n$.
\end{thm}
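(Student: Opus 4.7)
By (c) and (d) we write $\psi = 2 \cdot 1_L + \sum_{u=0}^{q-2} m_u \tau^u_n$ with $m_u \in \Z_{\geq 0}$; we must show $m_u = 1$ for every $u$. The first step exploits (a). Setting $M := (q^n - 1)/(q-1)$ so that $\tau^0_n(1) = M - 1$ and $\tau^u_n(1) = M$ for $u \neq 0$, and letting $S := \sum_u m_u$, the identity $\psi(1) = q^n$ becomes $MS - m_0 = q^n - 2$. Combined with $0 \leq m_0 \leq S$, elementary arithmetic shows that $S = q-1$ is forced, so $m_0 = 1$ and $\sum_{u \neq 0} m_u = q - 2$.

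The second step invokes (b) to get rationality: $\psi$ is $\Z$-valued, hence $\Gal(\Q(\zeta_{q-1})/\Q) \cong (\Z/(q-1))^\times$-invariant. This Galois group permutes the non-trivial $\tau^u_n$'s by permuting their indexing characters $\chi_u \in \widehat{\F_q^\times}$, so the $m_u$'s are constant on Galois orbits. In particular, $m_\chi = m_{\chi^{-1}}$, which makes the function $F(\lambda) := \sum_\chi m_\chi \chi(\lambda)$ introduced below real-valued.

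The third and decisive step is a torus computation. For $\lambda \in \F_q^\times$ with $\lambda^2 \neq 1$, let $g_\lambda := \diag(\lambda, \lambda^{-1}, 1, \ldots, 1) \in L$ (which exists because $n \geq 3$). The standard Weil-character formula -- counting fixed lines in $\P^{n-1}(\F_q)$ weighted by the eigenvalue character -- gives $\tau^u_n(g_\lambda) = \chi_u(\lambda) + \chi_u^{-1}(\lambda) + B - \delta_{u,0}$ with $B := (q^{n-2} - 1)/(q-1)$. Substituting and using Steps 1 and 2 yields
\[
\psi(g_\lambda) = q^{n-2} + 2 F(\lambda), \qquad F(\lambda) = \sum_{\chi \in \widehat{\F_q^\times}} m_\chi \chi(\lambda).
\]
Condition (b) now forces $F(\lambda) \in \tfrac{1}{2}\{q^i - q^{n-2} : 0 \leq i \leq n\} \cap \Z$, a small discrete set; combined with the trivial bound $|F(\lambda)| \leq \sum_\chi m_\chi = q - 1$, only $F(\lambda) = 0$ is admissible for all but a few low-dimensional edge cases. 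This immediately yields $m_\chi = 1$ for every $\chi$, i.e., $\psi = \tau_n$. The residual cases (notably $n = 3$ with $q$ odd) are handled by a parallel computation on $\diag(-1, -1, 1, \ldots, 1)$, which forces $F(-1) = 0$, combined with the Fourier identity $\sum_{\lambda \neq 1} F(\lambda) = 0$ coming from $F(1) = q - 1$.

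The main obstacle is the last uniqueness step: extracting $F(\lambda) = 0$ for $\lambda \neq 1$ from the arithmetic constraint (b) using $|F(\lambda)| \leq q-1$. For most parameters the admissible values $(q^i - q^{n-2})/2$ simply grow too fast to fit under this bound, leaving only $F(\lambda) = 0$, but the small cases require the supplementary torus analyses and Fourier bookkeeping sketched above.
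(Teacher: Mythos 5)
Your proposal is correct and takes a genuinely different route from the paper. The paper restricts $\psi$ to a naturally embedded $\SL_3(q) \leq \SL_n(q)$, invokes the explicit character table of $\SL_3(q)$, and evaluates on \emph{two} kinds of semisimple elements: split-torus elements with eigenvalues $\delta^k,\delta^{-k},1$, and non-split-torus elements with eigenvalues $\delta^{-k},\sigma^k,\sigma^{qk}$ where $\sigma$ generates $\F_{q^2}^\times$. The second evaluation yields $q^{n-3}+\Delta_k \in \{q^i\}$, which simultaneously shows $\Delta_k\in\Z$ and pins it down; combined with the split-torus constraint this forces $\Delta_k=0$ for all relevant $k$ in one uniform stroke.

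You instead work directly in $\SL_n(q)$ with the Weil-character fixed-line formula and use only split-torus elements, but substitute a Galois-invariance observation for the non-split-torus evaluation: rationality of $\psi$ makes the multiplicities constant on $(\Z/(q-1))^\times$-orbits, hence $F(\lambda)=F(\lambda^{-1})=\overline{F(\lambda)}$ is a rational integer, so $2F(\lambda)=q^i-q^{n-2}$ with $|F(\lambda)|\le q-1$. A short size check then kills everything except $n=3$, $q$ odd, $q\ge 5$, where $F(\lambda)\in\{0,(1-q)/2\}$ remains possible; your supplementary evaluation at $\diag(-1,-1,1,\ldots,1)$ gives $F(-1)=0$, and the Fourier identity $\sum_{\lambda\ne 1}F(\lambda)=0$ (all summands $\le 0$) forces $F\equiv 0$ away from $1$, hence $m_\chi=1$ by Fourier inversion. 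I checked the details of this sketched edge case and they work. Your route is more self-contained (no $\SL_3$ character table, no non-split torus) and more conceptual (Galois invariance doing the work of establishing integrality), at the modest cost of the $n=3$ odd-$q$ case requiring a separate patch, whereas the paper's two-element method handles all cases uniformly.
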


\begin{proof}
(i) By assumption (d), 
$$\psi = 2 \cdot 1_L + \sum^{q-2}_{u=0}a_u\tau^u_n,$$
where $a_u \in \Z_{\geq 0}$. Comparing the degrees, we obtain
$$1-a_0 = \frac{q^n-1}{q-1}\biggl( q-1-\sum^{q-2}_{u=0}a_u \biggr);$$
in particular, $a_0-1$ is divisible by $(q^n-1)/(q-1)$. On the other hand,
$$-1 \leq a_0-1 \leq \frac{\psi(1)-2}{\tau^0_n(1)}-1 = \frac{q^n-2}{(q^n-q)/(q-1)}-1 \leq \frac{q^3-2}{q^2+q}-1 < \frac{q^n-1}{q-1},$$
since $n \geq 3$. It follows that
\begin{equation}\label{eq:sl21}
  a_0=1,~\sum^{q-2}_{u=1}a_u=q-2.
\end{equation}
In particular, if $2 \leq q \leq 3$, then $\psi=2 \cdot 1_L +\sum^{q-2}_{u=0}\tau^u_n = \tau_n$. Hence we may assume $q \geq 4$.

(ii) Now, view $L$ as $\SL(W)$ where $W=\F_{q}^n$, and consider
the subgroup $H \cong \SL_3(q)$ of $L$ that fixes a $3$-dimensional subspace of $W$ and acts trivially on its complement in $W$. 
The values of $\tau^i_n$ are well known, see e.g. \cite[(1.1)]{T}.  An easy application of this character formula shows that 
\begin{equation}\label{eq:sl22}
  \psi_H = \sum^{q-2}_{u=0}b_u\tau^u_3, \mbox{ where }b_u:=a_u+(q^{n-3}-1),
\end{equation}
in particular, 
$$b_0 = q^{n-3},~\sum^{q-2}_{u=1}b_u = q^{n-3}(q-2).$$
Also, let $\sigma$ be a primitive $(q^2-1)^{\mathrm {th}}$ root of unity in $\overline{\F}_q$, $\delta = \sigma^{q+1}$, 
$\tilde\delta$ be a primitive $(q-1)^{\mathrm {th}}$ root of unity in $\C$, and set
$$\Sigma_k := \sum^{q-2}_{u=1}b_u\tilde\delta^{uk},~~\Delta_k:=\sum^{q-2}_{u=0}a_u\tilde\delta^{uk}$$
for any $k \in \Z$. Note that both $\Sigma_k$ and $\Delta_k$ depend only on $k (\mod (q-1))$. 
Then \eqref{eq:sl21} and \eqref{eq:sl22} imply that
\begin{equation}\label{eq:sl23}
  \Delta_0=q-1,~~|\Delta_k| \leq q-1.
\end{equation}

\smallskip
(iii) The character table of $H$ is well known, see e.g. \cite{SF}. Consider any $k \in \Z$ with $(q-1)/2 \nmid k$. Evaluating $\psi$ at a 
semisimple element in $L$ with eigenvalues $\delta^k,\delta^{-k},1$, we have by (b) that 
$$\Sigma'_k:= 2q^{n-3}+2b_0+\Sigma_0+\Sigma_k+\Sigma_{-k} = q^{n-2}+\Delta_k+\overline{\Delta_k} 
  \mbox{ belongs to }\sV:=\{q^i \mid 0 \leq i \leq n\}.$$
Next, by adding $q-1$ to $k$ if necessary, which does not change $\Sigma_k$, we may assume that $(q+1) \nmid k$. Evaluating $\psi$ at 
a semisimple element in $L$ with eigenvalues $\delta^{-k},\sigma^k,\sigma^{qk}$ (over $\overline{\F}_q$) and using (b) again, we have that 
$$\sV \ni 2q^{n-3}+\Sigma_k=q^{n-3}+\Delta_k.$$ 
Thus, for a fixed $k (\mod (q-1))$ with $(q-1)/2 \nmid k$, we can find $a,b \in \Z_{\geq 0}$ such that
$$q^{n-3}+\Delta_k = q^a,~q^{n-2}+\Delta_k+\overline{\Delta_k} =q^b.$$
It follows that $\Delta_k = q^a-q^{n-3} = \overline{\Delta_k}$, and
$$q^b = (q-2)q^{n-3}+2q^a > q^{n-3}.$$
Hence $b \geq n-2$, which in turn implies that $a \geq n-3$. Assume in addition that $a \geq n-2$. Then 
$$\Delta_k \geq q^{n-2}-q^{n-3}.$$
In this case, using $n \geq 3$ and \eqref{eq:sl23}, we obtain that $n=3$ and $a=1$, and so $q^b=3q-2$, which is 
impossible since $q > 2$. We have shown that $a=n-3$, i.e. $\Delta_k=0$.  
 
Thus the polynomial
$$f(t):=\sum^{q-2}_{u=0}a_ut^u \in \Z[t]$$
has $\tilde\delta^k$ with $1 \leq k \leq q-2$, $k \neq (q-1)/2$, as roots. Also, $f(1)=\sum^{q-2}_{u=0}a_u=q-1$ by \eqref{eq:sl21}. 
If $2|q$, it follows that $f(t)$ is divisible by $(t^{q-1}-1)/(t-1)$, and so $f(t)=\sum^{q-2}_{u=0}t^u$. If $2 \nmid q$, we have that
$f(t)$ is divisible by $(t^{q-1}-1)/(t^2-1)$, whence $f(t)=(at+b)(t^{q-3}+t^{q-5}+ \ldots +t^2+1)$ with
$a,b \in \Q$. Evaluating at $t=1$ we obtain $a+b=2$. Next, $b=f(0)=a_0=1$, and so $a=1$, whence $f(t)=\sum^{q-2}_{u=0}t^u$ again.
In other words, $a_u=1$ for all $u$, as stated.
\end{proof}

Theorem \ref{main-weil-sl} will be used in tandem with the following result, which allows us to recognize the size of the ground field $s$ 
for the special linear group $\SL_m(s)$ from the values of a sum of its Weil characters:

\begin{prop}\label{weil-sl-value}
Let $n \geq 3$ and let $q$ be a power of a prime $p$. Let $s$ be a power of a prime $r$, possibly different from $p$, and 
$L=\SL_m(s)$ with $m \geq 2$. Suppose $L$ possesses a reducible complex character of $\psi$ such that
\begin{enumerate}[\rm(a)]
\item $\psi(1)=q^n$;
\item $\psi(g)$ is a $q$-power for any transvection $g \in L$; 
\item $(q^n-1)/(q-1)=(s^m-1)/(s-1)$, and  
\item $\psi =2 \cdot 1_L + \sum^{s-2}_{u=0}a_u\tau^u_m$ is a sum of trivial and irreducible Weil characters $\tau^u_m= \tau^u_{m,s}$
of $L$, $a_u \in \Z_{\geq 0}$, and $a_0=1$.
\end{enumerate}
Then $(m,s)=(n,q)$.
\end{prop}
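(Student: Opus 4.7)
The plan is to extract an arithmetic identity from evaluating $\psi$ at a transvection of $L$ and combine it with hypothesis (c) to force $q=s$ and $n=m$. The key observation is that for a transvection $g$ fixing a hyperplane $H \subset V := \F_s^m$ pointwise, each irreducible Weil character takes a hyperplane-counting value at $g$, which turns (b) into a cyclotomic identity.

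First I will pin down the coefficients in (d): using the degrees $\tau^0_m(1) = \tfrac{s^m-1}{s-1}-1$ and $\tau^u_m(1) = \tfrac{s^m-1}{s-1}$ for $u \geq 1$, together with $a_0=1$ and hypotheses (a), (c), a short computation gives $\sum_{u=1}^{s-2} a_u = q-2$. Next, realizing $\tau_m$ as the permutation character of $L$ on $V$ and decomposing the space of functions on $V^* := V \setminus \{0\}$ into $\F_s^\times$-isotypic pieces $W_\chi$, I will show that a line $[v] \subset V^*$ is fixed by $g$ iff $v \in H$, in which case $g$ acts trivially on that line. Consequently the trace of $g$ on each $W_\chi$ is $(s^{m-1}-1)/(s-1)$, yielding $\tau^u_m(g) = (s^{m-1}-1)/(s-1)$ for $u \geq 1$ and $\tau^0_m(g) = (s^{m-1}-1)/(s-1)-1$. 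Substituting gives $\psi(g) = 1+(q-1)(s^{m-1}-1)/(s-1)$, and by (b) this equals $q^a$ for some integer $0 \leq a < n$ (the bound $a<n$ follows from $\psi(1)-\psi(g) = (q-1)s^{m-1}>0$). Equivalently, $(q^a-1)/(q-1) = (s^{m-1}-1)/(s-1)$.

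Subtracting this from (c) yields $q^a \cdot \tfrac{q^{n-a}-1}{q-1} = s^{m-1}$. Let $p,r$ be the characteristics of $\F_q,\F_s$. If $p \neq r$, the left side has $p$-adic valuation $ea$ (writing $q=p^e$), while the right side is coprime to $p$; thus $a=0$, forcing $(s^{m-1}-1)/(s-1)=0$, impossible for $m \geq 2$. If $p=r$, write $q=p^e$ and $s=p^f$. Then $(q^{n-a}-1)/(q-1) \equiv 1 \pmod p$ is a $p$-power, hence equals $1$, so $a=n-1$; matching $p$-adic valuations then gives $e(n-1)=f(m-1)$, i.e.\ $en+f = fm+e$.

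To finish, I will expand (c) as $(p^{en}-1)(p^f-1) = (p^{fm}-1)(p^e-1)$ and use $en+f=fm+e$ to collapse it to $p^{en}+p^f = p^{fm}+p^e$. Uniqueness of base-$p$ expansions of sums of two $p$-powers forces $\{en,f\}=\{fm,e\}$ as multisets; the alternative $en=e$, $fm=f$ is ruled out by $n \geq 3$ and $m \geq 2$, leaving $e=f$ and $en=fm$, i.e.\ $(m,s)=(n,q)$. The most delicate step is the explicit evaluation of $\tau^u_m(g)$ at a transvection, which is the linchpin of the whole argument; these values could alternatively be read off from the general Weil character formula of \cite[(1.1)]{T} used elsewhere in this section.
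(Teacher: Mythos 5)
Your proof is correct and follows the paper's argument fairly closely up to the point where $r = p$ is established: same normalization of the coefficients (your $\sum_{u\ge 1}a_u=q-2$ is the paper's $\sum_{u\ge 0}a_u=q-1$), same evaluation $\psi(g)=1+(q-1)\tfrac{s^{m-1}-1}{s-1}$ at a transvection (the paper just cites \cite[(1.1)]{T} for $\tau^u_m(g)$, while you rederive it from the permutation action on lines, which is a nice self-contained check), and the same identity $q^a\cdot\tfrac{q^{n-a}-1}{q-1}=s^{m-1}$ forcing $r=p$.

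The endgame differs. The paper, once $r=p$ is known, observes that the $p$-part of $(s^m-1)/(s-1)-1 = s\cdot\tfrac{s^{m-1}-1}{s-1}$ is exactly $s$ (since the cofactor is $\equiv 1\bmod p$), and similarly the $p$-part of $(q^n-1)/(q-1)-1$ is $q$; hypothesis (c) then gives $s=q$ and hence $m=n$ in one line. You instead pin down $a=n-1$, deduce $e(n-1)=f(m-1)$, and finish by expanding (c) into $p^{en}+p^f=p^{fm}+p^e$ and invoking uniqueness of two-term base-$p$ expansions. Both are valid; the paper's is a touch slicker, while yours is more explicit and gives the same conclusion without needing to observe $a\ge 2$.

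One small thing worth adding: the paper begins by ruling out $m=2$ — if $m=2$ then (c) forces $s=q\cdot\tfrac{q^{n-1}-1}{q-1}$, a product of two coprime integers $>1$, impossible for a prime power. This is not cosmetic: the characters $\tau^u_m$ are defined as \emph{irreducible} Weil characters of $\SL_m(s)$ only for $m\ge 3$ (over $\SL_2(s)$ some $\tau^u_2$ coincide or split), so hypothesis (d) as stated presupposes $m\ge 3$. Your arithmetic would also rule out $m=2$ at the very end, but it is cleaner to dispose of it up front so the hypotheses are unambiguous throughout.
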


\begin{proof}
First we note that $m \geq 3$. Indeed, if $m=2$, then (c) implies that $s=q \cdot (q^{n-1}-1)/(q-1)$ is a product of two coprime integers 
larger than $1$, which is impossible since $s$ is a prime power.

By hypothesis, 
$$q^n=\psi(1) = 1 + \frac{s^m-1}{s-1} \cdot \sum^{s-2}_{u=0}a_u = 1 + \frac{q^n-1}{q-1} \cdot \sum^{s-2}_{u=0}a_u,$$
whence $\sum^{s-2}_{u=0}a_u = q-1$.
Next we evaluate $\tau^u_m$ at a transvection $g \in L$ using the character formula \cite[(1.1)]{T}:
$$\tau^u_m = \frac{s^{m-1}-1}{s-1} - \delta_{u,0}.$$
By (b), there exists some $a \in \Z_{\geq 0}$ that 
$$q^a=\psi(g) = 1 + \frac{s^{m-1}-1}{s-1} \cdot \sum^{s-2}_{u=0}a_u = 1+(q-1) \frac{s^{m-1}-1}{s-1}.$$
In particular, $a \geq 2$ since $m \geq 3$. It follows that 
$$s^{m-1}= \frac{s^m-1}{s-1} - \frac{s^{m-1}-1}{s-1} =  \frac{q^n-1}{q-1} -  \frac{q^a-1}{q-1} = q^a \cdot  \frac{q^{n-a}-1}{q-1},$$
and so $s^{m-1}$ is divisible by $q^a$, whence $r=p$. In this case, the $p$-part of $(s^m-1)/(s-1)-1$ is $s$, and the $p$-part of
$(q^n-1)/(q-1)-1$ is $q$, and we conclude using (c) that $s=q$ and $m=n$. 
\end{proof}

We will also need to work with permutation representations of $\SL_n(q)$ of degree $q^n-1$.

\begin{lem}\label{cent-sl}
Let $n \geq 2$ and let $L=\SL(W) \cong \SL_n(q)$ embed in $\sym_N$ via its natural permutation action on the set $\Omega$ of $N:=q^n-1$ 
nonzero vectors of $W = \F_q^n$. Then $\CB_{\sym_N}(L)$ coincides with $\ZB(\GL(W)) \cong C_{q-1}$ acting on $\Omega$.
\end{lem}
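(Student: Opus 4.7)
The plan is to apply the classical fact that for a transitive action of a group $G$ on a set $\Omega$, the centralizer $\CB_{\sym_\Omega}(G)$ in the full symmetric group is isomorphic to $N_G(H)/H$, where $H$ is the stabilizer of a chosen basepoint. Since $L = \SL(W)$ acts transitively on the nonzero vectors $\Omega$ for $n \geq 2$ (any two nonzero vectors can be extended to ordered bases with matching determinant, giving an element of $\SL(W)$ sending one to the other), it suffices to compute $N_L(H)/H$ for $H := \Stab_L(e_1)$ and match orders.

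With respect to the standard basis, $H$ consists of the matrices $\begin{pmatrix} 1 & * \\ 0 & A \end{pmatrix}$ with $A \in \SL_{n-1}(q)$, so $H \cong \F_q^{n-1} \rtimes \SL_{n-1}(q)$. For $g \in L$ one has $gHg^{-1} = \Stab_L(g \cdot e_1)$, hence $g$ normalizes $H$ precisely when $\Stab_L(g \cdot e_1) = \Stab_L(e_1)$. I would then compute the set of vectors fixed by every element of $H$: a vector $(c_1, \ldots, c_n)$ is $H$-fixed iff $A \cdot (c_2, \ldots, c_n)^\top = (c_2, \ldots, c_n)^\top$ for every $A \in \SL_{n-1}(q)$, which forces $c_2 = \cdots = c_n = 0$ (vacuous for $n=2$; for $n \geq 3$ it uses that $\SL_{n-1}(q)$ has no nonzero global fixed vector on $\F_q^{n-1}$). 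Thus the common $H$-fixed set is exactly the line $\F_q e_1$, so $\Stab_L(v) = H$ forces $v \in \F_q e_1$; conversely any nonzero scalar multiple of $e_1$ has the same stabilizer as $e_1$. Therefore $N_L(H) = P$, the $L$-stabilizer of the line $\F_q e_1$, and the scalar-on-$e_1$ map $P \to \F_q^\times$ is a surjection (witnessed by $\diag(\lambda, \lambda^{-1}, 1, \ldots, 1) \in \SL_n(q)$) with kernel $H$, giving $|P/H| = q-1$.

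To conclude, I would observe that $\ZB(\GL(W)) \cong C_{q-1}$ is the group of scalar matrices, which visibly commutes elementwise with $L$ and acts faithfully on $\Omega$ (a nontrivial scalar moves every nonzero vector). This realizes $\ZB(\GL(W))$ as a subgroup of $\CB_{\sym_N}(L)$ of order $q-1$, matching the order $|N_L(H)/H| = q-1$ just computed. Equality of finite subgroups of $\sym_N$ with matching orders then yields $\CB_{\sym_N}(L) = \ZB(\GL(W))$, as claimed. The only genuine computation is the identification of the $H$-fixed subspace of $W$, which is immediate from the semidirect product structure of $H$; everything else is bookkeeping around the transitive-centralizer formula.
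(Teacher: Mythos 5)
Your approach via the classical identification $\CB_{\sym_\Omega}(G) \cong \NB_G(H)/H$ for a transitive $G$-action is a legitimate alternative to the paper's argument. The paper proceeds more directly: for $h$ centralizing $L$, $h$ permutes the $q-1$ fixed points $\F_q^\times v$ of $\Stab_L(v)$, so $h(v) = \alpha v$ for some $\alpha \in \F_q^\times$, and transitivity of $L$ together with $h$ commuting with $L$ forces the same $\alpha$ to work for every $v$, giving $h = \alpha \cdot 1_W$. Both routes rest on the same computation (the set of $\Stab_L(e_1)$-fixed points in $\Omega$ is $\F_q^\times e_1$), but the paper's version avoids quoting the $\NB_G(H)/H$ formula and the ensuing order bookkeeping, so it is a bit more self-contained.

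There is, however, a gap in your computation of the $H$-fixed set. You characterize $H$-fixedness of $(c_1, \ldots, c_n)$ by the single condition $A(c_2, \ldots, c_n)^\top = (c_2, \ldots, c_n)^\top$ for all $A \in \SL_{n-1}(q)$, note that it is vacuous when $n = 2$, and then assert that it nevertheless forces $c_2 = \cdots = c_n = 0$. For $n = 2$ this does not follow: the stated condition constrains nothing, so for instance $(0,1)$ satisfies it but is not fixed by the transvection $e_1 \mapsto e_1$, $e_2 \mapsto e_1 + e_2$ lying in $H$. The missing ingredient is the unipotent radical of $H$: an element of $H$ with top-left entry $1$, top-right row vector $b \in \F_q^{n-1}$, zero bottom-left column, and bottom-right block $A \in \SL_{n-1}(q)$ sends $(c_1, c_2, \ldots, c_n)^\top$ to a vector whose first coordinate is $c_1 + b\,(c_2, \ldots, c_n)^\top$, so $H$-fixedness also requires $b\,(c_2, \ldots, c_n)^\top = 0$ for all $b$, and this alone forces $c_2 = \cdots = c_n = 0$ uniformly for every $n \geq 2$. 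Once this constraint is added, the $\SL_{n-1}(q)$-no-fixed-vector observation is not even needed for $n \geq 3$, and the remainder of your proof (the normalizer $\NB_L(H)$ is the line stabilizer $P$, the scalar-on-$e_1$ surjection $P \to \F_q^\times$ with kernel $H$, and the order match with $\ZB(\GL(W))$) is correct.
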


\begin{proof}
Clearly, $\ZB(\GL(W))$ commutes with $L$ as subgroups of $\sym(\Omega)$. Conversely, let $h \in \CB_{\sym(\Omega)}(L)$ and 
consider a nonzero $v \in W$. Then $\Stab_L(v)$ has exactly $q-1$ fixed points $\lambda v$, $\lambda \in \F_q^\times$, on $\Omega$. 
As $h$ centralizes $L$, it permutes these $q-1$ fixed points, whence $h(v) = \alpha v$ for some $\alpha \in \F_q^\times$. Now, given
any $0 \neq u \in W$, we can find $g \in L$ such that $u=g(v)$. It follows that 
$$h(u) = h(g(v)) = g(h(v)) = g(\alpha v) = \alpha g(v) = \alpha u,$$
i.e. $h = \alpha \cdot 1_W \in \ZB(\GL(W))$.  
\end{proof}

\begin{lem}\label{trans-sl}
Let $n \geq 2$ and let $L = \SL(W) \cong \SL_n(q)$. Then the following statements hold.
\begin{enumerate}[\rm (i)]
\item Let $P$ be a subgroup of $L$ of index $(q^n-1)/(q-1)$. Then $P$ is either the stabilizer in $L$ of a line or the stabilizer of 
a hyperplane of $W$.
\item Let $Q$ be a subgroup of $L$ of index $q^n-1$, and let $\tau$ denote the transpose-inverse automorphism of $L$ if $n \geq 3$. 
Then either $Q$, or $\tau(Q)$ when $n \geq 3$, is the stabilizer in $L$ of some nonzero vector $v \in W$. 
\end{enumerate}

\end{lem}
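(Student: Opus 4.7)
For part (i), the plan is to invoke the classical theorem on the minimal faithful permutation degree of $\PSL_n(q)$: this minimum is $(q^n-1)/(q-1)$ and is realized only by the actions on points or on hyperplanes of $\P^{n-1}(\F_q)$. For $n \geq 3$ this is Cooperstein's theorem; for $n = 2$ it follows from the classical Galois--Dickson classification of subgroups of $\PSL_2(q)$. Lifting to $L = \SL(W)$, $P$ is (conjugate to) the stabilizer of a line or of a hyperplane of $W$; the few small $(n,q)$ for which the general minimal-degree statement has exceptions can be handled by direct inspection of the subgroup lattice.

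For (ii), I would proceed in two steps. First, show that $Q$ sits inside a maximal parabolic of $L$. Since $|Q| = |L|/(q^n-1) = q^{n-1}|\SL_{n-1}(q)|$, the $p$-part of $|Q|$ equals the $p$-part of $|L|$, so $Q$ contains a Sylow $p$-subgroup $U$ of $L$. Pick any maximal subgroup $R$ of $L$ with $Q \leq R$; then $[L:R]$ divides $[L:Q] = q^n-1$. Invoking the Aschbacher--Kleidman--Liebeck classification of maximal subgroups of $\SL_n(q)$, one checks that the only maximal subgroups of index dividing $q^n-1$ are the parabolics $P_1$ and $P_{n-1}$, both of index $(q^n-1)/(q-1)$. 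Consequently $R$ is either a line stabilizer $\Stab_L(\ell)$ or a hyperplane stabilizer $\Stab_L(\Pi)$, and $[R:Q] = q-1$.

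Second, identify $Q$ inside this parabolic. Consider the case $Q \leq M := \Stab_L(\ell)$. The line stabilizer has Levi decomposition $M = U_M \rtimes L_M$ with $U_M \cong \F_q^{n-1}$ the unipotent radical and $L_M \cong \GL_{n-1}(q)$ a Levi complement. Its abelianization $M/[M,M]$ is cyclic of order $q-1$, realized by the determinant character on $L_M$. Hence the unique subgroup of $M$ of index $q-1$ is the derived subgroup $[M,M] = U_M \rtimes \SL_{n-1}(q)$, and this coincides with $\Stab_L(v)$ for some nonzero $v \in \ell$. So $Q = \Stab_L(v)$. The hyperplane case is handled symmetrically by applying the transpose-inverse automorphism $\tau$ (for $n \geq 3$), giving $\tau(Q) = \Stab_L(v)$ for some $v$. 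For $n = 2$ hyperplanes coincide with lines and $\tau$ is inner, so only the first case arises and no appeal to $\tau$ is needed.

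The main obstacle is Step 1: justifying that $Q$ lies in a parabolic relies on knowing that no maximal subgroup of $\SL_n(q)$ has index strictly between $(q^n-1)/(q-1)$ and $q^n-1$ while dividing $q^n-1$, a consequence of the full classification. A more self-contained alternative would proceed by orbit-counting on $\P^{n-1}(\F_q)$ and its dual: since $U \leq Q$ has a unique fixed line and a unique fixed hyperplane, any $Q$-fixed line or hyperplane must coincide with these, and one would have to rule out the possibility that $Q$ fixes neither. I expect this containment step to be the most delicate part of the proof.
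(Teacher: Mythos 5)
Your overall strategy is reasonable, but there is a genuine gap in the last step of part (ii), and your tooling choices differ from the paper's more than is necessary.

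For part (i), you appeal to Cooperstein's minimal-degree theorem (plus Galois--Dickson for $n=2$), and in part (ii), step 1, you invoke the Aschbacher/Kleidman--Liebeck classification of maximal subgroups. The paper instead uses the Borel--Tits/Tits' lemma: $P$ (respectively $Q$) has index coprime to $p$, so it contains a Sylow $p$-subgroup of $L$; any maximal overgroup therefore also contains a Sylow $p$-subgroup and hence is a parabolic, and an index comparison pins it down to $P_1$ or $P_{n-1}$. This is lighter machinery and handles all $(n,q)$ uniformly, whereas your approach still owes the reader the "direct inspection of the subgroup lattice" in the small exceptional cases that you defer. Notice you already observe that $Q$ contains a Sylow $p$-subgroup --- that observation alone, fed into Tits' lemma, gives the containment in a parabolic without invoking the full maximal subgroup classification.

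The real flaw is your concluding sentence in part (ii): "the abelianization $M/[M,M]$ is cyclic of order $q-1$, hence the unique subgroup of $M$ of index $q-1$ is $[M,M]$." Having a cyclic abelianization does not imply that every subgroup of a given index contains the derived subgroup --- that implication only holds for \emph{normal} subgroups (so that the quotient is a quotient of the abelianization). For instance, $S_3$ has cyclic abelianization $C_2$ but a non-normal subgroup of index $3$ not containing $[S_3,S_3]$. You have not shown $Q\lhd M$, so this step fails as written. The paper proceeds more carefully: first it shows $U\lhd Q$ (from a Sylow argument: $O_p(M)=U$ lies in every Sylow $p$-subgroup, and $Q$ contains one), then writes $Q=U(Q\cap K)$ with $[K:Q\cap K]=q-1$, and finally argues that $[[K,K]:Q\cap[K,K]]$ divides $q-1$; since proper subgroups of $\SL_{n-1}(q)$ have index $>q-1$ (with the single exception $(n,q)=(3,9)$, where $\SL_2(9)$ has minimal index $6$, which does not divide $8$), one gets $Q\supseteq[K,K]$ and thus $Q=U[K,K]=\Stab_L(v)$. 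Your write-up skips exactly this portion of the argument and replaces it with an incorrect abelianization shortcut; you would need to supply the missing index computation in $\SL_{n-1}(q)$ (and note the $(3,9)$ exception) to make the proof go through.
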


\begin{proof}
Our proof uses the following result, which is known in the literature as {\it Borel-Tits theorem}, or {\it Tits' lemma}, cf. \cite[(1.6)]{Se}:
{\it If $G$ is a finite group of Lie type of simply connected type in characteristic $p$ and if $M$ is a maximal subgroup of $G$ containing 
a Sylow $p$-subgroup of $G$, then $M$ is a parabolic subgroup of $G$}.

\smallskip
(i) Let $M$ be a maximal subgroup of $L$ containing $P$. By the above statement, $M$ is a maximal parabolic subgroup of $L$,
that is, there is some $1 \leq i \leq n-1$ such that $M$ is the stabilizer in $L$ of some $i$-dimensional subspace of $W$. Note that
$[L:M]$ is greater than $q(q^n-1)/(q-1)$ if $2 \leq i \leq n-2$, and equal to $(q^n-1)/(q-1)$ if $i \in \{1,n-1\}$. Since $[L:P] = (q^n-1)/(q-1)$,
it follows that $i \in \{1,n-1\}$, and $P=M$.

\smallskip
(ii) Again, let $M$ be a maximal subgroup of $L$ containing $Q$. The arguments in (i) show that $M$ is either the stabilizer in $L$ of a line or the stabilizer of a hyperplane of $W$. Applying $\tau$ to both $Q$ and $M$ when $n \geq 3$ if necessary, we may assume that $M = \Stab_L(\langle v \rangle_{\F_q})$
for some $0 \neq v \in W$.

Note that $M=U \rtimes K$, where $U$ is a normal $p$-subgroup (with $p$ the prime dividing $q$) and $K \cong \GL_{n-1}(q)$, and 
$[M:Q]=q-1$. Hence $U \lhd Q$, $Q = U(Q \cap K)$, and 
\begin{equation}\label{eq:cent1}
  [K:Q \cap K] = [M:Q]=q-1. 
\end{equation}  
If $n=2$, then $|U|=q=|Q|$, whence $Q=U=\Stab_L(v)$, and so we are done in this
case. We will now assume that $n \geq 3$. Now, $\Stab_L(v)=U[K,K]$, where $[K,K] \cong \SL_{n-1}(q)$. By \eqref{eq:cent1},
$[[K,K]:Q \cap [K,K]]$ divides $[K:Q \cap K]=q-1$. On the other hand, the index of proper subgroups of $\SL_{n-1}(q)$ is 
larger than $q-1$, see e.g. \cite[Table 5.2.A]{KlL}, unless $(n,q) = (3,9)$. In the exceptional case, any subgroup of 
$[K,K] \cong \SL_2(9) \cong 2\alt_6$ of index dividing $8$ must coincide with $[K,K]$. Thus in all cases $Q \geq [K,K]$, whence
$Q = U[K,K]$ by order comparison.
\end{proof}

\begin{lem}\label{affine}
Let $r$ be a prime, $m \in \Z_{\geq 2}$, and $(m,r) \neq (2,2)$. Then the affine general linear group ${\mathrm {AGL}}_m(r)$ does not possess any element of order $r^m$.
\end{lem}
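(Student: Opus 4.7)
The plan is to write an element of $\AGL_m(r) = \F_r^m \rtimes \GL_m(r)$ as a pair $(v,g)$ with multiplication $(v_1,g_1)(v_2,g_2) = (v_1 + g_1 v_2,\, g_1 g_2)$, and show directly that under the hypothesis every element satisfies $(v,g)^{r^{m-1}} = e$, whence no element can have order $r^m$. An easy induction gives $(v,g)^k = (S_k(g) v,\, g^k)$ with $S_k(g) := 1 + g + g^2 + \cdots + g^{k-1}$. If $(v,g)$ has $r$-power order, then so does $g$; but in characteristic $r$ this forces $g$ to be unipotent, so I write $g = 1 + N$ with $N \in M_m(\F_r)$ nilpotent, necessarily of nilpotency index at most $m$.

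The key computation is the identity
$$S_{r^k}(g) = N^{r^k - 1} \quad \text{in } M_m(\F_r),$$
which follows from the ``freshman's dream'' $(1+N)^{r^k} = 1 + N^{r^k}$ together with the divisibility $r \mid \binom{r^k}{j}$ for $1 \leq j \leq r^k - 1$: expanding $\sum_{j=0}^{r^k-1}(1+N)^j$ and applying the hockey-stick identity $\sum_{j=0}^{r^k-1}\binom{j}{i} = \binom{r^k}{i+1}$ makes this immediate. Consequently
$$(v,g)^{r^{m-1}} \;=\; \bigl(N^{r^{m-1}-1} v,\; 1 + N^{r^{m-1}}\bigr).$$

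Since $N^m = 0$, both components vanish as soon as $r^{m-1} - 1 \geq m$, i.e., $r^{m-1} > m$. A one-line check shows this strict inequality holds precisely when $m \geq 2$ and $(m,r) \neq (2,2)$: for $m = 2$ it reads $r > 2$, while for $m \geq 3$ one has $r^{m-1} \geq 2^{m-1} > m$ by an easy induction on $m$. This forces $(v,g)^{r^{m-1}}$ to equal the identity $(0,1)$, so the order of $(v,g)$ divides $r^{m-1}$ and hence is strictly less than $r^m$.

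The only thing requiring any care is the bookkeeping in this final inequality, which isolates exactly the excluded pair $(m,r) = (2,2)$: there $r^{m-1} = m$ and the argument breaks down, consistently with the fact that $\AGL_2(2) \cong \sym_4$ genuinely contains $4$-cycles of order $r^m = 4$, so the hypothesis $(m,r) \neq (2,2)$ is sharp.
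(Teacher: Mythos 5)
Your proof is correct, but it takes a genuinely different route from the paper. The paper embeds $\AGL_m(r)$ into $\SL_{m+1}(r)$ (more precisely, its $r$-elements land in $\SL_{m+1}(r)$ via the stabilizer-of-a-vector embedding into $\GL_{m+1}(r)$) and then reads off from the Jordan canonical form that a unipotent element of $\GL_{m+1}(r)$ has order at most $r^{\lceil (m+1)/r\rceil}$, which is $\leq r^{m-1}$ under the hypotheses. You instead work inside the semidirect product $\F_r^m\rtimes\GL_m(r)$ and compute $(v,g)^k=(S_k(g)v,g^k)$, then use the identity $S_{r^k}(1+N)=N^{r^k-1}$ (via the hockey-stick sum and $r\mid\binom{r^k}{j}$) to conclude $(v,g)^{r^{m-1}}=e$ once $r^{m-1}>m$. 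Your argument is more elementary and self-contained in that it avoids invoking the Jordan-form order bound, at the cost of being slightly longer; conversely, the paper's bound $r^{\lceil (m+1)/r\rceil}$ is sharper than your $r^{m-1}$ for large $m$ and $r$, though both suffice here. One small stylistic point: your opening sentence claims \emph{every} element satisfies $(v,g)^{r^{m-1}}=e$, which is false as stated (the group does not have $r$-power exponent); as your own proof makes clear, this is asserted only for elements $(v,g)$ of $r$-power order, which is all that is needed. The sharpness remark via $\AGL_2(2)\cong\sym_4$ is a nice sanity check.
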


\begin{proof}
We can embed ${\mathrm {AGL}}_m(r)$ in $\SL_{m+1}(r)$ (as the stabilizer of a nonzero vector in $\F_r^{m+1}$). Writing any $r$-element
$x \in \SL_{m+1}(r)$ in its Jordan canonical form, we see that $|x|$ is at most $r^a$, where $a = \lceil (m+1)/r \rceil$. As $m \geq 2$ and 
$(m,r) \neq (2,2)$, we have $a \leq m-1$, and the statement follows. 
\end{proof}

We will also need the following classification of certain doubly transitive permutation groups:

\begin{thm}\label{2-trans}
Let $q$ be a prime power, $n \in \Z_{\geq 3}$, $N_0:=(q^n-1)/(q-1)$, and let $\Psi:\sym_{N_0} \to \GL(V_0)$ denote the representation of 
$\sym_{N_0}$ on the deleted permutation module $V_0 = \C^{N_0-1}$. Suppose that $G \leq \sym_{N_0}$ is such that 
$\Psi|_G$ is irreducible and contains an element with simple spectrum. Then one of the following statements holds.
\begin{enumerate}[\rm(i)]
\item $\alt_{N_0} \lhd G \leq \sym_{N_0}$, with $G$ acting naturally on $N_0$ points.
\item $\PSL_m(s) \cong S:=\soc(G) \lhd G \leq \Aut(S)$, for some prime power $s$ and $m \geq 2$, and 
with $S$ acting on $(s^m-1)/(s-1)=(q^n-1)/(q-1)$ lines or hyperplanes of $\F_s^m$. 
\item $(q^n-1)/(q-1)=r^m$ for some prime $r$ and $m \geq 1$, and 
$C_r^m \cong \soc(G) \lhd G \leq \mathrm{AGL}_m(r)$, with $\AGL_m(r)$ acting via affine transformations of $\F_r^m$.
\end{enumerate}
\end{thm}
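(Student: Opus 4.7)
The plan is to rephrase the two hypotheses in permutation-group language and then invoke the CFSG-classification of finite $2$-transitive permutation groups. The irreducibility of $\Psi|_G$ on the $(N_0-1)$-dimensional deleted permutation module is equivalent to $\langle\pi_G,\pi_G\rangle=2$, where $\pi_G$ is the permutation character of $G$ on the $N_0$ points; this is the classical criterion for $G$ to act $2$-transitively on these points. Next, the existence of an element $g\in G$ whose image under $\Psi$ has simple spectrum is a stringent constraint on the cycle structure of $g$ as a permutation. If $g$ has $k$ cycles of lengths $c_1,\dots,c_k$ with $\sum_r c_r=N_0$, then the eigenvalue $1$ has multiplicity $k$ on $\C^{N_0}$, hence $k-1$ on $V_0$, forcing $k\le 2$; and for each $d>1$ a primitive $d$-th root of unity has multiplicity $\#\{r:d\mid c_r\}$ on $V_0$, so the $c_i$ must be pairwise coprime. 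Hence either $g$ is an $N_0$-cycle of order $N_0$, or $g$ has type $(c,N_0-c)$ with $\gcd(c,N_0-c)=1$ and order $c(N_0-c)\ge N_0-1$; in every case $G$ contains an element of order at least $N_0-1$.

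By the CFSG-classification of $2$-transitive groups, the socle $S$ of $G$ is either an elementary abelian $r$-group $C_r^m$ (the affine case, giving conclusion (iii) and forcing $N_0=r^m$), or a nonabelian simple group on the familiar short list. The only nonabelian possibilities leading to (i) or (ii) are $S=\alt_{N_0}$ or $S=\PSL_m(s)$ acting on the $(s^m-1)/(s-1)$ points or hyperplanes of $\P^{m-1}(\F_s)$ (the two actions are swapped by the transpose-inverse outer automorphism when $m\ge 3$, and coincide when $m=2$). The remaining nonabelian possibilities are the Lie-type entries $\PSU_3(s)$, $\mathrm{Sz}(s)$, $\Ree(s)$, $\Sp_{2d}(2)$, together with finitely many sporadic entries: $\alt_7$ on $15$, $\PSL_2(11)$ on $11$, $M_{11}$ on $11$ or $12$, $M_{12}$ on $12$, $M_{22}$ on $22$, $M_{23}$ on $23$, $M_{24}$ on $24$, $\mathrm{HS}$ on $176$, and $\mathrm{Co}_3$ on $276$.

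What remains is to eliminate these exceptional candidates using the bound $|g|\ge N_0-1$. For each Lie-type family, the maximal element orders, read off from the maximal tori, grow much more slowly than $N_0-1$: in $\PSU_3(s)$ the exponent divides $s^2-1$ or $s^2+s+1$ while $N_0-1=s^3$; $\mathrm{Sz}(s)$ has element orders at most $s+\sqrt{2s}+1$ while $N_0-1=s^2$; $\Ree(s)$ has element orders at most $s+\sqrt{3s}+1$ while $N_0-1=s^3$; and in $\Sp_{2d}(2)$ the maximal torus order is $O(2^d)$ while $N_0-1$ is of order $2^{2d-1}$. Each contradicts the order bound. For the sporadic candidates, the diophantine equation $(q^n-1)/(q-1)=N_0$ with $n\ge 3$ and $q$ a prime power has no solution when $N_0\in\{11,12,22,23,24,176,276\}$, while the one remaining value $N_0=15$ (realized by $(q,n)=(2,4)$) admits the alternative socle $\alt_7$, which is excluded because $\exp(\sym_7)=12<14=N_0-1$. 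I expect the element-order bookkeeping for the four Lie-type exceptional families to be the main obstacle; the rest of the argument is a routine CFSG appeal combined with elementary diophantine checks.
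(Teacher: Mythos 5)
Your high-level plan matches the paper's: translate irreducibility of $\Psi|_G$ into double transitivity, invoke the CFSG classification of $2$-transitive groups (Cameron), and eliminate the exceptional entries. What differs is the tool used to kill the non-affine, non-$\PSL$ cases. From the simple-spectrum hypothesis you extract a uniform lemma --- $g$ has at most two cycles, of pairwise coprime lengths, hence $|g|\geq N_0-1$ --- and then compare against maximal element orders in $\PSU_3(s)$, $\mathrm{Sz}(s)$, $\Ree(s)$, $\Sp_{2d}(2)$ and the sporadics. The paper instead observes that $n\geq 3$ forces $N_0-1=q\cdot (q^{n-1}-1)/(q-1)$ to be a product of two coprime integers both $>1$, hence never a prime power; this single observation instantly discards $\PSU_3(s)$ (degree $s^3+1$), $\mathrm{Sz}(s)$ (degree $s^2+1$), $\Ree(s)$ (degree $s^3+1$), and the Mathieu degrees $12$ and $24$, leaving only $\Sp_{2m}(2)$ --- handled, as you do, via the element-order bound of \cite[Theorem 2.16]{GMPS} --- and a short list of remaining sporadic degrees checked directly. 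Your order lemma is more uniform but, as you acknowledge, makes the $\PSU_3/\mathrm{Sz}/\Ree$ bookkeeping over the full almost-simple group (including diagonal and field automorphisms) the main labor; the paper's prime-power observation sidesteps this at the cost of being ad hoc. Two small points: $\PSL_2(8)$ on $28$ points belongs on your sporadic list (it too falls to the paper's observation, since $27=3^3$); and for $\alt_7$ on $15$ points you mean the maximum element order of $\sym_7$, which is $12$, not its exponent ($420$). In fact your explicit treatment of $\alt_7$ on $15$ is a genuine improvement, since $N_0-1=14$ is \emph{not} a prime power and $15$ is not among the degrees the paper lists as directly checked.
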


\begin{proof}
The irreducibility of $\Psi|_G$ is equivalent to that $G$ be a doubly transitive permutation subgroup of $\sym_{N_0}$.
We will apply the classification of finite doubly transitive permutation groups \cite[Theorem 5.3]{Cam}, which is a consequence of 
the classification of finite simple groups. Let $S$ denote the socle $\soc(G)$ of $G$. If $S$ is abelian, then (iii) holds. So we will assume
that $S$ is non-abelian, whence it is a simple group. As (i) holds if $S \cong \alt_{N_0}$, we will also assume that $S \not\cong \alt_{N_0}$.
Furthermore, as in the proof of Proposition \ref{weil-sl-value}, the assumption $n \geq 3$ implies that $N_0-1$ cannot be a prime power.
Direct computation shows that $N_0 \neq 11$, $22$, $23$, $36$, $176$, $276$. Finally, assume we are in
the case where $S \cong \Sp_{2m}(2)$ and $N_0 = 2^{m-1}(2^m \pm 1)$ for some $m \geq 4$. In this case, if $g \in G$ has simple
spectrum, then the order of $g\ZB(G)$, as an element of $G/\ZB(G) \leq \Aut(S)$, is at most $2^{m+1}$ by 
\cite[Theorem 2.16]{GMPS}, which is smaller than $\deg(\Psi) = N_0-1$, a contradiction. 
It now follows from \cite[Theorem 5.3]{Cam} that (ii) holds.
\end{proof}

\begin{rmk}
One may wonder if case (ii) of Theorem \ref{2-trans} may occur for some $s$ coprime to $q$. One such occurrence is 
$2^5-1 = (5^3-1)/4$, and a computation on {\sf {Mathematica}} reveals that this is the only occurence when
$(p^a-1)/(p-1) = (r^b-1)/(r-1)$ for some distinct primes $p, r \leq 1223$ and $2 \leq a,b \leq 200$. If one relaxes the primeness
condition, then this equation is known in literature as the {\it Goormaghtigh equation}, and the only other known solution is
$2^{13}-1 = (90^3-1)/89$.

On the other hand, for any given 
$n \geq 2$, $(q^n-1)/(q-1)$ can be a prime power, or even a prime; in fact, the {\it Bateman--Horn conjecture} 
\cite{BH} implies in particular that this can happen infinitely often. 
\end{rmk}

\smallskip
The main result of this section is the following theorem:

\begin{thm}\label{main-sl1}
Let $q=p^f$ be a power of a prime $p$, $n \geq 3$, and $N:=q^n-1$. Let $G \leq \sym_N$ be a subgroup with the following properties:
\begin{enumerate}[\rm(a)]
\item If $\Phi$ denotes the representation of $\sym_N$ on its natural permutation module $\C^N$, then 
$$\Phi|_G = \oplus^{q-2}_{i=0}\Phi_i \oplus 1_G,$$ 
where $\Phi_i \in \Irr(G)$ has degree $(q^n-1)/(q-1)-\delta_{i,0}$;
\item $G_0:= \Phi_0(G)$ embeds in $\sym_{N_0}$, where $N_0:=(q^n-1)/(q-1)$, in such a way that $\Phi_0$ is the restriction to
$G_0$ of the representation of $\sym_{N_0}$ on its deleted permutation module $\C^{N_0-1}$. Furthermore, {$\Phi_0(G)$ contains 
an element of order $N_0$ and a $p$-subgroup of order $q^{n-1}$}.
\item For every $g \in G$, $\Tr(\Phi(g))+1$ is a $q$-power.
\end{enumerate}
Then $\SL(W) \cong L:=G^{(\infty)} \lhd G \leq \GL(W)$ for $W=\F_q^n$. Moreover, $\Phi|_G$ is equivalent to the permutation action of 
$G$ on the set $\Omega$ of nonzero vectors of $W$.
\end{thm}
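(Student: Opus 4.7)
The plan is to identify the structure of $G_0 := \Phi_0(G) \leq \sym_{N_0}$ via Theorem~\ref{2-trans}, then to identify $L := G^{(\infty)}$ as $\SL_n(q)$ via Proposition~\ref{weil-sl-value} and Theorem~\ref{main-weil-sl}, and finally to pin $\Phi|_G$ down as the natural permutation action of $G \leq \GL_n(q)$ on the $N = q^n - 1$ nonzero vectors of $W := \F_q^n$.

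First I would verify the hypotheses of Theorem~\ref{2-trans} for $G_0$. Irreducibility of $\Phi_0|_{G_0}$ from (a) gives $2$-transitivity of $G_0$ on $N_0$ points. The element of order $N_0$ in $G_0$ given by (b) must act as a single $N_0$-cycle: any order-$N_0$ element of $\sym_{N_0}$ whose image on the deleted permutation module has simple spectrum is forced to be a single cycle, since two or more cycles create multiplicity $\geq 2$ at the eigenvalue $1$ on the deleted module (the exceptional case of two cycles of coprime lengths with sum and product both equal to $N_0$ has no integer solutions for $N_0 \geq 5$). Hence Theorem~\ref{2-trans} applies, yielding three possibilities for $\soc(G_0)$: affine, alternating/symmetric, or $\PSL_m(s)$-type.

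I would then eliminate the first two. The affine case (iii) is immediate: $n \geq 3$ forces $N_0 \geq 7 > 4$, so $(m,r) \neq (2,2)$ and Lemma~\ref{affine} forbids $\AGL_m(r)$ from containing an element of order $N_0 = r^m$, contradicting (b). The alternating/symmetric case (i) is the main obstacle: I would combine the trace condition (c) with the block decomposition of the $N = N_0(q-1)$ points induced by $\ker(\Phi_0) \lhd G$, which produces $N_0$ blocks of size $q-1$. For any lift $g \in G$ of a $3$-cycle $(abc) \in \alt_{N_0}$, the $N_0 - 3$ fixed blocks each contribute some number of fixed points, and $\#\mathrm{fix}(g) \in \{q^i - 1 : 0 \leq i \leq n\}$. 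Varying both the $3$-cycle within $\alt_{N_0}$ (which is $3$-transitive on the block set) and the lift within its coset of $\ker(\Phi_0)$, these discrete constraints become incompatible with the rich freedom of $\alt_{N_0}$ for $N_0 \geq 7$, producing a contradiction.

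In the remaining case (ii), $\PSL_m(s) = \soc(G_0) \lhd G_0 \leq \Aut(\PSL_m(s))$ with $(s^m-1)/(s-1) = N_0$. Since $\Out(\PSL_m(s))$ is solvable, $L$ satisfies $\Phi_0(L) = \PSL_m(s)$; setting $Z := L \cap \ker(\Phi_0)$ gives $L/Z \cong \PSL_m(s)$. The $Z$-action within blocks, being diagonally embedded in $\sym_{q-1}^{N_0}$ (forced by the transitivity of $L$ on blocks and the normality of $Z$), is centralized by $L$, so $Z \leq \ZB(L)$. Hence $L$ is a perfect central extension of $\PSL_m(s)$, a quotient of its Schur cover $\SL_m(s)$. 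The character $\psi := \Phi|_L + 1_L$ then decomposes as $2 \cdot 1_L + \sum_u a_u \tau^u_m$ with $a_0 = 1$, total degree $q^n$, and $q$-power values on transvection-lifts (from (c)). Proposition~\ref{weil-sl-value} forces $(m,s) = (n,q)$, so $L = \SL_n(q) = \SL(W)$, and Theorem~\ref{main-weil-sl} identifies $\psi$ with the total Weil character $\tau_n$. Therefore $\Phi|_L$ coincides with the natural permutation character of $L$ on $\Omega$. Finally, Lemma~\ref{cent-sl} gives $\CB_{\sym_N}(L) = \ZB(\GL(W))$, and Lemma~\ref{trans-sl} combined with normalizer analysis pins $G$ inside $\GL(W)$, completing the identification of $\Phi|_G$ with the natural action of $G \leq \GL(W)$ on $\Omega$.
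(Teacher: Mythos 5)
Your high-level roadmap — apply Theorem \ref{2-trans} to $G_0$, eliminate the affine and alternating possibilities, reduce to $\PSL_m(s)$, identify $L := G^{(\infty)}$ as $\SL_n(q)$ via Proposition \ref{weil-sl-value} and Theorem \ref{main-weil-sl}, then pin down $G$ inside $\GL(W)$ using Lemmas \ref{cent-sl} and \ref{trans-sl} — matches the paper's plan. But several of your intermediate steps are either asserted without proof or genuinely incorrect as stated.

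\textbf{The affine case with $m=1$ is not handled.} You rule out the affine alternative by citing Lemma \ref{affine}, which only applies for $m \geq 2$ and $(m,r) \neq (2,2)$. But $N_0 = (q^n-1)/(q-1)$ can perfectly well be prime (e.g.\ $(2^5-1)/1 = 31$, or $(3^3-1)/2 = 13$), in which case $r = N_0$, $m=1$, and $\AGL_1(r)$ certainly contains elements of order $r = N_0$; there is no contradiction with hypothesis (b). This is precisely the hardest subcase, and the paper devotes an entire paragraph of part (ii) to it: the argument goes through the structure $G_0 = \AGL_1(r)$, shows $g$ centralizes $K := \Ker(\Phi_0)$ via the transitivity of $\NB_G(\langle g\rangle)$ on $\langle g\rangle \smallsetminus \{1\}$, shows $K$ is abelian using $r$-defect-zero characters, and finally derives a contradiction from Ito's theorem on degrees. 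None of this is in your proposal.

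\textbf{The $\alt_{N_0}$ case is dispatched by a hand-wave.} Your paragraph about $3$-cycles and block configurations concludes with "these discrete constraints become incompatible with the rich freedom of $\alt_{N_0}$", which is not an argument. The paper's elimination of $\alt_{N_0}$ is quite different and genuinely rigorous: for $q=2$ one evaluates $\Phi$ at an $(N_0-2)$-cycle (giving $\Tr+1 = 3$, never a $2$-power); for $q\geq 3$ the case is absorbed into the Clifford-theoretic dichotomy of part (v), where the constituent degree $c \in \{1, N_0\}$ over $K$ is analyzed and both branches are closed off by explicit inequalities.

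\textbf{The step $Z \leq \ZB(L)$ is asserted, not proved.} You claim $Z := L \cap \Ker(\Phi_0)$ is "diagonally embedded in $\sym_{q-1}^{N_0}$" and "centralized by $L$", citing transitivity and normality. Neither of those facts delivers that conclusion on its own. The paper instead first bounds $|K| < (q-1)^2$ (part (i)), then shows via $P(S) = N_0 > |K|$ that $M$ acts trivially on $\Irr(K)$, then uses Clifford theory plus the quantitative statement \eqref{eq:sl16} about projective irreducible degrees to force every irreducible constituent of $\Phi|_K$ to have degree $1$, whence $K$ is abelian and $K \leq \ZB(M)$. This is a substantial chain of reasoning, not a one-liner.

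\textbf{You assume the decomposition into Weil characters instead of proving it.} You write that $\psi := \varphi|_L + 1_L$ "then decomposes as $2\cdot 1_L + \sum_u a_u\tau^u_m$" and immediately invoke Proposition \ref{weil-sl-value}. But that proposition takes the Weil-character decomposition as \emph{hypothesis} (d); it does not prove it. The paper earns this decomposition by establishing (via \eqref{eq:sl16}, which rests on \cite[Thm.~3.1]{TZ1} and \cite[Lem.~6.1]{GT2}) that every projective irreducible of $S$ of degree dividing $N_0$ lifts to a degree-$N_0$ linear representation of $\SL_m(s)$, and then applying \cite[Thm.~3.1]{TZ1} once more to identify these as Weil representations. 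This step cannot be skipped: without it one cannot even apply Proposition \ref{weil-sl-value}.

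Finally, a small point: your claim that the element of order $N_0$ from (b) must be an $N_0$-cycle "since simple spectrum forces a single cycle" is circular, as you are trying to \emph{establish} that it has simple spectrum in order to apply Theorem \ref{2-trans}. (This is a point where the statement of the paper's theorem is itself slightly terse; in the application the element in question is the image of a generator of $I(0)$ and does have simple spectrum, but that needs to be imported as part of hypothesis (b).)
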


\begin{proof}
(i) Let $\varphi$, respectively $\varphi_i$, denote the character of $\Phi$, respectively $\Phi_i$. 
Also, let $K$ denote the kernel of $\Phi_0$, so that $G/K \cong G_0$. Note by (a) and faithfulness of $\Phi$ that $K=1$ if $q=2$.
Condition (a) implies that $G$ is a transitive subgroup of 
$\sym_N$, hence $K$ acts semi-transitively on $\Omega$, that is, all $K$-orbits have common length say $k$. 
 As $K = \Ker(\Phi_0)$, we have
$$k=[\varphi|_K,1_K]_K \geq 1+\dim(\Phi_0) = N_0.$$  
On the other hand, $\varphi(x) \leq q^{n-1}-1$ by (c). It follows that 
$$\frac{q^n-1}{q-1} \leq [\varphi|_K,1_K]_K \leq \frac{(q^n-1) + (q^{n-1}-1)(|K|-1)}{|K|},$$
and so
\begin{equation}\label{eq:sl11}
  |K| \leq \frac{q^{n-1}(q-1)^2}{(q^n-1)-(q^{n-1}-1)(q-1)} < (q-1)^2.
\end{equation}

(ii) Now we use (b) and Theorem \ref{2-trans} (applied to $\Psi=\Phi_0$) to determine $G_0$. 
Suppose we are in the affine case, that is $N_0 = r^m$ for some prime $r$ and $C_r^m \cong \soc(G_0) \lhd G_0 \leq \AGL_m(r)$. 
If $m \geq 2$ and $(m,r) \neq (2,2)$, then $G_0$ does not contain any element of order $r^m$ by Lemma \ref{affine}, and this 
contradicts (b). The case
$(q^n-1)/(q-1) = 2^2$ is ruled out since $n \geq 3$. 

Finally, assume that $m=1$, so that $r=(q^n-1)/(q-1)$. Since $G_0 \leq \AGL_1(r)$ is a doubly transitive subgroup of $\sym_r$, 
we actually have 
\begin{equation}\label{eq:sl12}
  G_0 = \AGL_1(r) \cong C_r \rtimes C_{r-1}.
\end{equation}  
As $n \geq 3$, we have by \eqref{eq:sl11} that $|K| < r$, whence 
\begin{equation}\label{eq:sl13}
  |G|_r = |G_0|_r = r. 
\end{equation}  

If $q=2$, then we have $r=2^n-1 \geq 7$ and $r-1=2^n-2 \equiv 2 (\mod 4)$. As $K=1$ in this case, we have that $|G|_2 = |G_0|_2 = 2$,
and so the Sylow $2$-subgroups of $G$ have order $2$. On the other hand, $G$ contains a subgroup $P$ of order $2^{n-1} \geq 4$ 
by (b), a contradiction. Hence we may assume $q \geq 3$ in the affine case.  

Let $\bar{g}$ be a generator of 
$\OB_r(G_0) \cong C_r$ and let $g \in G$ be an inverse image of order $r$ of $\bar{g}$. Recall by \eqref{eq:sl12} that 
any power $g^i \neq g$ is conjugate to $g$ in $G/K =G_0$. But $K$ is a normal $r'$-subgroup of $G$, hence any such 
$g^i \neq g$ is also conjugate to $g$ in $G$ by \cite[Lemma 4.11]{TZ3}. Thus $\NB_G(\langle g \rangle)$ acts transitively
on $\langle g \rangle \smallsetminus \{1\}$ and so also on the $r-1$ nontrivial irreducible characters of $\langle g \rangle$. 
It follows that any transitive permutation representation of $\NB_G(\langle g \rangle)$ that is nontrivial on $g$ has degree 
at least $r > |K|$. Applying this remark to the conjugation action of $\NB_G(\langle g \rangle)$ on $K$, we conclude that
$g$ centralizes $K$. 

Recall also that $\varphi_j(1)=N_0=r$ for any $j>0$, hence $\varphi_j$ has $r$-defect $0$ by \eqref{eq:sl13}. It follows that 
$\varphi_j(g)=0$ and $\Phi_j(g)$ is conjugate to $\diag(1,\epsilon,\epsilon^2, \ldots,\epsilon^{r-1})$ (over $\C$) for a primitive
$r^{\mathrm {th}}$ root of unity $\epsilon \in \C$. As $K$ centralizes $g$, $K$ fixes each of $r$ one-dimensional eigenspaces of
$g$ in $\Phi_j$, and so $(\Phi_j)|_K$ is a sum of one-dimensional representations. This holds for every $j>0$, and also for 
$j=0$ as $K=\Ker(\Phi_0$. It follows by faithfulness of $\Phi$ that $K$ is abelian. Now 
$$K\langle g \rangle/K = \langle \bar{g} \rangle = \OB_r(G_0) \lhd G_0,$$
whence $K \langle g \rangle$ is a normal subgroup of index $r-1$ of $G$ by \eqref{eq:sl12}. Also, $K \langle g \rangle$ is abelian,
as $K$ is abelian and $[g,K]=1$. It follows by Ito's theorem \cite[(6.15)]{Is} that the degree of any irreducible character of $G$ divides
$r-1$, and this contradicts the equality $\varphi_1(1)=r$. 

\smallskip
(iii) We have ruled out the affine case, and hence have that $S:=\soc(G_0)$ is a non-abelian simple group. By Theorem \ref{2-trans}, we 
have that $S = \alt_{N_0}$ acting on $N_0$ points, or (up to an automorphism) $S= \PSL_m(s)$ acting on
\begin{equation}\label{eq:sl14}
  N_0=\frac{s^m-1}{s-1}=\frac{q^n-1}{q-1}
\end{equation}  
lines of $\F_s^m$. As in the proof of Proposition \ref{weil-sl-value}, \eqref{eq:sl14} and $n \geq 3$ imply that $m \geq 3$.

Here we handle the case $q=2$; in particular, $K=1$ and $G = G_0$. If $S = \alt_{N_0}$, then we can take $x \in S$ to be a
a $(N_0-2)$-cycles, for which we have $\varphi(x)+1=3$, contradicting (c). Hence $S = \PSL_m(s)$ and \eqref{eq:sl14} holds.
Applying Proposition \ref{weil-sl-value} to $\psi:=\varphi|_S+1_S$,  we see that $(m,s) = (n,q)$, that is, 
$\soc(G)=\SL_n(2)$, and the statement follows.

\smallskip
(iv) From now on we may assume $q \geq 3$; in particular, $N_0 \geq 13$ and $N_0 \neq 15$, whence $m \geq 3$ and 
$(m,s) \neq (3,2)$, $(4,2)$ in \eqref{eq:sl14}. Let $P(S)$ denote the smallest index of proper subgroups of $S$. 
By \cite[Table 5.2.A]{KlL},
\begin{equation}\label{eq:sl15}
  P(S) = N_0 > (q-1)^2 > |K|.
\end{equation}
Furthermore, using \cite[Lemma 6.1]{GT2}, \cite[Theorem 3.1]{TZ1}, and also \cite{ATLAS} when
$S = \PSL_3(4)$ and $\PSL_4(3)$, we see that
\begin{equation}\label{eq:sl16}
  \begin{array}{c}\mbox{Any nontrivial projective irreducible complex representation of }S\\
  \mbox{ of degree dividing }N_0
  \mbox{ is a linear representation of }\hat{S}\mbox{ of degree }N_0, \end{array} 
\end{equation}
moreover, such a representation exists only when $S = \PSL_m(s)$, in which case
$\hat{S} = \SL_m(s)$.

\smallskip
(v) Recalling $S = \soc(G_0)$, we let $M >K$ be the normal subgroup of $G$ such that $M/K = S$. Certainly, $K$ fixes every 
irreducible character of $K$. Now, \eqref{eq:sl15} implies that the permutation action of $M$ on $\Irr(K)$ is trivial, that is, every
$\alpha \in \Irr(K)$ is $M$-invariant.

Let $i > 0$ and let $\alpha$ be any irreducible constituent of $(\varphi_i)|_M$. By the previous result and by Clifford's theorem, 
$\alpha|_K = c\gamma$ for some $c \in \Z_{\geq 1}$ and $\gamma \in \Irr(K)$. Again by Clifford's theorem, there is some 
projective irreducible complex representation $\Theta$ of $S$ of degree $c$, and note that $c$ divides $\varphi_i(1)=N_0$. Hence,
by \eqref{eq:sl16}, $c=1$ or $c=N_0$.

Suppose we are in the former case: $c=1$. Then $\alpha(1)=\gamma(1) \leq \sqrt{|K|}$, whence $\alpha(1) < q-1$ by 
\eqref{eq:sl11}. On the other hand, 
$$\varphi_1(1)/\alpha(1) \leq [G:M]=|G_0/S|.$$
If $S=\alt_{N_0}$, then we get $(q^n-1)/(q-1) = \varphi_i(1) < 2(q-1)$, a contradiction since $n \geq 3$. Thus $S=\PSL_m(s)$ with
$m \geq 3$. Since $G_0$ is acting doubly transitively on $N_0$ lines and has socle $S$, we have that 
$|G_0/S| \leq \gcd(m,s-1)e$, if $s=r^e$.  It follows that
\begin{equation}\label{eq:sl17}
  N_0=\frac{s^m-1}{s-1} < (q-1) \cdot \gcd(m,s-1)e.
\end{equation}  
The assumption $n \geq 3$ implies that $q-1 < \sqrt{N_0}$. If $m \geq 5$, then 
$$\gcd(m,s-1)e \leq s(s-1)/2 < \sqrt{(s^m-1)/(s-1)} = \sqrt{N_0},$$
contradicting \eqref{eq:sl17}. If $m=4$, then as $s \geq 3$ we have 
$$\gcd(m,s-1)e \leq 4(s/2) = 2s < \sqrt{(s^4-1)/(s-1)} = \sqrt{N_0},$$
again a contradiction. Suppose $m=3$. If $s \neq 4,8$, then $s=r^e \geq 3e$ and so
$$\gcd(m,s-1)e \leq 3(s/3) = s < \sqrt{(s^3-1)/(s-1)} = \sqrt{N_0},$$
again a contradiction. We also reach a contradiction with \eqref{eq:sl17} when $s=2,8$.  In the remaining case
$(m,s) = (3,4)$, whence $N_0=21$, implying that $(n,q) = (3,4)$ and \eqref{eq:sl17} is violated again.

We have shown that $c=N_0$, and so $\gamma(1)=\alpha(1)/c \leq 1$. Thus every irreducible constituent of each 
$(\varphi_i)|_K$ has degree $1$ when $i > 0$. The same holds for $i=0$ as $K = \Ker(\Phi_0)$. Thus every irreducible 
constituent of $\Phi|_K$ has degree $1$, whence $K$ is abelian since $\Phi$ is faithful. Now $M$ acts on $K$ via conjugation,
with $K$ acting trivially. Using \eqref{eq:sl15}, we conclude that $M$ acts trivially on $K$, i.e. $K \leq \ZB(M)$.
Also, we have shown that $\alpha(1) = N_0$, i.e. $(\Phi_i)|_M$ is irreducible.

\smallskip
(vi) Now we consider $L:=G^{(\infty)}$. Certainly, $L \lhd M$ (as $S \lhd G_0 \leq \Aut(S)$), and so $K \cap L \leq \ZB(L)$.
Also, since $S$ is simple, we must have that $S$ is a composition factor of $L$, whence $KL=M$ and 
$L/(K \cap L) = KL/K = M/K \cong S$. Thus $L$ is a cover of $S$.

As  $KL=M$ and $K = \Ker(\Phi_0)$, $(\Phi_0)|_L$ is irreducible of degree $N_0-1$. Next, recall that for $i > 0$, 
$(\Phi_i)|_M$ is irreducible of degree $N_0$, and $K \leq \ZB(M)$ acts via scalars in $\Phi_i$. 
Let $d_i$ denote the common degree of irreducible constituents $\Phi_{ij}$ of 
$(\Phi_i)|_L$. If $d_i=1$, then $\Phi_{ij} =1_L$ as $L$ is perfect. Thus $(\Phi_i)|_L$ is trivial, and so $\Phi_i$ cannot be
irreducible over $M=KL$. So $d_i > 1$ and $\Phi_{ij}$ is a nontrivial 
irreducible projective representation of $S$. By \eqref{eq:sl16}, $d_i=N_0$, $S = \PSL_m(s)$, and $(\Phi_i)|_L$ comes from 
a linear irreducible representation of $\hat{S}=\SL_m(s)$. The same is true for $i=0$. Ignoring the faithfulness of $\Phi$ (only in this 
paragraph of the proof), we may therefore replace $L$ by $\hat{S} = \SL_m(s)$. Applying \cite[Theorem 3.1]{TZ1}, we see that 
each $(\Phi_i)|_L$ is a Weil representation. Now we can apply Proposition \ref{weil-sl-value} to $\psi = \varphi|_L+1_L$
to obtain that $(m,s) = (n,q)$, that is $L = \SL_n(q)$. We then apply Theorem \ref{main-weil-sl} to the same $\psi$ to
conclude that $\psi=\tau_n$, the total Weil character of $L$. As $\tau_n$ is faithful, we also see that $G^{(\infty)} = \SL_n(q)$.

Recall that $K = \Ker(\Phi_0)$ centralizes $L$ and that $\Phi_0$ embeds $G/K$ in $\sym_{N_0}$. As $n \geq 3$, we see that 
no element of $G$ can induce a graph automorphism of $L$ (modulo the inner, diagonal, and field automorphisms). Next,
the diagonal automorphisms of $L$ fix each of $\tau^i_n$, but no nontrivial field automorphism can fix $\tau^1_n$ (the one 
corresponding to a faithful character of $\ZB(\GL_n(q))$ when we extend $\tau_n$ to $\GL_n(q)$). Thus $G$ can induce only 
inner and diagonal automorphisms of $L$, that is, 
\begin{equation}\label{eq:sl18}
  G/\CB_G(L) \leq \PGL_n(q).
\end{equation}  
 
We now return to the assumption that $G \leq \sym_N$ with $N = q^n-1$. Since $\varphi|_L = \tau_n-1_L$, we see that $L=\SL_n(q)$
acts transitively in the natural permutation action of $\sym_N$. Applying Lemma \ref{trans-sl}(ii), we see that (after twisting with
the inverse-transpose automorphism, equivalently, replacing $W$ by the dual module, if necessary), this is the permutation action
of $L$ on the set $\Omega$ of nonzero vectors of $W=\F_q^n$. 
Consider any $h \in G$. By \eqref{eq:sl18}, the conjugation by $h$ induces an inner-diagonal automorphism of $L = \SL(W)$. 
On the other hand, the action of $L$ on $\Omega$ extends to the natural action of $\GL(W)$ on $\Omega$. Hence we can find 
$h' \in \GL(W) < \sym_N$ such that $h$ and $h'$ induce the same automorphism of $L$. Thus $(h')^{-1}h \in \sym_N$ centralizes 
$L$, whence it belongs to $\GL(W)$ by Lemma \ref{cent-sl}. We conclude that $h \in \GL(W)$, i.e. $G \leq \GL(W)$.  
\end{proof}

\section{Weil representations of $\SL_2(q)$}
As before, let $q=p^f$ be a power of a prime $p$, and let $L:=\SL(W) \cong \SL_2(q)$ for $W = \F_q^2$.
To deal with the case $n=2$, we will need some more technical results, which are also interesting in their own right.

\begin{lem}\label{order}
Let $V = \C^{q^2}$ and let $\Phi:G \to \GL(V)$ be a faithful representation such that 
\begin{enumerate}[\rm(a)]
\item $\Tr(\Phi(g)) \in \{1,q,q^2\}$ for all $g \in G$.
\item $\Phi \cong \oplus^{q-2}_{i=0}\Phi_0 \oplus 2 \cdot 1_G$, where the $\Phi_i \in \Irr(G)$ are pairwise inequivalent.
\end{enumerate}
Then $|G| = |\GL_2(q)|$.
\end{lem}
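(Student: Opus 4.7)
The plan is to exploit the fact that the character $\varphi$ of $\Phi$ takes only the three values $\{1,q,q^2\}$, combined with the structural data in (b), to set up a tight linear system that pins down $|G|$ exactly. From (b)---the $\Phi_i$ are $q-1$ pairwise inequivalent irreducibles distinct from $1_G$, since the trivial isotypic component has been separated out as $2\cdot 1_G$---I would first record the two key inner products
$$[\varphi, 1_G]_G = 2 \qquad \text{and} \qquad [\varphi, \varphi]_G = (q-1) + 4 = q+3.$$

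Next, let $a$, $b$, $c$ denote the number of $g \in G$ with $\varphi(g) = 1$, $q$, $q^2$, respectively (so by (a) these three sets partition $G$). Translating the identity $|G|=a+b+c$ together with $\sum_g \varphi(g) = |G|[\varphi,1_G]_G$ and $\sum_g \varphi(g)^2 = |G|[\varphi,\varphi]_G$ (using that $\varphi(g)$ is real) yields the linear system
\begin{align*}
a + b + c &= |G|, \\
a + bq + cq^2 &= 2|G|, \\
a + bq^2 + cq^4 &= (q+3)|G|.
\end{align*}
The crucial reduction is $c = 1$: if $\varphi(g) = q^2 = \dim V$, then since $G$ is finite, $\Phi(g)$ has finite order and its eigenvalues are roots of unity of modulus $1$, whose sum can equal $\dim V$ only when every eigenvalue is $1$. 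Hence $\Phi(g) = I_V$, and by faithfulness of $\Phi$ this forces $g = 1$.

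With $c = 1$, eliminating $a$ and $b$ between the first two equations gives $b = (|G|+1-q^2)/(q-1)$; substituting into the third collapses to a single linear equation in $|G|$ whose solution is
$$|G| \;=\; q^4-q^3-q^2+q \;=\; q(q-1)^2(q+1) \;=\; |\GL_2(q)|,$$
which is the claim. I expect no serious obstacle. The one mildly delicate point is the justification that $\Phi_i \not\cong 1_G$ in the computation of $[\varphi,\varphi]_G$, but this is forced by the structure of hypothesis (b), which books the trivial isotypic component separately as $2\cdot 1_G$; the remainder is purely elementary character-theoretic counting.
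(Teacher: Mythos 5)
Your proof is correct and follows essentially the same route as the paper: partition $G$ by the trace value, use $[\varphi,1_G]_G=2$ and $[\varphi,\varphi]_G=q+3$ from the decomposition in (b), note that faithfulness forces exactly one element (the identity) to have trace $q^2$, and solve the resulting linear system for $|G|$. The paper simply writes $|G|=a+b+1$ directly rather than introducing a separate variable $c$ for the trace-$q^2$ count, but the argument is the same.
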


\begin{proof}
Let $a := \# \{g \in G \mid \Tr(\Phi(g))=q \}$ and let $b := \# \{g \in G \mid \Tr(\Phi(g))=1 \}$, so that $|G|=a+b+1$ by (a). 
The assumption (b) implies that 
$$2=[\varphi,1_G]_G = \frac{q^2+aq+b}{a+b+1},~~q+3 = [\varphi,\varphi]_G = \frac{q^4+aq^2+b}{a+b+1},$$
if $\varphi = \Tr(\Phi)$. Solving for $a$ and $b$, we obtain $a=q^3-2q-1$, $b=q^4-2q^3-q^2+3q$, and so 
$|G|= (q^2-1)(q^2-q)=|\GL_2(q)|$.
\end{proof}

The total Weil character $\tau_2=\tau_{2,q}$ of $\GL_2(q)$, cf. \eqref{tau1}, decomposes as 
$2 \cdot 1_{\GL_2(q)}+\sum^{q-2}_{i=0}\tau^i_2$, with $\tau^i_2 \in \Irr(\GL_2(q))$ of degree $q+1-\delta_{u,0}$ and 
pairwise distinct. The smaller-degree character $\tau^0_2$ restricts to the Steinberg character $\St$ of $L$.
Furthermore, if $1 \leq i \leq (q-2)/2$ then $\tau^i_2$ and $\tau^{q-1-i}_2$ restrict to the same 
irreducible character (denoted $\chi_i$ in \cite[\S38]{Do}) of $L=\SL_2(q)$, and those $\lfloor (q-2)/2 \rfloor$ characters 
are pairwise distinct. If $2 \nmid q$, then $(\tau^{(q-1)/2}_2)|_L$ is the sum of two distinct irreducible characters 
(denoted $\xi_1,\xi_2$ in \cite[\S38]{Do}) of degree $(q+1)/2$. We will refer to these characters $\chi_i$, and also $\xi_1,\xi_2$
when $2 \nmid q$, as {\it irreducible Weil characters} of $\SL_2(q)$, and $\tau_2$ (or rather $(\tau_2)|_L$) as the {\it total Weil 
character} of $\SL_2(q)$.

Now we prove an analogue of Theorem \ref{main-weil-sl} for $\SL_2(q)$.

\begin{thm}\label{weil-sl2}
Let $p$ be any prime, $q$ be any power of $p$, $q \geq 4$, and let $L=\SL_2(q)$. Suppose $\psi$ is 
a reducible complex character of $L$ such that
\begin{enumerate}[\rm(a)]
\item $\psi(1)=q^2$;
\item $\psi(g) \in \{q^i \mid 0 \leq i \leq 2\}$ for all $g \in L$; 
\item $[\psi,1_L]_L=2$; and 
\item every irreducible constituent of $\psi -2 \cdot 1_L$ is among the irreducible Weil characters $\St$, $\chi_i$, $0 \leq i \leq (q-2)/2$, 
and also $\xi_1,\xi_2$ when $2 \nmid q$, of $L$.
\end{enumerate}
Then $\psi$ is the total Weil character $\tau_2$ of $L$.
\end{thm}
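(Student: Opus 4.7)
I will mirror the Cauchy--Schwarz approach used in the proof of Theorem \ref{main-weil-sl} and supplement it with a Steinberg inner-product computation to handle the nontrivial center $Z(L)=\{\pm I\}$ that arises for $q$ odd.

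\emph{Step 1 (setup and the coefficient $a_0$).} Using (d), I write
\[
\psi = 2 \cdot 1_L + a_0 \St + \sum_i a_i \chi_i + b_1 \xi_1 + b_2 \xi_2
\]
with $a_0, a_i, b_j \in \Z_{\geq 0}$ (the $\xi_j$ terms present only when $q$ is odd, and the $\chi_i$ indices ranging over $i = 1, \ldots, N$, where $N = \lfloor (q-2)/2 \rfloor$). Evaluating at the two nontrivial unipotent classes $u, u'$ of $L$ (swapped by conjugation from $\GL_2(q)$, which also swaps $\xi_1 \leftrightarrow \xi_2$) and using $\St(u) = 0$, $\chi_i(u) = 1$, and $(\xi_1 + \xi_2)(u) = 1$, one obtains
\[
\psi(u) + \psi(u') = 4 + 2S + (b_1 + b_2), \qquad S := \sum_i a_i.
\]
Applying (b) to $\psi(u)$ and $\psi(u')$ separately, together with non-negativity of coefficients and the degree equation $q^2 = 2 + a_0 q + (q+1)\bigl(S + (b_1+b_2)/2\bigr)$, one rules out every possibility except $\psi(u) = \psi(u') = q$. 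This forces $a_0 = 1$ and $S + (b_1 + b_2)/2 = q - 2$.

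\emph{Step 2 (inner-product identities; the main obstacle is ruling out $n_2 = 2$).} Set $n_j := \#\{g \in L : \psi(g) = q^j\}$. From (b), (c), and $|L| = q(q^2-1)$, one derives $n_1 + (q+1) n_2 = q(q+1)$ and $[\psi,\psi]_L = (q+2) + (q-1) n_2$. Since $a_0 = 1$ forces $\St$ to be a constituent of $\psi$ and $\Ker(\St) = Z(L)$, we have $\{g : \psi(g) = q^2\} \subseteq Z(L)$, so $n_2 \in \{1, 2\}$ (with $n_2 = 1$ automatic for $q$ even). The harder task is to exclude $n_2 = 2$ when $q$ is odd. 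Combining $[\psi, \St]_L = a_0 = 1$ with $[\St, 1_L]_L = 0$ yields
\[
(q+1) A + B = q(q+1), \quad A := \sum_{\psi(g)=q^2}\St(g), \quad B := \sum_{\psi(g)=q}\St(g).
\]
Assuming $n_2 = 2$, one computes $A = \St(I) + \St(-I) = 2q$ (since $-I$ acts trivially on $\P^1(\F_q)$, so $\St(-I) = q$), hence $B = -q(q+1)$. But every $g \notin \{\pm I\}$ in $\SL_2(q)$ satisfies $\St(g) \in \{-1, 0, 1\}$ (Steinberg vanishes on unipotent classes and equals $\pm 1$ on regular semisimple classes), so
\[
|B| \leq n_1 = (q+1)(q-2) < q(q+1),
\]
a contradiction. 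Therefore $n_2 = 1$ and $[\psi, \psi]_L = 2q + 1$.

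\emph{Step 3 (Cauchy--Schwarz finish).} With $a_0 = 1$ and $n_2 = 1$, the decomposition gives $\sum_i a_i^2 + b_1^2 + b_2^2 = 2q - 4$. Combining this with $S + (b_1+b_2)/2 = q - 2$ and applying Cauchy--Schwarz $S^2 \leq N \sum_i a_i^2$ together with the AM--QM inequality $b_1^2 + b_2^2 \geq (b_1 + b_2)^2/2$, a short computation reduces to
\[
(q - 2) \bigl( (b_1 + b_2)/2 - 1 \bigr)^2 \leq 0.
\]
This forces $b_1 + b_2 = 2$, and the equality cases in Cauchy--Schwarz and AM--QM then force $b_1 = b_2 = 1$ and all $a_i$ equal, hence $a_i = 2$ for every $i$. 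Therefore $\psi = \tau_2$, as claimed.
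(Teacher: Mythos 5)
Your proof is correct, and it takes a genuinely different route from the paper's. Both arguments open the same way (the degree equation and the unipotent values force $a_0 = 1$, $b_1 = b_2$, and $S + (b_1+b_2)/2 = q-2$; the paper uses an element of order $q+1$ for $a_0 = 1$, but the degree identity alone suffices, as you observe). From there the methods diverge. The paper evaluates $\psi$ at the central involution and at powers of a torus element of order $q-1$, then converts the equations $\psi(y^l) = 1$ into the statement that an explicit polynomial of degree $q-2$ has all $(q-1)^{\rm st}$ roots of unity $\neq \pm 1$ as roots, and extracts the coefficients from the factorization. You instead compute the second moment: the counting identities $n_1 + (q+1)n_2 = q(q+1)$ and $[\psi,\psi]_L = q+2+(q-1)n_2$ reduce everything to pinning down $n_2$, and your Steinberg inner-product argument is a clean device for ruling out $n_2 = 2$ when $q$ is odd (the bound $|B| \le n_1 = (q+1)(q-2) < q(q+1)$ uses only that $\St$ takes values in $\{0,\pm 1\}$ off the center and $q$ on the center). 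Cauchy--Schwarz and AM--QM then force all $a_i = 2$ and $b_1 = b_2 = 1$ from $\sum a_i^2 + b_1^2 + b_2^2 = 2q-4$ and $S + (b_1+b_2)/2 = q-2$. One advantage of your route is that it never needs the precise values of the $\chi_i$ on torus elements; one advantage of the paper's is that the same polynomial-vanishing mechanism is used verbatim in the $n \ge 3$ case (Theorem \ref{main-weil-sl}), which incidentally does \emph{not} use Cauchy--Schwarz, so your opening remark that you are ``mirroring'' that proof is a slight misremembering, though it does not affect anything.
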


\begin{proof}
(i) We will use the character tables of $\SL_2(q)$, Theorem 38.1 of \cite{Do} for $2 \nmid q$ and Theorem 38.2 of \cite{Do} for $2|q$.
Write 
\begin{equation}\label{eq31}
  \psi= \left\{ \begin{array}{ll}2 \cdot 1_L + a \cdot \St + \sum^{(q-3)/2}_{i=1}b_i\chi_i + c_1\xi_1+c_2\xi_2, & 2 \nmid q,\\
     2 \cdot 1_L + a \cdot \St + \sum^{(q-2)/2}_{i=1}b_i\chi_i, & 2 |q, \end{array} \right.
\end{equation}  
with all coefficients $a,b_i,c_i \in \Z_{\geq 0}$.  Evaluating $\psi$ at an element $x$ of order $q+1$, we see by (b) that 
$\psi(y)=2-a \leq 2$ is a $q$-power, which is possible only when $a=1$, since $q \geq 3$. As before, let $\td$ denote a 
primitive $(q-1)^{\mathrm {th}}$ root of unity in $\C$.

First suppose that $2|q$. Then $\sum_i b_i = (q^2-q-2)/(q+1)=q-2$ by degree comparison in \eqref{eq31}. Next, we fix 
an element $y \in L$ of order $q-1$, and for $1 \leq l \leq (q-3)/2$ we have 
$$\psi(y^l) = 3 + \sum^{(q-2)/2}_{i=1}b_i\bigl(\td^{il}+\td^{-il}\bigr).$$
It follows that
$$\sum^{(q-2)/2}_{l=1}\psi(y^l) = 3(q-2)/2+\sum^{(q-2)/2}_{i=1}b_i\biggl(\sum^{(q-2)/2}_{l=1}\bigl(\td^{il}+\td^{-il}\bigr)\biggr) = 3(q-2)/2 -
    \sum^{(q-2)/2}_{i=1}b_i=(q-2)/2.$$
As each $\psi(y^l)$ is a $q$-power, we must have that $\psi(y^l)=1$ for all $1 \leq l \leq (q-2)/2$. Thus, the polynomial
$$f(t)=\sum^{(q-2)/2}_{i=1}b_i\bigl( t^{q-1-i}+t^i \bigr) + 2 \in \Q[t]$$
of degree $q-2$ has all $\td^l$, $1 \leq l \leq q-2$ as roots. Since $f(1) = 2\sum^{(q-2)/2}_{i=1}b_i +2=2q-2$, we conclude that
$f(t)=2(t^{q-1}-1)/(t-1)$, i.e. $b_i = 2$ for all $i$, and so $\psi=\tau_2$, as stated.  
  
\smallskip
(ii) Assume now that $2 \nmid q$. Then $\sum_i b_i + (c_1+c_2)/2= (q^2-q-2)/(q+1)=q-2$ by degree comparison in \eqref{eq31}. 
Evaluating $\psi$ at an element $u \in L$ of order $p$ and another element $v \in L$ of order $p$ that is not conjugate to $u$, we obtain
$$\psi(u) = 2+ \sum_i b_i + \frac{c_1+c_2}{2} + \sqrt{\eps q}\frac{c_1-c_2}{2},~\psi(v) = 2+ \sum_i b_i + \frac{c_1+c_2}{2} - 
    \sqrt{\eps q}\frac{c_1-c_2}{2},$$
where $\eps:=(-1)^{(q-1)/2}$. Thus $\psi(u)+\psi(v)=2q$. As each of $\psi(u)$, $\psi(v)$ is a $q$-power, we must have that
$\psi(u)=q$, whence $c_1=c_2=:c$, and so
$$\sum^{(q-3)/2}_{i=1}b_i +c = q-2.$$
Next we evaluate $\psi$ at the central involution $\bj$ of $L$:
$$\psi(\bj) = 2+q+(q+1)\sum_ib_i(-1)^i + c\eps(q+1).$$
In particular, 
$$q^2-\psi(\bj)= \psi(1)-\psi(\bj) = 2(q+1)\bigl(\sum_{2 \nmid i}b_i+\frac{1-\eps}{2}c\bigr)$$
is divisible by $2(q+1)$. On the other hand, $\psi(\bj) \in \{1,q,q^2\}$, so $\psi(\bj) \neq q$, and either 
\begin{equation}\label{eq32a}
  \psi(\bj)=q^2,~\sum_{2 \nmid i}b_i+\frac{1-\eps}{2}c=0,~\sum_{2|i}b_i+\frac{1+\eps}{2}c=q-2,
\end{equation}
or
\begin{equation}\label{eq32b}
  \psi(\bj)=1,~\sum_{2 \nmid i}b_i+\frac{1-\eps}{2}c=\frac{q-1}{2},~\sum_{2|i}b_i+\frac{1+\eps}{2}c=\frac{q-3}{2}.
\end{equation}  
As above, we fix an element $y \in L$ of order $q-1$, and for $1 \leq l \leq (q-3)/2$ we then have 
$$\psi(y^l) = 3 + \sum^{(q-3)/2}_{i=1}b_i\bigl(\td^{il}+\td^{-il}\bigr)+2c(-1)^l.$$
It follows that
$$\begin{aligned}\sum^{(q-3)/2}_{l=1}\psi(y^l) & 
    = 3(q-3)/2+\sum^{(q-3)/2}_{i=1}b_i\biggl(\sum^{(q-3)/2}_{l=1}\bigl(\td^{il}+\td^{-il}\bigr)\biggr) +2c\sum^{(q-3)/2}_{l=1}(-1)^l\\
    &  = 3(q-3)/2 +\sum^{(q-2)/2}_{i=1}b_i\bigl(-1-(-1)^i\bigr) -c(1+\eps)\\
    & = 3(q-3)/2 -2\bigl(\sum_{2|i}b_i+c(1+\eps)/2\bigr).\end{aligned}$$
In the case of \eqref{eq32a}, $1 \leq \sum_l\psi(y^l) = 3(q-3)/2-2(q-2) = (5-q)/2 \leq 0$, a contradiction. Hence \eqref{eq32b} holds,
and we have that $\sum^{(q-3)/2}_{l=1}\psi(y^l) = 3(q-3)/2-(q-3)=(q-3)/2$.     
As each $\psi(y^l)$ is a $q$-power, we must have that $\psi(y^l)=1$ for all $1 \leq l \leq (q-3)/2$. Thus, the polynomial
$$g(t)=\sum^{(q-3)/2}_{i=1}b_i\bigl( t^{q-1-i}+t^i \bigr) + 2ct^{(q-1)/2}+2 \in \Q[t]$$
of degree $q-2$ has all $\td^l \neq \pm 1$, $0 \leq l \leq q-2$, as roots, and so 
$g(t)=(at+b)(t^{q-1}-1)/(t^2-1)$ for some $a,b \in \Q$. Since $b=g(0)=2$ and $(a+b)(q-1)/2=g(1) = 2\sum^{(q-3)/2}_{i=1}b_i +2c+2=2q-2$, we conclude that $a=b=2$, $g(t)=2(t^{q-1}-1)/(t-1)$, i.e. $b_i = 2$ for all $i$ and $c_1=c_2=1$, and so $\psi=\tau_2$, as stated.      
\end{proof}

Recall that a subgroup $Y$ of a group $X$ is a {\it characteristic subgroup} of $X$, $Y \chs X$, if $\phi(Y) \leq Y$ for all
$\phi \in \Aut(X)$.

\begin{prop}\label{sl2-ext}
Let $q \geq 4$ be a prime power and let $X$ be a finite group with a normal subgroup $K$ of order dividing $q-1$ such 
that $X/K \cong S:=\PSL_2(q)$. Then the following statements hold.
\begin{enumerate}[\rm(i)]
\item $K \chs X$.
\item $X$ contains a characteristic subgroup $D$ such that $D$ is quasisimple and $D/\ZB(D) \cong S$.
\end{enumerate}
\end{prop}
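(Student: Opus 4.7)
For part (i), the plan is to identify $K$ with $O_{p'}(X)$, where $p$ is the prime dividing $q$; this will automatically make $K$ characteristic. Since $|K|$ divides $q-1$ and hence is coprime to $p$, the containment $K \leq O_{p'}(X)$ is automatic. For the reverse, the image of $O_{p'}(X)$ in $X/K \cong S = \PSL_2(q)$ is a $p'$-subgroup that is normal in the simple group $S$; since $S$ is not a $p'$-group ($p$ divides $|S|$), this image must be trivial, giving $O_{p'}(X) \leq K$.

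For part (ii), the plan is to take the centralizer $C := C_X(K)$, which is characteristic in $X$ (being the centralizer of a characteristic subgroup), to prove that $X = CK$, and then to take $D := C^{(\infty)}$. The conjugation action $X \to \Aut(K)$ has kernel $C$ and induces an embedding $X/(CK) \hookrightarrow \Out(K)$. Since $X/K \cong S$ is simple, establishing $X = CK$ reduces to ruling out an embedding $S \hookrightarrow \Aut(K)$.

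The main obstacle is this non-embedding statement: $\PSL_2(q) \not\hookrightarrow \Aut(K)$ whenever $|K|$ divides $q-1$ and $q \geq 4$. My approach is that any such embedding would force a faithful action of $\PSL_2(q)$ on $K \smallsetminus \{1\}$, i.e., a faithful permutation representation of degree at most $q-2$; I will then invoke the classical computation (via Dickson's classification of subgroups of $\PSL_2(q)$) that the minimum faithful permutation degree of $\PSL_2(q)$ is at least $q-1$ for every $q \geq 4$, with the single exception of $q=9$. The outlier $q=9$, where $\PSL_2(9) \cong \alt_6$ acts faithfully on $6$ points, will be handled by direct inspection of groups of order dividing $8$: the largest automorphism group that appears is $\Aut((C_2)^3) \cong \GL_3(2)$ of order $168 < 360 = |\alt_6|$.

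Granted $X = CK$, one obtains $C/\ZB(K) = C/(C \cap K) \cong X/K \cong S$, so $C$ is a central extension of $S$. Setting $D := C^{(\infty)}$, the terminal term of the derived series of $C$, gives a subgroup that is characteristic in $C$ and hence in $X$, and perfect by construction. Since $S$ is perfect, $D$ still surjects onto $S$, so $D/(D \cap K) \cong S$. The crucial observation is that $D \leq C$ centralizes $K$, which forces $D \cap K \leq \ZB(D)$. A simplicity argument will then finish the proof: $\ZB(D)/(D \cap K)$ is normal in $D/(D \cap K) \cong S$, so is either trivial or all of $S$; the latter would make $D$ equal to $\ZB(D)$, absurd for a nontrivial perfect group. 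Hence $\ZB(D) = D \cap K$ and $D/\ZB(D) \cong S$, so $D$ is the desired quasisimple subgroup with $D/\ZB(D) \cong S$.
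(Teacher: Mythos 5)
Your proof of part (i) is correct and in fact cleaner than the paper's: identifying $K$ with $O_{p'}(X)$ is a nice observation, whereas the paper argues directly that every $\phi(K)$ is a normal subgroup whose image in $S$ has order $<|S|$ and hence is trivial. Either way works.

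For part (ii), however, there is a genuine gap in your reduction. The conjugation map $X\to \Aut(K)$ induces, as you say, an embedding $X/(CK)\hookrightarrow \Out(K)$; so if $CK\neq X$ (forcing $C\leq K$ by simplicity of $S$) what you obtain is an embedding $S\hookrightarrow \Out(K)$, \emph{not} $S\hookrightarrow \Aut(K)$. For abelian $K$ these coincide, and your minimal-degree argument (together with the direct inspection at $q=9$) correctly rules this out. But when $K$ is non-abelian, $\Out(K)=\Aut(K)/\Inn(K)$ is a proper quotient, and it does not act on $K\smallsetminus\{1\}$, so the minimal permutation degree of $\PSL_2(q)$ cannot be invoked as you do. One could try to patch this by observing that $S$ is then a \emph{section} of $\Aut(K)\leq\sym_{|K|-1}\leq\sym_{q-2}$ and invoking the nontrivial lemma that a non-abelian simple section of $\sym_n$ has minimal faithful permutation degree at most $n$ --- but that lemma is neither stated nor proved in your write-up, and it is doing real work here. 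The paper instead avoids this issue entirely by running an induction on $|K|$: for abelian $K$ it shows $K\leq \ZB(X)$ directly from $P(S)\geq q$; for non-abelian $K$ it produces an element $x\in K$ with $\CB_X(x)<X$, uses the index bound $P(S)\geq q$ to force $K\CB_X(x)=X$, applies the inductive hypothesis inside $\CB_X(x)$ to manufacture a cover $R$ of $S$ centralizing $K$, and only then concludes $KC=X$. Once $KC=X$ is in hand your finishing argument ($D:=C^{(\infty)}$, centrality of $D\cap K$, and the simplicity argument giving $\ZB(D)=D\cap K$) is correct and streamlines what the paper does by a further induction, so the non-abelian step in establishing $X=CK$ is the one place that needs repair.
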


\begin{proof}
First we prove (i). Consider any $\phi \in \Aut(X)$. Then $\phi(K) \lhd X$ and so $\phi(K)K/K$ is a normal subgroup of $S$ of order 
dividing $q-1$. As $S$ is simple of order $> q-1$, $\phi(K) = K$, and so $K \chs X$.

To prove (ii), we proceed by induction on $|K|$, with the induction base being trivial. For the induction base, as usual let $P(S)$ denote the smallest index of proper subgroups of $S = \PSL_2(q)$. 
By \cite[Table 5.2.A]{KlL},
\begin{equation}\label{ext11}
  P(S) \geq q > |K|,
\end{equation}
unless $q=9$, for which we have $P(S)=6$.

\smallskip
(a) First we consider the case where $K$ is abelian. We claim that 
\begin{equation}\label{ext12}
  K \leq \ZB(X). 
\end{equation}
Indeed, $K$ centralizes $K$, and so the conjugation
induces a permutation action of $S=X/K$ on $K \smallsetminus \{1\}$, of size $q-2$. If $q \neq 9$, then \eqref{ext11} implies that 
any transitive permutation of $S$ of degree less than $q$ is trivial, and thus any $S$-orbit on $K$ has length $1$, and so $K \leq \ZB(X)$
as stated. Consider the case $q=9$ and suppose that $K \not\leq \CB_X(K)$. Then 
$K \in \{C_2^3,C_4 \times C_2,C_8\}$, $\CB_X(K) = K$ (as $X/K=S$ is simple), and $S=X/K$ embeds in $\Aut(K)$, which is 
either $\SL_3(2)$ or solvable. This is a contradiction, since $S = \PSL_2(9)$ is simple of order $360$ and $|\SL_3(2)| = 168$.

Now we take $D:=X^{(\infty)} \chs X$. Then $D$ has $S$ as a composition factor, and so does $KD$, which contains $K$. Since 
$|K| < |S|$, $S$ must be a composition factor of $KD/K$. It follows that
$KD=X$, $D/(K \cap D) \cong KD/K = X/K =S$. Since $K \cap D \leq \ZB(D)$ by \eqref{ext12}, we see that $D$ is a cover of $S$, as desired.

\smallskip
(b) Now we may assume $K$ is non-abelian, in particular, $C:=\CB_X(K)$ is a proper characteristic subgroup of $X$, and $q \geq 7$. 
We aim to show that $KC=X$. 

Consider the case $q=9$. As $K$ has order dividing $8$ 
and $K$ is non-abelian, $K$ is $D_8$ (dihedral) or $Q_8$ (quaternion). In both cases, $X/C$ embeds in $\Aut(K)$, which is solvable.
As $S$ is a quotient of $X$, it follows that $S$ is a composition factor of $C$, and so the same holds for $KC$. As in (i), we infer from this
that $KC=X$, as stated.

Suppose now that $q \neq 9$. As $C< K$, there exists some $1 \neq x \in K$
such that $Y:=\CB_X(x) < X$. Then $[X:Y] \leq |K|-1 \leq q-2$, and so
$[S:KY/K] = [X:KY] \leq q-2$. Hence \eqref{ext11} implies that $KY=X$. Now $K \cap Y$ is a normal subgroup of $Y$ of 
order dividing $q-1$, and $Y/(K \cap Y) \cong KY/K = X/K = S$. Note that $|K \cap Y| < |K|$, as otherwise $Y=X$, contradicting the choice
of $x$. Hence we may apply the induction hypothesis to $Y$ and find a characteristic subgroup $R$ of $Y$ that is a cover of $S$.  
As $q \neq 4,9$, $R = S$, or $2 \nmid q$ and $R \cong L:=\SL_2(q)$. Now, \eqref{ext11} shows that proper subgroups of $S$ have 
index $\geq q$. The same also holds for $L$. (Assume the contrary: $M < L$ and $[L:M] < q$. Then $q > [L:MZ] = [S:MZ/Z]$ for
$Z := \ZB(L)$, and so $MZ=L$ by \eqref{ext11}. As $M < L$ and $|Z|=2$, we then have $M \cap Z = 1$, and $L = M \times Z$,  
a contradiction.) Thus in either case proper subgroups of $R$ have index $\geq q$. As $|K| < q$, this implies that all $R$-orbits on
$K$ have length $1$, i.e. $R \leq C$. Now $R$, and so $KR$, admits $S$ as a composition factor. Arguing as above, we see that 
$X=KR=KC$.

\smallskip
(c) We have shown that $KC=X$ for a proper characteristic subgroup $C$ that does not contain $K$. It follows that $|K \cap C| < |K|$,
$C/(K \cap C) \cong KC/K = X/K = S$. By the induction hypothesis applied to $C$, $C$ contains a subgroup $D \chs C$ that is 
a cover of $S$. As $C \chs X$, we conclude that $D \chs X$, as desired.
\end{proof}

Now we can prove the main result of this section:

\begin{thm}\label{main-sl21}
Let $q=p^f \geq 4$ be a power of a prime $p$, and $N:=q^2-1$. Let $G \leq \sym_N$ be a subgroup with the following properties:
\begin{enumerate}[\rm(a)]
\item If $\Phi$ denotes the representation of $\sym_N$ on its natural permutation module $\C^N$, then 
$$\Phi|_G = \oplus^{q-2}_{i=0}\Phi_i \oplus 1_G,$$ 
where $\Phi_i \in \Irr(G)$ have degree $q+1-\delta_{i,0}$ and all pairwise inequivalent.
\item $G_0:= \Phi_0(G)$ embeds in $\sym_{q+1}$ as the subgroup $\PGL_2(q)$ acting on $N_0:=q+1$ lines of
$\F_q^2$, in such a way that $\Phi_0$ is the restriction to
$G_0$ of the representation of $\sym_{q+1}$ on its deleted permutation module $\C^q$. 
\item For every $g \in G$, $\Tr(\Phi(g))+1$ is a $q$-power.
\end{enumerate}
Then $G \cong \GL(W) = \GL_2(q)$ for $W=\F_q^2$. Moreover, $\Phi|_G$ is equivalent to the permutation action of 
$G$ on the set $\Omega$ of nonzero vectors of $W$.
\end{thm}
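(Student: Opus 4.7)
The plan is to mirror the strategy of Theorem~\ref{main-sl1}, with Lemma~\ref{order}, Proposition~\ref{sl2-ext}, and Theorem~\ref{weil-sl2} taking the place of the tools that required $n \geq 3$. First, I would augment the permutation representation by re-adjoining the zero vector: set $\Phi' := \Phi|_G \oplus 1_G$, a faithful representation of $G$ on $\C^{q^2}$. By (a), $\Phi' = 2 \cdot 1_G \oplus \bigoplus_{i=0}^{q-2} \Phi_i$ with pairwise inequivalent constituents of degrees $q$ or $q+1$, and (c) forces $\Tr(\Phi'(g)) = \Tr(\Phi(g))+1 \in \{1,q,q^2\}$ for all $g \in G$. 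Lemma~\ref{order} then yields $|G| = |\GL_2(q)|$. With $K := \Ker(\Phi_0|_G) \lhd G$, hypothesis (b) identifies $G/K$ with $\PGL_2(q)$, whence $|K| = q-1$.

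Next, let $M \lhd G$ be the preimage of the derived subgroup $\PSL_2(q) = [\PGL_2(q), \PGL_2(q)]$, so $M/K \cong \PSL_2(q)$ with $|K|=q-1$. Proposition~\ref{sl2-ext}(ii) produces a characteristic (in $M$) quasisimple subgroup $D$ with $D/\ZB(D) \cong \PSL_2(q)$, and $D \lhd G$ since $D \chs M \lhd G$. I then restrict each $\Phi_i$ to $D$: as $\Phi_i$ is irreducible over $G$ with $D \lhd G$, Clifford's theorem forces $\Phi_i|_D$ to be a sum of $G$-conjugate irreducible constituents of degrees dividing $\dim \Phi_i \in \{q, q+1\}$. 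By \cite[Theorem 3.1]{TZ1} on small-degree projective irreducible representations of $\PSL_2(q)$, with direct verification for $q = 9$ where the Schur multiplier of $\alt_6$ is exceptional, each constituent is a Weil character of $\SL_2(q)$; in particular $D \cong \SL_2(q)$.

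Then I would apply Theorem~\ref{weil-sl2} to $\psi := \varphi|_D + 1_D$, where $\varphi := \Tr\Phi$. Conditions (a), (b), and (d) are immediate from the previous step and (c). Condition (c) of Theorem~\ref{weil-sl2}, namely $[\psi, 1_D]_D = 2$, amounts to transitivity of $D$ on $\Omega$; this I would deduce from the fact that $M$ is transitive on $\Omega$ (since $\Phi_0|_M$ is the Steinberg character of $\PSL_2(q)$ and $\Phi_i|_M$ for $i \geq 1$ has no trivial component by Clifford applied to $M \lhd G$), together with the orbit-length computation $|D|/|\Omega| = q$ that rules out any partition of $\Omega$ into proper $D$-orbits. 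Theorem~\ref{weil-sl2} then gives $\psi = \tau_2$, the total Weil character of $\SL_2(q)$.

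Finally, Lemma~\ref{trans-sl}(ii) identifies the transitive action of $D = \SL_2(q)$ on $\Omega$ with the natural action on the nonzero vectors of $W = \F_q^2$ (no inverse-transpose twist is needed, as that automorphism is inner on $\SL_2$), while Lemma~\ref{cent-sl} identifies $\CB_{\sym_N}(D)$ with $\ZB(\GL(W))$. Since every element of $G$ normalizes $D$ and induces only an inner-diagonal automorphism of it (because $G/\CB_G(D)$ embeds in $\Aut(D)$ while fixing $\tau_2$, which rules out nontrivial field automorphisms), one concludes $G \leq \GL(W)$, and the order equality $|G| = |\GL_2(q)|$ forces $G = \GL(W)$. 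The main obstacle I anticipate is the Clifford-theoretic identification of $\Phi_i|_D$ as Weil characters, especially at the middle index $i = (q-1)/2$ for odd $q$, where the $\GL_2(q)$-Weil character restricts to $\SL_2(q)$ as $\xi_1 + \xi_2$ and the Clifford decomposition must be matched carefully to this splitting.
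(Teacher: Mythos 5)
Your proposal follows the same overall structure as the paper's proof (Lemma \ref{order} for the group order, Proposition \ref{sl2-ext} for the quasisimple normal subgroup $D$, \cite[Theorem 3.1]{TZ1} for identifying small-degree constituents, Theorem \ref{weil-sl2} to pin down the total Weil character, and Lemma \ref{trans-sl}, Lemma \ref{cent-sl} for the final identification inside $\sym_N$). However, there is a genuine gap at the Clifford-theoretic step.

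When you write that Clifford's theorem and \cite[Theorem 3.1]{TZ1} force each constituent of $\Phi_i|_D$ to be a Weil character, you have implicitly assumed those constituents are nontrivial. Clifford's theorem only gives that the constituents share a common degree $d_i$ dividing $\dim\Phi_i$, and $d_i=1$ is a live possibility a priori; in that case $\Phi_i|_D$ is a sum of trivial characters (since $D$ is perfect), which are not Weil characters, and the argument collapses. The paper rules this out explicitly: it first notes $KD=M$ (you never establish this, though it is easy since $KD/K$ is a nontrivial normal subgroup of the simple group $M/K$), and then argues that $d_i=1$ would force the constituents of $(\Phi_i)|_M$ to be irreducible over $K$, hence of degree $\le \sqrt{q-1}$, giving $N_0 \le \sqrt{q-1}\cdot\gcd(2,q-1) < q+1$, a contradiction. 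Some argument of this shape is unavoidable. (For $q=5$ an extra step is also needed, since \cite[Theorem 3.1]{TZ1} permits a constituent of degree $N_0/3=2$ there; the paper handles this by first showing $K\le \ZB(M)$ and using $|G/M|\le 2$.)

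Relatedly, the transitivity argument you give for verifying hypothesis (c) of Theorem \ref{weil-sl2} does not work as stated: the mere fact that $|D|/|\Omega|=q$ does not preclude $D$ from having several orbits on $\Omega$ (e.g., $t$ orbits each of length $(q^2-1)/t$ for $t\mid q-1$ is not ruled out by order considerations alone). But this detour is also unnecessary: once one knows every irreducible constituent of each $(\Phi_i)|_D$ has degree $>1$, it follows directly that $[\psi,1_D]_D=2$, which is how the paper proceeds. Finally, the claim that field automorphisms are ruled out "because $G$ fixes $\tau_2$" is imprecise — field automorphisms do fix the total Weil character $\tau_2$; what one actually uses is that some individual $\Phi_i$ (a $G$-invariant representation) restricts to a specific $\chi_i$ that is not fixed by any nontrivial field automorphism of $D$.
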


\begin{proof}
(i) Let $\varphi$, respectively $\varphi_i$, denote the character of $\Phi$, respectively $\Phi_i$. 
Also, let $K$ denote the kernel of $\Phi_0$, so that $G/K \cong G_0$. By Lemma \ref{order}, $|G| = |\GL_2(q)|$,
and by (b), $G_0 \cong \PGL_2(q)$. It follows that
\begin{equation}\label{eq:sl211}
  |K| =q-1.
\end{equation}
Let $S = \soc(G_0) \cong \PSL_2(q)$. Using \cite[Theorem 3.1]{TZ1}, and also \cite{ATLAS} when
$S = \PSL_2(9)$, we can check that
\begin{equation}\label{eq:sl212}
  \begin{array}{c}\mbox{Any nontrivial projective irreducible complex representation of }S\\
  \mbox{ of degree dividing }N_0
  \mbox{ is a linear representation of }L:=\SL_2(q)\\
  \mbox{ of degree }N_0, \mbox{ or }N_0/2 \mbox{ when }2 \nmid q, \mbox{ or }N_0/3 \mbox{ when }q=5.\end{array} 
\end{equation}
Let $M >K$ be the normal subgroup of $G$ such that $M/K = S$; note that 
\begin{equation}\label{eq:sl213}
  |G/M|=\gcd(2,q-1)
\end{equation}  
as $G/K=G_0 \cong \PGL_2(q)$. By Proposition \ref{sl2-ext}, $M$ contains a subgroup
$D \chs M$ that is a cover of $S$. As $M \lhd G$, $D$ is normal in $G$. 

\smallskip
(ii) We also note that $KD=M$ (as $M/K \cong S$ and $S = D/\ZB(D)$). 
Now, as $K = \Ker(\Phi_0)$, $(\Phi_0)|_D$ is irreducible of degree $N_0-1$. 

Recall that, for any $i > 0$, $\Phi_i$ is irreducible of degree $N_0$. Let $d_i$ denote the common
degree of irreducible constituents $\Phi_{ij}$ of $(\Phi_i)|_D$. If $d_i=1$, then $\Phi_{ij}=1_D$ as $D$ is perfect. Thus 
$(\Phi_i)|_D$ is trivial. So every irreducible constituent $\Psi_{ij}$ of $(\Phi_i)|_M$ is now irreducible over $K$, and so has degree
at most $\leq \sqrt{q-1}$ by \eqref{eq:sl211}. Together with \eqref{eq:sl213}, this implies that 
$N_0 \leq \sqrt{q-1}\cdot\gcd(2,q-1) < q+1$, a contradiction. Thus $d_i > 1$.

In the case $q=5$, $|K|=4$, hence $K$ is abelian, and part (a) pf the proof of Proposition \ref{sl2-ext} shows that we can take 
$D=M^{(\infty)} = G^{(\infty)}$ (with the second equality following from \eqref{eq:sl213}) and that $K \leq \ZB(M)$. Thus $K$ acts 
via scalars on $\Phi_i$. Now, \eqref{eq:sl213} shows that every $\Psi_{ij}$ has degree $N_0$ or $N_0/2$ and it is irreducible
over $D$, as $M=KD$. Thus $d_i = N_0$ or $N_0/2$ in this case.

Using \eqref{eq:sl212} for $q \neq 5$, we now see that $d_i = N_0/2$ or $N_0$, 
and that every irreducible constituent of $(\Phi_i)|_D$ comes from 
a linear irreducible representation of $L=\SL_2(q)$. The same is true for $i=0$. Ignoring the faithfulness of $\Phi$ (only in this 
paragraph of the proof), we may therefore replace $D$ by $L = \SL_2(q)$. Applying \cite[Theorem 3.1]{TZ1}, we see that 
each $(\Phi_i)|_L$ is a sum of irreducible Weil representations. Now we can apply Theorem \ref{weil-sl2} to $\psi = \varphi|_L+1_L$
to conclude that $\psi=\tau_2$, the total Weil character of $L$. As $\tau_2$ is faithful, we also see that $D = \SL_2(q)$.

\smallskip
(iii) As $q \geq 4$, at least one irreducible constituent of degree $N_0$ of $\tau_2$ ($\chi_i$ in the notation of Theorem \ref{weil-sl2}, and which corresponds to a faithful character of $\ZB(\GL_2(q))$ when we extend $\chi_i$ to $\GL_2(q)$), is fixed by diagonal automorphisms but not by any nontrivial field automorphism of $D$. Thus $G$ can induce only inner and diagonal automorphisms of $D$, that is, 
\begin{equation}\label{eq:sl214}
  G/\CB_G(D) \leq \PGL_2(q).
\end{equation}  
 We now return to the assumption that $G \leq \sym_N$ with $N = q^2-1$. Since $\varphi|_D= \tau_2-1_D$, we see that $D=\SL_2(q)$
acts transitively in the natural permutation action of $\sym_N$. Applying Lemma \ref{trans-sl}(ii), we see that this is the permutation action
of $D$ on the set $\Omega$ of nonzero vectors of $W=\F_q^2$. 
Consider any $h \in G$. By \eqref{eq:sl214}, the conjugation by $h$ induces an inner-diagonal automorphism of $L = \SL(W)$. 
On the other hand, the action of $L$ on $\Omega$ extends to the natural action of $\GL(W)$ on $\Omega$. Hence we can find 
$h' \in \GL(W) < \sym_N$ such that $h$ and $h'$ induce the same automorphism of $L$. Thus $(h')^{-1}h \in \sym_N$ centralizes 
$L$, whence it belongs to $\GL(W)$ by Lemma \ref{cent-sl}. We conclude that $h \in \GL(W)$, i.e. $G \leq \GL(W) \cong \GL_2(q)$.
Since $|G|=|\GL_2(q)|$, we have that $G = \GL(W)$, as stated.
\end{proof}

\section{The structure of monodromy groups}

\begin{thm}\label{main-sl2}
Let $q$ be a power of a prime $p$, $n \geq 2$, $q \geq 4$ when $n =2$, and let $K$ be an extension of $\F_q$. 
Then for the geometric and arithmetic
monodromy groups $G_{\geom}$ and $G_{\arith}$ of the local system $\sW(n,q)$ over $\G_m/K$ we have
$$G_\geom = G_\arith \cong \GL_n(q),$$
with the groups acting on $\sW(n,q) \oplus \overline{\Q_{\ell}} \cong F_\star\overline{\Q_\ell}$ 
as in its natural permutation action on the set $\Omega$ of nonzero
vectors of the natural module $\F_q^n$. 
\end{thm}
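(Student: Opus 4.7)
The plan is to identify $G_\arith$ concretely as a Galois group via the covering $F$, verify the axiomatic hypotheses of Theorem \ref{main-sl1} for $n \geq 3$ (resp.\ Theorem \ref{main-sl21} for $n = 2, q \geq 4$), and then, when $n \geq 3$, use the geometric determinants from Lemma \ref{geom-det} to promote the containment $\SL_n(q) \lhd G_\arith \leq \GL_n(q)$ to an equality. For $n = 2$, Theorem \ref{main-sl21} already outputs $\GL_2(q)$ on the nose.

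By Corollary \ref{inducedter}, $\sW(n,q) \oplus \overline{\Q_\ell} \cong \inv^\star F_\star \overline{\Q_\ell}$ arithmetically, and Lemma \ref{galident} applied to $F$ realizes $G_\arith(\sW(n,q))$ as a subgroup $G \leq \sym_N$ ($N := q^n - 1$) acting via the natural permutation representation $\Phi$ on $N$ points. Hypothesis (a) then follows from the splitting $\sW(n,q) = \bigoplus_{\chi \in \Char(q-1)} \sF_\chi$: each $\sF_\chi$ is geometrically isomorphic to the irreducible hypergeometric $\sH_\chi$, and Lemma \ref{geom-det} exhibits the constituents as pairwise inequivalent via their pairwise distinct geometric determinants $\sL_\chi$ or $\sL_{\chi\chi_2}$. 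Hypothesis (c) is immediate from Theorem \ref{correcttracevalues}(ii): for $K \supseteq \F_q$ and $u \in K^\times$, the trace of $F_\star \overline{\Q_\ell}$ is $q^a - 1$ with $0 \leq a \leq n$, so $\Tr(\Phi(g)) + 1$ is always a $q$-power.

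The real work is hypothesis (b). Corollary \ref{induced} identifies $\Phi_0(G) = G_\arith(\sF_\triv) = G_\arith(f_\star \overline{\Q_\ell}/\overline{\Q_\ell})$, and Lemma \ref{galident} applied to $f(t) = t^B(1-t)^{A-B}$ embeds this into $\sym_{N_0}$ ($N_0 = A = \deg f$) as the deleted permutation representation on the roots of $f(t) = u$. For the element of order $N_0$, I use the tame $I(0)$-action on $\sH_\triv$, whose eigenvalues are precisely the nontrivial $A$-th roots of unity, yielding a cyclic element of order $A = N_0$ with simple spectrum. For the $p$-subgroup of order $q^{n-1}$, I read off the wild part of $I(\infty)$: since $\sH_\triv$ has hypergeometric type $(A-1, B)$ with $A - B = q^{n-1}$, standard wild-inertia analysis at $\infty$ supplies a $p$-group of the required order inside $G_0$. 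For $n = 2$, Theorem \ref{main-sl21} instead demands the sharper fact that $G_0 \cong \PGL_2(q)$ acts on $N_0 = q+1$ points as on the $q+1$ lines of $\F_q^2$; this is extracted by combining the classification of doubly-transitive subgroups of $\sym_{q+1}$ with the local monodromy just described and with Lemma \ref{order} applied to the augmented $q^2$-dimensional representation.

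Theorem \ref{main-sl1} then gives $\SL_n(q) = G^{(\infty)} \lhd G_\arith \leq \GL_n(q)$ with $\Phi$ the natural permutation action on $\Omega$; Theorem \ref{main-sl21} directly yields $G_\arith \cong \GL_2(q)$ when $n = 2$. To close the gap for $n \geq 3$, I invoke Lemma \ref{geom-det} once more: the determinants $\det \sF_\chi$, as $\chi$ ranges over $\Char(q-1)$, give $q-1$ pairwise distinct one-dimensional characters of $G_\geom$, each annihilating the perfect subgroup $\SL_n(q)$; since $G_\geom/\SL_n(q)$ embeds into $\GL_n(q)/\SL_n(q) \cong \F_q^\times$ (which has order exactly $q-1$), this forces $G_\geom = \GL_n(q)$, and the sandwich $G_\geom \leq G_\arith \leq \GL_n(q)$ yields $G_\geom = G_\arith = \GL_n(q)$. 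The main obstacle is clearly the local-monodromy analysis in (b), and especially (for $n = 2$) the identification of $G_0$ with the specific $\PGL_2(q)$-action on $q+1$ lines, since the axiomatic theorems require a concrete geometric action rather than just an abstract isomorphism type; the determinant-based upgrade for $n \geq 3$ is routine by comparison.
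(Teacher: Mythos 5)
Your overall architecture matches the paper closely: descend to the Galois group via $F_\star\overline{\Q_\ell}$ and Lemma~\ref{galident}, verify the hypotheses of Theorem~\ref{main-sl1} (resp.\ Theorem~\ref{main-sl21}), and close the determinant gap. For $n \ge 3$ your finishing move is a genuinely cleaner reformulation of the paper's: rather than working with a Singer cycle $g$ and computing determinants on the eigenspaces $V_j$, you observe that the $q-1$ geometric determinants $\det\sF_\chi$ from Lemma~\ref{geom-det} are pairwise distinct characters of $G_\geom$, all killing the perfect normal subgroup $\SL_n(q)$, and so must exhaust the characters of the cyclic group $G_\geom/\SL_n(q) \le \F_q^\times$, forcing $|G_\geom/\SL_n(q)| = q-1$. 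That is a tidy packaging of the same input and is correct.

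Where your proposal diverges from (and is weaker than) the paper is the $n=2$ case, on two counts. First, to verify hypothesis (b) of Theorem~\ref{main-sl21} — that $\Phi_0(G)$ \emph{is} $\PGL_2(q)$ in its action on $q+1$ lines — the paper simply cites the Gross $\PGL_2(q)$ hypergeometric sheaf (\cite[\S13]{KT1}): $\sH_\triv$ for $n=2$ is that sheaf, so $\Phi_0(G_\geom) = \PGL_2(q)$ is known a priori. You propose instead to deduce this from the classification of doubly transitive groups plus Lemma~\ref{order}. That can be made to work (once $|G|=|\GL_2(q)|$ is known, order divisibility rules out $\alt_{q+1}$ and $\sym_{q+1}$ for $q\ge 5$, and for $q=4$ one has $\alt_5 \cong \PGL_2(4)$ anyway), but you would need to actually carry out this case analysis and verify that the action is the line action, not just an abstract isomorphism — you correctly flag this as the obstacle but do not resolve it. Second, you write that Theorem~\ref{main-sl21} ``directly yields $G_\arith \cong \GL_2(q)$'': that requires checking hypothesis (b) for $G_\arith$, not just $G_\geom$, and the local monodromy elements you produce a priori only witness subgroups of $\Phi_0(G_\geom)$. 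The paper avoids this by applying Theorem~\ref{main-sl21} to $G=G_\geom$ alone, obtaining $G_\geom = \GL_2(q)$, and then invoking Lemma~\ref{order} for $G = G_\arith$ to get $|G_\arith| = |\GL_2(q)| = |G_\geom|$, whence $G_\arith = G_\geom$ by the containment $G_\geom \le G_\arith$. You should adopt that two-step structure (or else explicitly bound $\Phi_0(G_\arith)$) to close the argument.
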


\begin{proof}
(i) Let $G$ denote either of $G_\geom$ and $G_\arith$ when $n \geq 3$, and $G=G_\geom$ when $n=2$, 
and let $\Phi$ denote the representation of $G$ on 
$\sW(n,q) \oplus \overline{\Q_\ell}$. By Corollary \ref{inducedter} and Lemma \ref{galident},
$G$ embeds in $\sym_N$ for $N:=q^n-1$ in such a way
that $\Phi$ extends to the representation of $\sym_N$ on its natural permutation module $\C^N$
which we also denote by $\Phi$.

Fix a character $\theta \in \Char(q-1)$ of
order $q-1$ and let $\Phi_i$ denote the representation of $G$ on $\sF_{\theta^i}$. By Corollary \ref{induced} and Lemma \ref{galident}, 
$\Phi_0(G)$  embeds in $\sym_{N_0}$ for $N_0:=(q^n-1)/(q-1)=A$ in such a way
that $\Phi_0$ extends to the representation of $\sym_{N_0}$ on its deleted natural permutation module $\C^{N_0-1}$. By 
Lemma \ref{geomiso}, $\sF_{\triv}$ is geometrically isomorphic to the hypergeometric sheaf $\sH_\triv$, whence $G \geq G_\geom$ is 
irreducible in $\Phi_0$, and $\Phi_0(G)$ contains an element of order $N_0$ with simple spectrum (namely, the image of 
a generator of $I(0)$). Furthermore, $\Phi_0(G)$ contains a $p$-subgroup of order $A-B=q^{n-1}$ (namely, the image of $P(\infty)$).
If in addition $n=2$, then $\sH_\triv$ is the Gross $\PGL_2(q)$ local system considered in
\cite[\S13]{KT1}, and so $G_0 = \PGL_2(q)$ (acting on $q+1$ lines of $\F_q^2$). 
Next, for any $1 \leq i \leq q-2$, by Lemma \ref{geomiso}, $\sF_{\theta^i}$ is geometrically isomorphic to the hypergeometric sheaf 
$\sH_{\theta^i}$, whence $G \geq G_\geom$ is irreducible in $\Phi_i$ (which has degree $N_0$). Together with 
Theorem \ref{correcttracevalues}, this ensures that $(G,\Phi)$ fulfills all the conditions (a)--(c) of Theorem \ref{main-sl1} if $n \geq 3$.
Applying Theorem \ref{main-sl1} when $n \geq 3$, we obtain 
$$\SL_n(q) \lhd G_\geom \lhd G_\arith \leq \GL_n(q),$$
with the groups acting on $\sW(n,q) \oplus \overline{\Q_{\ell}}$ as in its natural permutation action on the set $\Omega$ of nonzero
vectors of the natural module $W=\F_q^n$. 

When $n=2$, we also apply Lemma \ref{geom-det} to see that the representations $\Phi_i$ of $G$ have distinct determinants and so are pairwise inequivalent for $0 \leq i \leq q-2$, and thus we have fulfilled all the conditions (a)--(c) of Theorem 
\ref{main-sl21}. Applying Theorem \ref{main-sl21}, we obtain $G_\geom=\GL_2(q)$, again with the group acting on $\sW(2,q) \oplus \overline{\Q_{\ell}}$ as in its natural permutation action on the set $\Omega$ of nonzero
vectors of the natural module $W=\F_q^2$. Now applying Lemma \ref{order} to $G=G_\arith$ we see that
$|G_\arith|=|\GL_2(q)|=|G_\geom|$, and so $G_\arith=G_\geom$.

\smallskip
(ii) It remains to show that $G_\geom=\GL_n(q)$ when $n \geq 3$. Here, $L:=\SL_n(q)$ is perfect, whence $\Phi_i(L)$ is trivial. 
Note that 
\begin{equation}\label{eq:d1}
  \GL_n(q) = \langle L,g \rangle,
\end{equation}
for a regular semisimple element $g$ of order $q^n-1$,
which acts on $\Omega$ cyclically and such that $z = g^{N_0}=\delta \cdot 1_W$ is a generator of $\ZB(\GL_n(q))$. 
{Indeed, we can identify $\F_q^n$ with $\F_{q^n}$ to embed $\GL_1(q^n)=\F_{q^n}^\times$ in $GL_n(q)$, and then take for $g$ a generator $\F_{q^n}^\times$.}
Hence, if $\zeta=\zeta_N \in \C^\times$ is a primitive $N^{\mathrm {th}}$ root of unity, then $\Phi(g)$ has simple spectrum, consisting of all powers of $\zeta$. Now $\Phi(z)$ admits all 
powers $\tilde\delta^j$ of $\tilde\delta=\zeta^{N_0}$, $0 \leq j \leq q-2$, as eigenvalues, each with multiplicity $N_0$. Hence the corresponding $z$-eigenspaces $V_j$ are invariant under $\Phi(\GL_n(q))$. By the definition of Weil representations
\cite[(1.1)]{T}, 
$V_0$ is the direct sum of the trivial representation and an irreducible representation, whose character over $L$ is $\tau^0_n$, whereas 
each $V_i$ with $i > 0$ is an irreducible representation, whose character over $L$ is $\tau^j_n$.
Thus the actions of $G_\geom$ on $V_0$, 
$V_1, \ldots,V_{q-2}$ are each equivalent to one of $\Phi_0 \oplus \triv$, and $\Phi_i$, $1 \leq i \leq q-2$.

We will choose $\theta$ so that 
$\theta(\det(g))=\tilde\delta$ and view $\theta$ as a linear character of $\GL_n(q)$ (trivial on $\SL_n(q)$). Then $g$ 
has determinant $(-1)^{A+1}\tilde\delta^j$ on $V_j$.  It follows that 
$\GL_n(q)$ has determinant $\lambda_j:=\theta^j(\chi_2)^{A+1}$ on $V_j$. 
Now letting $t:=[\GL_n(q):G_\geom]$, we have 
by \eqref{eq:d1} that $G_\geom=\langle L,g^t \rangle$ and that $t|(q-1)$. In particular, the image of the determinantal character of
$G_\geom$ has index divisible by $t$ in $\mu_{q-1} = \langle \tilde\delta \rangle < \C^\times$. 

On the other hand, choosing $\chi:=\theta$ if $2 \nmid A$, and $\chi:=\theta\chi_2$ of $2|A$, we see by Lemma \ref{geom-det}
that the determinant of $G_\geom$ on $\sH_\chi$ is exactly $\sL_{\theta}$, and so the determinantal image of $G_\geom$ on 
$\sH_\chi$ is the (full) image $\mu_{q-1}$ of $\theta$.
Applying Lemmas \ref{Htriv} and \ref{Hchi}, we see that the same 
is true for the determinantal image of $G_\geom$ on $\sF_\chi$. This can happen only when $t=1$. 
\end{proof}

\begin{cor}\label{main-sl2h}
Let $q$ be a power of a prime $p$, $n \geq 2$, $q \geq 4$ when $n =2$, and let $K$ be an extension of $\F_q$. 
Then, for any $\chi \in \Char(q-1)$, the geometric monodromy group $G_{\geom,\chi}$ of the hypergeometric sheaf $\sH_\chi$ over 
$\G_m/K$ is the image of $\GL_n(q)$ in one of its $q-1$ irreducible Weil representations, of degree $(q^n-1)/(q-1)-\delta_{\chi,\triv}$,
which are among the $q-1$ nontrivial irreducible constituents $\Phi_i$, $0 \leq i \leq q-2$, 
of the permutation action of $\GL_n(q)$ on the set of nonzero vectors of 
$\F_q^n$. In particular, $G_{\geom,\triv} \cong \PGL_n(q)$.
\end{cor}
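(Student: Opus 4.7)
The plan is to read this off immediately from Theorem \ref{main-sl2}, with the matching of summands controlled by Lemma \ref{geom-det}. First I would unpack $\sW(n,q) = \bigoplus_{\chi \in \Char(q-1)} \sF_\chi$ and use Lemma \ref{geomiso} to replace each $\sF_\chi$ by $\sH_\chi$ geometrically. Theorem \ref{main-sl2} then says that $G_\geom$ of $\sW(n,q) \oplus \overline{\Q_\ell}$ is $\GL_n(q)$ acting through its natural permutation representation on the set $\Omega$ of nonzero vectors of $\F_q^n$, so $G_{\geom,\chi}$ is the image of this $\GL_n(q)$ in the direct summand corresponding to $\sH_\chi$.

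The next step is to match each $\sH_\chi$ to one of the irreducible constituents recorded in the introduction, namely $W_\triv/\C \cdot 1_{V^\star}$ of dimension $A-1$ and the spaces $W_\chi$ of dimension $A$ for nontrivial $\chi$. The ranks of $\sH_\triv$ ($=A-1$) and of $\sH_\chi$ for $\chi \neq \triv$ ($=A$) match exactly, and the total count is $q-1$ summands on each side. To rule out any coincidence among these summands, I would invoke Lemma \ref{geom-det}: the geometric determinants $\det(\sH_\chi)$ (either $\sL_\chi$ or $\sL_{\chi\chi_2}$ depending on the parity of $A$) are pairwise distinct characters as $\chi$ ranges over $\Char(q-1)$. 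Hence the $\sH_\chi$ are pairwise non-isomorphic as $G_\geom$-representations, and they must therefore bijectively exhaust the $q-1$ irreducible Weil constituents $\Phi_i$ of the permutation representation.

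For the final assertion $G_{\geom,\triv} \cong \PGL_n(q)$, the relevant summand is $W_\triv/\C \cdot 1_{V^\star}$, the space of radial functions on $V^\star$ modulo constants. The subgroup of homotheties $\ZB(\GL_n(q)) = \F_q^\times$ fixes every radial function, so it acts trivially on this quotient; conversely, this scalar subgroup is exactly the kernel of $\GL_n(q)$ on that irreducible summand, as is standard. Therefore the image is $\GL_n(q)/\F_q^\times = \PGL_n(q)$.

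There is really no hard step here: the only potential pitfall is the uniqueness of the matching between the $\sH_\chi$ and the irreducible Weil representations of $\GL_n(q)$. Numerology (ranks alone) gives a bijection of the right cardinality but does not by itself rule out repeats, which is precisely what the determinant computation in Lemma \ref{geom-det} provides.
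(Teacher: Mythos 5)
Your proof is correct and follows essentially the same route as the paper: read the conclusion off from Theorem \ref{main-sl2} (via Lemma \ref{geomiso}), and identify the $\chi=\triv$ summand with the deleted permutation representation on lines, which factors through $\PGL_n(q)$. The determinant argument via Lemma \ref{geom-det} is valid but not actually needed for the statement as posed: once Theorem \ref{main-sl2} identifies $\sW(n,q)\oplus\overline{\Q_\ell}$ with the natural permutation representation on $\Omega$, the $q-1$ irreducible summands $\sF_\chi \cong \sH_\chi$ already match up with the $q-1$ nontrivial irreducible Weil constituents, and the paper's rank argument ($\sH_\triv$ being the unique summand of dimension $A-1$) handles the $\PGL_n(q)$ claim directly without appealing to the kernel-of-radial-functions description.
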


\begin{proof}
By Lemma \ref{geomiso}, $\sH_{\chi}$ is geometrically isomorphic to the summand $\sF_\chi$ of $\sW(n,q)$. Hence the first statement
follows by applying Theorem \ref{main-sl2}, as $G_{\geom,\chi}$ is now some $\Phi_i(G_\geom)$, $0 \leq i \leq q-2$. Among the
nontrivial irreducible constituents of the total Weil representation of $\GL_n(q)$, the deleted permutation action on the 
lines of $\F_q^n$ is the only one that has degree $(q^n-q)/(q-1) = \rank(\sH_\triv)$, and this representation factors through
$\PGL_n(q)$. Hence this representation must be realized by $\sH_\triv$, 
and the second statement follows.
\end{proof}

Recall that $\sW(n,q)$ on $\G_m/\F_q$ is arithmetically isomorphic to
the local system $F_\star\overline{\Q_\ell}/\overline{\Q_\ell}$, by Corollary \ref{inducedter}. Next we determine the arithmetic 
monodromy group of $F_\star\overline{\Q_\ell}$ on $\G_m/K$ for $K$ any subfield of $\F_q$.

\begin{thm}\label{main-sl3}
Let $q=p^f$ be a power of a prime $p$, $n \geq 2$, $q \geq 4$ when $n=2$, and let $K=\F_{q^{1/e}}$ be a subfield of $\F_q$ for $e|f$. 
Then for the arithmetic monodromy group $G_{\arith,K}$ of the local system $F_\star\overline{\Q_\ell}$ on $\G_m/K$ we have
$$G_{\arith,K} \cong \GL_n(q) \rtimes C_e \leq \GL_{ne}(K),$$
where the cyclic subgroup $C_e$ can be identified with $\Gal(\F_q/K)$, and with the groups acting on $F_\star\overline{\Q_\ell}$ as in its natural permutation action on the set $\Omega_K$ of nonzero
vectors of the natural module $K^{ne}$. 
\end{thm}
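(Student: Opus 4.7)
\textbf{Proof plan for Theorem \ref{main-sl3}.} The plan is to derive this from Theorem \ref{main-sl2} by controlling the constant-field extension. Since $F(T) = T^{q^{n-1}-1} - T^{q^n-1}$ lies in $\F_p[T]$, the sheaf $F_\star\overline{\Q_\ell}$ is defined on $\G_m/\F_p$, so standard arithmetic-monodromy theory gives $G_\geom \lhd G_{\arith,K}$ with cyclic quotient generated by the image of $\mathrm{Frob}_{|K|}$. Since $(\mathrm{Frob}_{|K|})^e = \mathrm{Frob}_q \in G_{\arith,\F_q} = G_\geom$ by Theorem \ref{main-sl2}, the order $|G_{\arith,K}/G_\geom|$ divides $e$.

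For the matching lower bound, I would exploit the $\F_q$-arithmetic decomposition $\sW(n,q) = \bigoplus_{\chi \in \Char(q-1)} \sF_\chi$, in which each $\sF_\chi$ is an absolutely irreducible $G_\geom$-summand. The Frobenius $\mathrm{Frob}_{|K|}$ permutes the set $\{\sF_\chi\}$ through the Galois action $\chi \mapsto \chi^{|K|}$ on $\Char(q-1)$; for $\chi$ of maximal order $q-1$, this orbit has length $e$, the multiplicative order of $|K|$ modulo $q-1$. Hence the image of $\mathrm{Frob}_{|K|}$ in $G_{\arith,K}/G_\geom$ has order at least $e$, forcing $|G_{\arith,K}/G_\geom| = e$. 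A consistency check is provided by Theorem \ref{correcttracevalues}(iii): for each prime $r \mid f$ it exhibits Frobenius elements whose trace is $p^{f/r^c} - 2$, a value not of the form $q^a - 2$ taken by elements of $G_\geom$ in the permutation representation on $\Omega = \F_q^n \setminus \{0\}$, confirming $G_{\arith,K_0} \supsetneq G_\geom$ at the relevant intermediate fields.

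To pin down the structure and embedding, I would form the natural model $\widetilde{H} := \GL_n(q) \rtimes \Gal(\F_q/K)$, with $\Gal(\F_q/K)$ acting componentwise on $\F_q^n$. Since Galois fixes $K$, this action is $K$-linear via the isomorphism $\F_q^n \cong K^{ne}$, so $\widetilde{H}$ embeds in $\GL_{ne}(K) \leq \sym_{\Omega_K}$, with $|\widetilde{H}| = |\GL_n(q)| \cdot e = |G_{\arith,K}|$, and the $\GL_n(q)$-factor of $\widetilde{H}$ coincides with $G_\geom$ by Theorem \ref{main-sl2}. The $C_e$-quotient of $G_{\arith,K}$ is identified with $\Gal(\F_q/K)$ because it arises as the Galois group of the constant-field extension in the Galois closure of $F(T) = u$ over $\F_p(u)$, and its induced outer action on $G_\geom = \Gal(L/\F_q(u)) = \GL_n(\F_q)$ is the standard Galois action on the coefficient field. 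The hardest step I anticipate is ruling out that this outer $C_e$-action is twisted by the graph (transpose-inverse) automorphism of $\GL_n(q)$, which for $n \geq 3$ is another outer automorphism realizable inside $\sym_{\Omega_K}$; this is resolved by the same character-orbit analysis (the permutation $\chi \mapsto \chi^{|K|}$ is intrinsically the field-automorphism action on $\Char(q-1)$) together with the determinantal identification in Lemma \ref{geom-det}, which leaves no room for a graph-automorphism admixture. Matching the common normal subgroup $G_\geom$ and the orders $|G_{\arith,K}| = |\widetilde{H}|$ inside $\sym_{\Omega_K}$ then yields $G_{\arith,K} = \widetilde{H}$, as asserted.
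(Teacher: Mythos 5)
Your proposal takes a genuinely different route from the paper for the lower bound $[G_{\arith,K}:G_\geom]\ge e$. The paper first computes $\NB_{\sym_N}(L)$ for $L=\SL_n(q)$ and shows it equals $\GL_n(q)\rtimes\Gal(\F_q/\F_p)$, then rules out proper containment $G_{\arith,\F_p}\lneq\NB_{\sym_N}(L)$ by observing that such containment would force every trace to be a power of $p^j$ for some $j>1$, contradicting Theorem \ref{correcttracevalues}(iii); you instead propose to look at the permutation of the geometric isotypic pieces $\sF_\chi$ under $\mathrm{Frob}_{|K|}$ and read off the index from the orbit length. Your approach is appealing and avoids the somewhat delicate point-counting behind Lemma \ref{222} and Theorem \ref{correcttracevalues}(iii), but the assertion that Frobenius permutes the $\sF_\chi$ by $\chi\mapsto\chi^{|K|}$ is stated without justification and needs a real argument (e.g., via the tame characters of $I(0)$ appearing in $\sH_\chi$, which are $\Char(A,\chi)$ and get raised to the $|K|$-th power under Frobenius conjugation, so the set $\Char(A,\chi)$ becomes $\Char(A,\chi^{|K|})$). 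Once you have that, the orbit of a generator of $\Char(q-1)$ has length exactly the multiplicative order of $p^{f/e}$ modulo $p^f-1$, which is $e$, and the lower bound follows.

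Two points where your structural identification is shakier than the paper's. First, you claim the graph (transpose-inverse) automorphism is ``realizable inside $\sym_{\Omega_K}$'' and must therefore be excluded by a determinantal argument. This is backwards: the paper shows the graph automorphism is \emph{not} realizable in $\NB_{\sym_N}(L)$ at all — any $h$ normalizing $L$ in its action on $\Omega$ induces an automorphism of the $L$-action on the set of $L$-orbits in $\Omega$, which is the $1$-space action, and the graph automorphism carries this to the hyperplane action, a non-isomorphic $L$-set for $n\ge 3$. So there is nothing to rule out, and the determinantal argument you invoke is not needed here (it is needed in the proof of Theorem \ref{main-sl2}(ii), a different step). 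Second, your closing ``matching the common normal subgroup and the orders'' step is not by itself a proof: two cyclic extensions of $\GL_n(q)$ of order $|\GL_n(q)|e$ inside $\sym_N$ need not coincide. What makes it work is precisely the paper's explicit computation $\NB_{\sym_N}(L)=\GL_n(q)\rtimes C_f$, after which $\NB_{\sym_N}(L)/\GL_n(q)\cong C_f$ is cyclic so the subgroup of order $e$ is unique; you should incorporate this computation rather than treat the identification as a formality.
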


\begin{proof}
In the case $e=1$ or $f=1$, the statement follows from Theorem \ref{main-sl2}. Next, $G_{\arith,K}$ is a normal subgroup of 
$G_{\arith,\F_p}$ with cyclic quotient of order dividing $f/e$, and  $G_{\arith,\F_q} = \GL_n(q)$ is a normal subgroup of 
$G_{\arith,K}$ with cyclic quotient of order dividing $e$. Hence it suffices to prove the statement for $e=f > 1$, that is when $K = \F_p$.

By Lemma \ref{galident}, $G:=G_{\arith,\F_p}$ embeds in $\sym_N$ for $N:=q^n-1$ in such a way
that the action of $G$ on $F_\star\overline{\Q_\ell}$ extends to the representation $\Phi$ of $\sym_N$ on its natural permutation module 
$\C^N$. Furthermore, $G$ contains the geometric monodromy group $G_{\geom} = G_{\arith,\F_q}=\GL_n(q)$ as a normal subgroup;
in particular, $L:=G^{(\infty)} \cong \SL_n(q)$, and 
$$L \lhd G \leq \NB_{\sym_N}(L).$$
Note that we can view $\F_q^n$ as $\F_p^{nf}$ and thus embed $L$ acting on $\Omega$ in $\GL_{nf}(p)$ acting on the set
$\Omega_{\F_p}$ of $N$ nonzero vectors of $\F_p^{nf}$. This embedding shows that
\begin{equation}\label{eq21}
  \NB_{\sym_N}(L) \geq \GL_n(q) \rtimes \Gal(\F_q/\F_p) \cong \GL_n(q) \rtimes C_f.
\end{equation}
We claim that in fact equality holds in \eqref{eq21}. Indeed, by Lemma \ref{cent-sl}, $\CB_{\sym_N}(L) < \GL_n(q)$.  Hence, if 
equality does not hold in \eqref{eq21}, then $n \geq 3$ and $\NB_{\sym_N}(L)$ contains an element $h$ that induces the transpose-inverse 
automorphism of $L$. On the other hand, the induced permutation action of $L$ on the orbit-sums over all $L$-orbits in 
$\Omega$ is just the action on $1$-spaces in $\F_q^n$, and this action must be be stabilized by $h$, which is impossible when $n \geq 3$
as $h$ sends the action on $1$-spaces to the action on hyperplanes. 

It remains to show that $G = \NB_{\sym_N}(L)$. Assume the contrary: $G$ has index $j > 1$ in $\NB_{\sym_N}(L)$. 
By the above results, we have $j|f$ and that 
$$G=\GL_n(q) \rtimes \Gal(\F_q/\F_{p^j}) = \GL_{nf/j}(p^j) \cap \NB_{\sym_N}(L).$$
Restricting $\Phi$ down to $G$ via $\GL_{nf/j}(p^j)$, we see that
\begin{equation}\label{eq22}
  \Tr(\Phi(x))+1 \mbox{ is a power of }p^j \mbox{ for all }x \in G.
\end{equation}   
Now we can find a prime divisor $r$ of $j$, and apply Theorem \ref{correcttracevalues}(iii) to get an element 
$g \in G$ with $\Tr(\Phi(g))+1 = p^{f_0}$, where $f_0$ is the $r'$-part of $f$. This certainly contradicts \eqref{eq22}.
\end{proof}

\begin{cor}\label{main-sl2s}
Let $q$ be a power of a prime $p$, $n \geq 2$, $q \geq 4$ when $n =2$, and let $K$ be an extension of $\F_q$. 
Then, for any divisor $d$ of $q-1$, the geometric 
monodromy group $G_{\geom,d}$ of the $[d]^\star$ Kummer pullback of the local system $\sW(n,q)$ on $\G_m/K$ is
the subgroup $\SL_n(q) \rtimes C_{(q-1)/d}$ of $\GL_n(q)$ {\rm (}with $ C_{(q-1)/d}$ being the cyclic group of diagonal matrices
$\diag(x,1,\ldots,1)$ where $x \in \mu_{(q-1)/d}$, or equivalently, is the subgroup of $\GL_n(q)$ on which $\det^{(q-1)/d}=1${\rm )}.
\end{cor}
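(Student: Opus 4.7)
The plan is to combine Theorem \ref{main-sl2} (which identifies $G_\geom(\sW(n,q)) = \GL_n(q)$) with the Galois correspondence for the Kummer cover $[d]$, and then pin down the resulting normal subgroup of $\GL_n(q)$ via the geometric determinants computed in Lemma \ref{geom-det}.

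First I would note that $[d]:\G_m\to\G_m$ is a finite \'etale Galois cover with cyclic Galois group $\mu_d$ (using $\gcd(d,p)=1$, which holds since $d\mid q-1$ and $q$ is a power of $p$). Accordingly, $G_{\geom,d}$ is a normal subgroup of $G_\geom=\GL_n(q)$ with cyclic quotient of order dividing $d$. Under our assumptions ($n\ge 2$, and $q\ge 4$ when $n=2$), $\SL_n(q)$ is perfect and equals $[\GL_n(q),\GL_n(q)]$, so the abelianness of $\GL_n(q)/G_{\geom,d}$ forces $G_{\geom,d}\supseteq\SL_n(q)$. Hence
\[
G_{\geom,d}=\{g\in\GL_n(q):\det(g)\in S\}
\]
for a unique subgroup $S\le\F_q^\times$, and the problem reduces to showing $S=\mu_{(q-1)/d}$.

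The inequality $|S|\ge(q-1)/d$ is automatic from $[\F_q^\times:S]=[\GL_n(q):G_{\geom,d}]\le d$. For the reverse inclusion, the idea is to exhibit a character of $\GL_n(q)$ of order exactly $d$ that becomes trivial on $G_{\geom,d}$. By Lemma \ref{geomiso} and Lemma \ref{geom-det}, the geometric determinant of each $\sF_\chi$ is $\sL_{\chi'}$, with $\chi'=\chi$ if $A$ is odd and $\chi'=\chi\chi_2$ if $A$ is even. One can always choose $\chi\in\Char(q-1)$ so that $\chi'$ has order exactly $d$: take $\chi$ of order $d$ in the odd case, and $\chi=\mu\chi_2$ with $\mu$ of order $d$ in the even case. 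Pulling back,
\[
[d]^\star\sL_{\chi'}\cong\sL_{(\chi')^d}
\]
is geometrically trivial. Translated back to group theory, the character $\det\circ\Phi_\chi=\chi'\circ\det$ of $\GL_n(q)$ (of order $d$) is trivial on $G_{\geom,d}$, forcing $\det(g)^{(q-1)/d}=1$, i.e.\ $\det(g)\in\mu_{(q-1)/d}$, for all $g\in G_{\geom,d}$. Thus $S\subseteq\mu_{(q-1)/d}$, and combined with the size bound this gives $S=\mu_{(q-1)/d}$.

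To conclude, I would identify $\{g\in\GL_n(q):\det(g)^{(q-1)/d}=1\}$ with $\SL_n(q)\rtimes C_{(q-1)/d}$ by order counting: both contain $\SL_n(q)$, and both surject onto $\mu_{(q-1)/d}$ under $\det$ (since $\det\diag(x,1,\ldots,1)=x$). I do not anticipate a serious obstacle: once $G_\geom=\GL_n(q)$ has been established in Theorem \ref{main-sl2}, everything reduces to standard Galois theory of covers together with Lemma \ref{geom-det}; the only mildly delicate step is the bookkeeping in the $A$-even case when selecting $\chi'$ of order exactly $d$, which is handled transparently by multiplication by $\chi_2$.
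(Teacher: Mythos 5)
Your argument is correct and rests on the same two ingredients as the paper's: the identification $G_\geom(\sW(n,q)) = \GL_n(q)$ from Theorem \ref{main-sl2}, and the determinant formula of Lemma \ref{geom-det}. The execution differs in one respect. The paper first pins down the extreme case $d=q-1$ (noting that some summand has geometric determinant of full order $q-1$, which becomes trivial after $[q-1]^\star$, forcing $G_{\geom,q-1}=\SL_n(q)$), and then handles general $d$ by the index sandwich $q-1=[G_{\geom,1}:G_{\geom,d}]\cdot[G_{\geom,d}:G_{\geom,q-1}]\le d\cdot(q-1)/d$. You instead treat each $d$ directly: once $\SL_n(q)\le G_{\geom,d}$ is secured from perfectness, you choose $\chi$ so that the determinant character $\chi'$ has order exactly $d$ (adjusting by $\chi_2$ when $A$ is even), observe that $[d]^\star\sL_{\chi'}$ is geometrically trivial, and conclude $\det(G_{\geom,d})\subseteq\ker(\chi')=\mu_{(q-1)/d}$, which together with the index bound $|S|\ge(q-1)/d$ gives the answer. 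Both arguments are sound; yours dispenses with the interpolation step at the small cost of a parity case split when selecting $\chi$, and is perhaps slightly more explicit about which character is doing the work for each $d$.
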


\begin{proof}
In the case $d=1$, the statement  holds by Theorem \ref{main-sl2}: $G_{\geom,1} = G_\geom=\GL_n(q)$.
Next we prove the statement for $d=q-1$. 
When we do any $[N]$ Kummer pullback, with $N$ prime to $p$,
the new $G_{\geom,N}$ after the pullback is a normal subgroup of the original
$G_\geom$, such that $G_\geom/G_{\geom,N}$ is cyclic of order dividing $N$.
[When $N|(q-1)$, $K \supseteq \F_q$ contains $\mu_N$, and so 
the same statement is also true for $G_{\arith}$.] In particular, 
$$G_{\geom,q-1} \geq [G_\geom,G_\geom] = \SL_n(q).$$
Furthermore, in the case of $\sW(n,q)$, by Lemma \ref{geom-det}, 
the geometric determinants of the individual summands $\sF_\chi$, which for some summands have full order $q-1$, 
all become trivial after the $[q-1]$ Kummer pullback. It follows that $G_{\geom,q-1}$ has full index $q-1$,
and so $G_{\geom,q-1}=\SL_n(q)$. 

For any divisor $d$ of $q-1$, 
$$[G_{\geom,1}:G_{\geom,d}] \leq d,~[G_{\geom,d}:G_{\geom,q-1}] \leq (q-1)/d.$$
Since $[G_{\geom,1}:G_{\geom,q-1}]=q-1$, equality must hold in both of these, and the statement follows for $d$.  
\end{proof}


\section{Relation to work of Abhyankar}
After the $[q^n -1]$ Kummer pullback, each of the $q-1$ summands $\sF_\chi$ 
of $\sW(n,q)$ becomes lisse on $\A^1/\F_q$, and the entire representation
$\sL_\triv \oplus \sW(n,q)$
is the local system on $\A^1/\F_q$ whose trace at time $v$ is the number of solutions of
$$T^{q^n} -v^{(q-1)q^{n-1}}T^{q^{n-1}}=T.$$
[This can be seen by taking our original equation $T^{q^n} -T^{q^{n-1}}=T/u$,
multiplying through by $u$, then
writing $u=v^{q^n-1}$ and writing the equation in terms of the new variable $vT$.]
Since this pullback is itself the pullback by $[(q^n-1)/(q-1)]$ of $[q-1]^\star \sW(n,q)$, whose $G_{\geom}$ is $\SL_n(q)$ by Corollary
\ref{main-sl2s}, we see that
this pullback continues to have $G_{\geom}=\SL_n(q)$ (since this $G_{\geom}$ is a normal subgroup of index dividing $(q^n-1)/(q-1)$
in $\SL_n(q)$, a group generated by its $p$-Sylow subgroups).

The iterated Frobenius pullback $[q^{n-1}]^\star$  (i.e. the power $q^{n-1}$ in the exponent of the variable $v$), does not alter either $G_{\geom}$ or $G_{\arith}$, so we can instead
look at the new local system on $\G_m/\F_q$, call it $\sA(n)$, whose trace at time $v$ is the number of solutions of 
$$T^{q^n} -v^{q-1}T^{q^{n-1}}=T,$$
and whose $G_{\geom}$ remains $\SL_n(q)$.

This new local system $\sA(n)$ is the $[q-1]$ Kummer pullback of the local system, call it $\sB(n)$, on $\A^1/\F_q$
whose trace at time $v$ is the number of solutions of 
$$T^{q^n} -vT^{q^{n-1}}=T.$$
Thus $G_{\geom,\sA(n)} (= \SL_n(q)$) is a normal subgroup of  $G_{\geom,\sB(n)}$ of index dividing $q-1$. But as $\sB(n)$ is lisse on
$\A^1/\F_q$, its $G_{\geom,\sB(n)}$ is generated by its $p$-Sylow subgroups, and hence has no nontrivial quotients of order dividing $q-1$. Thus  $G_{\geom,\sB(n)}=\SL_n(q)$ as well.
This in turn means that over the rational function field $\overline{\F_q}(v)$,
the Galois group of the equation $T^{q^n} -vT^{q^{n-1}}=T$,
or equivalently, of the equation
$$T^{q^n -1} -vT^{q^{n-1} -1}=1$$
is $\SL_n(q)$. Thus we have recovered case (i) of \cite[Theorem 1.2]{Abh}.


\end{document}